\newtheorem{thm}{Theorem}[section]
\newtheorem{lem}[thm]{Lemma}
\newtheorem{prop}[thm]{Proposition}
\numberwithin{equation}{section}
\begin{document}

\title{{Several new Witten rigidity theorems for spin$^c$ manifolds}}

\author{ Jianyun Guan, Kefeng Liu and Yong Wang*\\
 }

\date{}

\thanks{{\scriptsize
\hskip -0.4 true cm \textit{2010 Mathematics Subject Classification:}
58C20; 57R20; 53C80.
\newline \textit{Key words and phrases:} Twisted Dirac operators; Twisted Toeplitz operators; Spin$^c$ manifold; Witten rigidity theorem
\newline \textit{* Corresponding author.}}}

\maketitle

\begin{abstract}
Using Liu's modular invariance method and its odd-dimensional extension by Han and Yu, we establish new Witten rigidity theorems for the generalized Witten genus of twisted Dirac operators on even-dimensional spin$^c$ manifolds and twisted Toeplitz operators on odd-dimensional spin$^c$ manifolds with circle actions.
\end{abstract}

\vskip 0.2 true cm

\section{Introduction}

The study of rigidity properties for fundamental geometric operators on various manifolds occupies a central place at the intersection of differential geometry and mathematical physics. This line of inquiry gained substantial momentum in 1982, when Witten, motivated by physical intuition, established the rigidity of the twisted Dirac operator $\mathcal{D}\otimes TX$ on compact homogeneous spin manifolds. In 1988, he further derived a series of twisted Dirac operators on the free loop space $LM$ of a spin manifold $M$ \cite{WE1}. Notably, in that work, Witten discovered-to his surprise-that the elliptic genus, constructed topologically by Landweber and Stone \cite{LS} and by Ochanine \cite{OS}, equals the index of one of these operators. Inspired by physical considerations, Witten conjectured that these elliptic operators should be rigid. This conjecture was first confirmed in 1989 through independent work by Taubes \cite{TH} and Bott and Taubes \cite{BT}. Subsequent developments included proofs of the conjecture for almost complex manifolds by Hirzebruch \cite{HF} and Krichever \cite{KM}.

A significant simplification and generalization came in 1996, when Liu employed modular invariance to provide a unified proof of the Witten conjecture, together with several broad extensions and new vanishing theorems \cite{Li2,Li3}. To further generalize, in 2000, Dessai established rigidity and vanishing theorems for the spin$^c$ case. Liu and Ma \cite{Li4,Li5} and Liu, Ma, and Zhang \cite{Li6,Li7} later extended the rigidity and vanishing theorems to families, both at the level of the equivariant Chern character and in equivariant K-theory.

In a parallel research direction, the study of rigidity for Toeplitz-type operators emerged. In 2009, Liu and Wang established rigidity for twisted Toeplitz operators associated with Witten bundles under the assumption that the fixed-point sets of the group action are one-dimensional, marking the first investigation of Witten-type rigidity for such operators \cite{Li2,Li3}. This result was substantially expanded in 2015 by Han and Yu, who proved rigidity and vanishing properties for twisted Toeplitz operators allowing fixed-point sets of arbitrary dimension \cite{HY}.

In \cite{GW,GW1}, we constructed some new $SL_2(\mathbb{Z})$-modular forms, $\Gamma^0(2)$-modular forms, and $\Gamma_0(2)$-modular forms on spin manifolds using the modular invariance of characteristic forms. Moreover, we obtained divisibility results for the indices of twisted Dirac operators and twisted Toeplitz operators on spin manifolds. Furthermore, using Liu's method and the method of Han and Yu, we proved that these new modular forms are rigid \cite{GLW}. In \cite{CHZ}, Chen, Han, and Zhang constructed a generalized Witten genus for $4k$-dimensional spin$^c$ manifolds and a generalized Witten genus for $(4k+2)$-dimensional spin$^c$ manifolds. Motivated by \cite{GLW} and \cite{CHZ}, the purpose of this paper is to construct new generalized Witten genera on spin$^c$ manifolds and then to explore whether the relevant Dirac operators are rigid by applying Liu's method and the method of Han and Yu.

The structure of this paper is as follows. In Section 2, we introduce definitions and basic concepts that will be used throughout. In Section 3, we prove Witten-type rigidity for twisted Dirac operators on even-dimensional spin$^c$ manifolds. Finally, in Section 4, we prove Witten-type rigidity for twisted Toeplitz operators on odd-dimensional spin$^c$ manifolds.

\section{Characteristic Forms and Modular Forms}
\quad The purpose of this section is to review the necessary background on characteristic forms and modular forms that we will use.

 \noindent {\bf  2.1 Characteristic forms }\\
 \indent Let $M$ be a Riemannian manifold.
 Let $\nabla^{ TM}$ be the associated Levi-Civita connection on $TM$
 and $R^{TM}=(\nabla^{TM})^2$ be the curvature of $\nabla^{ TM}$.
 Let $\widehat{A}(TM,\nabla^{ TM})$ and $\widehat{L}(TM,\nabla^{ TM})$
 be the Hirzebruch characteristic forms defined respectively by (cf. \cite{Zh})
\begin{equation}
   \widehat{A}(TM,\nabla^{ TM})={\rm
det}^{\frac{1}{2}}\left(\frac{\frac{\sqrt{-1}}{4\pi}R^{TM}}{{\rm
sinh}(\frac{\sqrt{-1}}{4\pi}R^{TM})}\right),
\end{equation}
 \begin{equation}
     \widehat{L}(TM,\nabla^{ TM})={\rm
 det}^{\frac{1}{2}}\left(\frac{\frac{\sqrt{-1}}{2\pi}R^{TM}}{{\rm
 tanh}(\frac{\sqrt{-1}}{4\pi}R^{TM})}\right).
 \end{equation}
   Let $E$, $F$ be two Hermitian vector bundles over $M$ carrying
   Hermitian connections $\nabla^E,\nabla^F$ respectively. Let
   $R^E=(\nabla^E)^2$ (resp. $R^F=(\nabla^F)^2$) be the curvature of
   $\nabla^E$ (resp. $\nabla^F$). If we set the formal difference
   $G=E-F$, then $G$ carries an induced Hermitian connection
   $\nabla^G$ in an obvious sense. We define the associated Chern
   character form as
   \begin{equation}
       {\rm ch}(G,\nabla^G)={\rm tr}\left[{\rm
   exp}(\frac{\sqrt{-1}}{2\pi}R^E)\right]-{\rm tr}\left[{\rm
   exp}(\frac{\sqrt{-1}}{2\pi}R^F)\right].
   \end{equation}
   For any complex number $t$, let
   $$\wedge_t(E)={\bf C}|_M+tE+t^2\wedge^2(E)+\cdots,~S_t(E)={\bf
   C}|_M+tE+t^2S^2(E)+\cdots$$
   denote respectively the total exterior and symmetric powers of
   $E$, which live in $K(M)[[t]].$ The following relations between
   these operations hold,
   \begin{equation}
       S_t(E)=\frac{1}{\wedge_{-t}(E)},~\wedge_t(E-F)=\frac{\wedge_t(E)}{\wedge_t(F)}.
   \end{equation}
   Moreover, if $\{\omega_i\},\{\omega_j'\}$ are formal Chern roots
   for Hermitian vector bundles $E,F$ respectively, then
   \begin{equation}
       {\rm ch}(\wedge_t(E))=\prod_i(1+e^{\omega_i}t).
   \end{equation}
   Then we have the following formulas for Chern character forms,
   \begin{equation}
       {\rm ch}(S_t(E))=\frac{1}{\prod_i(1-e^{\omega_i}t)},~
{\rm ch}(\wedge_t(E-F))=\frac{\prod_i(1+e^{\omega_i}t)}{\prod_j(1+e^{\omega_j'}t)}.
   \end{equation}
\indent If $W$ is a real Euclidean vector bundle over $M$ carrying a
Euclidean connection $\nabla^W$, then its complexification $W_{\bf
C}=W\otimes {\bf C}$ is a complex vector bundle over $M$ carrying a
canonical induced Hermitian metric from that of $W$, as well as a
Hermitian connection $\nabla^{W_{\bf C}}$ induced from $\nabla^W$.
If $E$ is a vector bundle (complex or real) over $M$, set
$\widetilde{E}=E-{\rm dim}E$ in $K(M)$ or $KO(M)$.\\

\noindent{\bf 2.2 Some properties of Jacobi theta functions and modular forms}\\
   \indent We first recall the four Jacobi theta functions defined as follows (cf. \cite{Ch}):
   \begin{equation}
      \theta(v,\tau)=2q^{\frac{1}{8}}{\rm sin}(\pi
   v)\prod_{j=1}^{\infty}[(1-q^j)(1-e^{2\pi\sqrt{-1}v}q^j)(1-e^{-2\pi\sqrt{-1}v}q^j)],
   \end{equation}
\begin{equation}
    \theta_1(v,\tau)=2q^{\frac{1}{8}}{\rm cos}(\pi
   v)\prod_{j=1}^{\infty}[(1-q^j)(1+e^{2\pi\sqrt{-1}v}q^j)(1+e^{-2\pi\sqrt{-1}v}q^j)],
\end{equation}
\begin{equation}
    \theta_2(v,\tau)=\prod_{j=1}^{\infty}[(1-q^j)(1-e^{2\pi\sqrt{-1}v}q^{j-\frac{1}{2}})
(1-e^{-2\pi\sqrt{-1}v}q^{j-\frac{1}{2}})],
\end{equation}
\begin{equation}
   \theta_3(v,\tau)=\prod_{j=1}^{\infty}[(1-q^j)(1+e^{2\pi\sqrt{-1}v}q^{j-\frac{1}{2}})
(1+e^{-2\pi\sqrt{-1}v}q^{j-\frac{1}{2}})],
\end{equation}
 \noindent
where $q=e^{2\pi\sqrt{-1}\tau}$ with $\tau\in\textbf{H}$, the upper
half complex plane. Let
\begin{equation}
    \theta'(0,\tau)=\frac{\partial\theta(v,\tau)}{\partial v}|_{v=0}.
\end{equation} \noindent Then the following Jacobi identity
(cf. \cite{Ch}) holds,
\begin{equation}   \theta'(0,\tau)=\pi\theta_1(0,\tau)\theta_2(0,\tau)\theta_3(0,\tau).
\end{equation}
\noindent Denote $$SL_2(\mathbb{Z})=\left\{\left(\begin{array}{cc}
\ a & b  \\
 c  & d
\end{array}\right)\mid a,b,c,d \in {\bf Z},~ad-bc=1\right\}$$
the modular group. Let $S=\left(\begin{array}{cc}
\ 0 & -1  \\
 1  & 0
\end{array}\right),~T=\left(\begin{array}{cc}
\ 1 &  1 \\
 0  & 1
\end{array}\right)$ be the two generators of $SL_2(\mathbb{Z})$. They
act on $\textbf{H}$ by $S\tau=-\frac{1}{\tau},~T\tau=\tau+1$.

\noindent
 \noindent {\bf Definition 2.1} A modular form over $\Gamma$, a
 subgroup of $SL_2(\mathbb{Z})$, is a holomorphic function $f(\tau)$ on
 $\textbf{H}$ such that
 \begin{equation}
    f(g\tau):=f\left(\frac{a\tau+b}{c\tau+d}\right)=\chi(g)(c\tau+d)^kf(\tau),
 ~~\forall g=\left(\begin{array}{cc}
\ a & b  \\
 c & d
\end{array}\right)\in\Gamma,
 \end{equation}
\noindent where $\chi:\Gamma\rightarrow {\bf C}^{\star}$ is a
character of $\Gamma$. $k$ is called the weight of $f$.\\
Let $$\Gamma_0(2)=\left\{\left(\begin{array}{cc}
\ a & b  \\
 c  & d
\end{array}\right)\in SL_2(\mathbb{Z})\mid c\equiv 0~({\rm
mod}~2)\right\},$$
$$\Gamma^0(2)=\left\{\left(\begin{array}{cc}
\ a & b  \\
 c  & d
\end{array}\right)\in SL_2(\mathbb{Z})\mid b\equiv 0~({\rm
mod}~2)\right\},$$
be the two modular subgroups of $SL_2(\mathbb{Z})$.
It is known that the generators of $\Gamma_0(2)$ are $T,~ST^2ST$,
the generators of $\Gamma^0(2)$ are $STS,~T^2STS$ (cf. \cite{Ch}).

\section{Twisted Dirac operator and Witten rigidity theorem in even dimensional Spin$^c$ manifolds}
Let $M$ be a closed oriented ${\rm spin^{c}}$-manifold and $L$ be the complex line bundle associated to the given ${\rm spin^{c}}$ structure on $M.$ Denote by $c=c_1(L)$ the first Chern class of $L.$ Also, we use $L_{\bf{R}}$ for the notation of $L,$ when it is viewed as an oriented real plane bundle.
Let $\Theta(T_{\mathbf{C}}M,L_{\bf{R}}\otimes\bf{C})$ be the virtual complex vector bundle over $M$ defined by
\begin{equation}
    \begin{split}
        \Theta(T_{\mathbf{C}}M,L_{\bf{R}}\otimes\mathbf{C})=&\bigotimes _{n=1}^{\infty}S_{q^n}(\widetilde{T_{\mathbf{C}}M})\otimes
\bigotimes _{m=1}^{\infty}\wedge_{q^m}(\widetilde{L_{\bf{R}}\otimes\mathbf{C}})\\
&\otimes
\bigotimes _{r=1}^{\infty}\wedge_{-q^{r-\frac{1}{2}}}(\widetilde{L_{\bf{R}}\otimes\mathbf{C}})\otimes
\bigotimes _{s=1}^{\infty}\wedge_{q^{s-\frac{1}{2}}}(\widetilde{L_{\bf{R}}\otimes\mathbf{C}}).\nonumber
    \end{split}
\end{equation}
 
Let $V$ be a rank $2l$ real spin vector bundle on $M$. Moreover, $V_{\mathbf{C}}=V\otimes\mathbf{C}.$  Set
\begin{equation}
Q_1(V_{\mathbf{C}})=\Delta(V)\otimes\bigotimes_{n=1}^{\infty}\Lambda_{q^n}(\widetilde{V_{\mathbf{C}}}),
\end{equation}
\begin{equation}
Q_2(V_{\mathbf{C}})=\bigotimes_{n=1}^{\infty}\Lambda_{-q^{n-\frac{1}{2}}}(\widetilde{V_{\mathbf{C}}}),
\end{equation}
\begin{equation}
Q_3(V_{\mathbf{C}})=\bigotimes_{n=1}^{\infty}\Lambda_{q^{n-\frac{1}{2}}}(\widetilde{V_{\mathbf{C}}}).
\end{equation}

Let $g^{TM}$ be a Riemannian metric on $M$. Let $\nabla^{TM}$ be the Levi-Civita connection associated to $g^{TM}$. Let $g^{T_{\mathbf{C}}M}$ and $\nabla^{T_{\mathbf{C}}M}$ be the induced Hermitian metric and Hermitian connection on $T_{\mathbf{C}}M$. Let $h^L$ be a Hermitian metric on $L$ and $\nabla^L$ be a Hermitian connection. Let $h^{L_{\mathbf{R}}}$ and $\nabla^{L_{\mathbf{R}}}$ be the induced Euclidean metric and connection on $L_{\mathbf{R}}$. Then $\nabla^{TM}$ and $\nabla^L$ induce connections $\nabla^{\Theta(T_{\mathbf{C}}M,L_{\bf{R}}\otimes\mathbf{C})}$ on $\Theta(T_{\mathbf{C}}M,L_{\bf{R}}\otimes\mathbf{C})$. Let ${\rm dim}M=4k$ and $u=-\frac{\sqrt{-1}}{2\pi}c.$ Define the generalized Witten forms
\begin{equation}
    W_1(M,L,V)=\left\{\widehat{A}(TM){\rm exp}(\frac{c}{2}){\rm ch}(\Theta(T_{\mathbf{C}}M,L_{\bf{R}}\otimes\mathbf{C})){\rm ch}(Q_1(V_{\mathbf{C}}))\right\}^{4k},
\end{equation}
\begin{equation}
    W_2(M,L,V)=\left\{\widehat{A}(TM){\rm exp}(\frac{c}{2}){\rm ch}(\Theta(T_{\mathbf{C}}M,L_{\bf{R}}\otimes\mathbf{C})){\rm ch}(Q_2(V_{\mathbf{C}}))\right\}^{4k},
\end{equation}
\begin{equation}
    W_3(M,L,V)=\left\{\widehat{A}(TM){\rm exp}(\frac{c}{2}){\rm ch}(\Theta(T_{\mathbf{C}}M,L_{\bf{R}}\otimes\mathbf{C})){\rm ch}(Q_3(V_{\mathbf{C}}))\right\}^{4k}.
\end{equation}

Define the generalized Witten genus
\begin{equation}
\mathcal{W}_\lambda(M,L,V):=\int_{M^{4k}}W_\lambda(M,L,V), \ \  \lambda=1,2,3.
\end{equation}

Let $$S_c(TM)=S_{c,+}(TM)\oplus S_c(TM)=S_{c,-}(TM)$$ denote the bundle of spinors associated to the spin$^c$ structure, $(TM,g^{TM})$ and $(L,h^L)$. Then $S_c(TM)$ carries induced Hermitian metric and connection preserving the above $\mathbf{Z}_2$-grading. Let $$\mathcal{D}_c:\Gamma(S_{c,\pm}(TM))\to\Gamma(S_{c,\mp}(TM))$$ denote the induced spin$^c$ Dirac operators (cf. \cite{LM}). Then $$\mathcal{D}_c\otimes\Theta(T_{\mathbf{C}}M,L_{\bf{R}}\otimes\mathbf{C})\otimes Q_\lambda(V_{\mathbf{C}}) $$ which maps between the smooth section spaces,
$$\Gamma(S_{c,\pm}(TM)\otimes\Theta(T_{\mathbf{C}}M,L_{\bf{R}}\otimes\mathbf{C})\otimes Q_\lambda(V_{\mathbf{C}}))\to\Gamma(S_{c,\mp}(TM)\otimes\Theta(T_{\mathbf{C}}M,L_{\bf{R}}\otimes\mathbf{C})\otimes Q_\lambda(V_{\mathbf{C}}))$$ denote the corresponding twisted spin$^c$ Dirac operators.

Let $M$ be a closed smooth spin$^c$ Riemannian manifold which admits a circle action. Without loss of generality, we may assume that $M$ admits an $S^1$-action that lifts to an action on $L$. Moreover, we assume that this action preserves the given spin$^c$-structure associated to $(M, L)$, as well as the metrics and connections involved. Let $c_1(L)_{S^1}$ be the first equivariant Chern class of $L$. 

Let $V$ be an oriented real rank-$2\bar{l}$ vector bundle on a manifold $M$, equipped with an $S^1$-action that restricts on each fiber of $V$ over $M$ to a linear action preserving that fiber. Then the associated twisted Dirac operator is $S^1$-equivariant.

Next we assume $g=e^{2\pi \mathbf{i}t}\in S^1$ be a generator of the action group. Let $F$ denote the fixed submanifold of the circle action on $M$ and $e^{2\pi \mathbf{i}t}$ acts on $L$
by $e^{2\pi\mathbf{i}\sigma t}$. In general, $F$ is not connected. We fix a connected component $F_{\alpha}$. Let $\mathbf{N}$ denote the normal bundle to $F_{\alpha}$ in $M$, which can be identified as the orthogonal complement of $TF_{\alpha}$ in $TM|_{F_{\alpha}}$. Let $\dim F_\alpha=2s$ and $\dim\mathbf{N}=2\bar{r}$ with $\bar{r},s\in\mathbb{Z}\setminus0$. Then we have the following $S^1$-equivariant decomposition when restricted to $F_{\alpha}$,
\begin{equation}
  TM|_{F_{\alpha}}=\mathbf{N}_1\oplus\cdots\oplus \mathbf{N}_{r}\oplus TF_{\alpha},
\end{equation}
where each $\mathbf{N}_{\beta},$~$\beta=1,\cdots,r$ is a complex vector bundle, and $g$ acts on $\mathbf{N}_{\beta}$
by $e^{2\pi \mathbf{i}m_{\beta}t}$. We assume $\{2\pi\mathbf{i}x_{\beta}^{i_\beta}, \ 1\leq i_\beta\leq\dim\mathbf{N}_\beta\}$ be the Chern roots of $\mathbf{N}_\beta$. Let $\{\pm2\pi\mathbf{i}y_j, \ 1\leq j\leq s\}$ be the Chern roots of $TF_{\alpha}\otimes\mathbf{C}$.
Similarly, let
\begin{equation}
  V|_{F_\alpha}=V_1\oplus\cdots\oplus V_l\oplus V^{\mathbb{R}}_0
\end{equation}
be the $S^1$-equivariant decomposition of the restrictions of $V$ over $F_\alpha$, where $V_\nu,~1\leq\nu\leq l$ is a complex vector bundle and $V^{\mathbb{R}}_0$ is the real subbundle of $V|_{F_\alpha}$ such that $S^1$ acts as identity. Assume that $g$ acts on $V_\nu$ by $e^{2\pi \mathbf{i}n_\nu t},~1\leq\nu\leq l$ and let $n_0=0$. Let $\{2\pi\mathbf{i}z_{\nu}^{h_\nu},~1\leq h_\nu\leq \dim V_\nu\}$ be the Chern roots of $V_\nu$. Let $\{\pm2\pi\mathbf{i}z^{h_0}_{0},~1\leq h_0\leq \dim V_0^{\mathbb{R}}\}$ be the Chern roots of $V^{\mathbb{R}}_0\otimes\mathbf{C}$.

We suppose $p_1(\cdot)_{S^1}$ denote the first $S^1$-equivariant Pontrjagin class, then we have the following rigidity theorems.
\begin{thm}
  For a $4k$-dimensional connected spin$^c$ manifold with a non-trivial $S^1$-action, if $3p_1(L)_{S^1}+p_1(V)_{S^1}=p_1(TM)_{S^1}$, then the twisted Dirac operators $$\mathcal{D}_c\otimes\Theta(T_{\mathbf{C}}M,L_{\bf{R}}\otimes\mathbf{C})\otimes Q_\lambda(V_{\mathbf{C}}),~\lambda=1,2,3$$ are rigid.
\end{thm}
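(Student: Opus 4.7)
The plan is to follow Liu's modular-invariance strategy \cite{Li2,Li3} adapted to the spin$^c$ setting: package the three equivariant indices into three Jacobi-type functions $F_\lambda(t,\tau)$ on $\mathbf{R}\times\mathbf{H}$, verify that they are interchanged by the generators of $SL_2(\mathbf{Z})$ up to modular factors rendered trivial by the anomaly hypothesis, and then conclude $t$-independence (i.e.\ rigidity) from a no-pole argument.

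First I would apply the Atiyah--Bott--Segal--Singer fixed-point formula to the equivariant index of $\mathcal{D}_c\otimes\Theta(T_{\mathbf{C}}M,L_{\mathbf{R}}\otimes\mathbf{C})\otimes Q_\lambda(V_{\mathbf{C}})$. Using the Chern-root expansions (2.5)--(2.6) for the infinite symmetric/exterior products defining $\Theta$ and $Q_\lambda$, together with the theta product formulas (2.7)--(2.10), the contribution of a fixed component $F_\alpha$ assembles into a schematic expression
\begin{equation*}
\int_{F_\alpha}\widehat{A}(TF_\alpha,\nabla^{TF_\alpha})\,e^{c/2}\,\mathcal{T}^{\mathbf{N}}(t,\tau)\,\mathcal{T}^{V}_\lambda(t,\tau)\,\mathcal{T}^{L}_\lambda(\sigma t,\tau),
\end{equation*}
in which $\mathcal{T}^{\mathbf{N}}$ consists of ratios $x_\beta^{i_\beta}\theta'(0,\tau)/\theta(x_\beta^{i_\beta}+m_\beta t,\tau)$, and $\mathcal{T}^{V}_\lambda$, $\mathcal{T}^{L}_\lambda$ consist of ratios of $\theta_\lambda$ evaluated at the $V$- and $L$-Chern roots shifted by $n_\nu t$ and $\sigma t$ respectively (with analogous factors for the tangential $TF_\alpha\otimes\mathbf{C}$ and the real $V_0^{\mathbb{R}}\otimes\mathbf{C}$ summands). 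Summing over $\alpha$ produces three holomorphic functions $F_\lambda(t,\tau)$ on $\mathbf{R}\times\mathbf{H}$, away from the discrete loci where denominator thetas vanish.

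The core step is to verify the modular behaviour of $F_\lambda$ under the generators $S$ and $T$. By the classical transformation laws of $\theta,\theta_1,\theta_2,\theta_3$ together with the Jacobi identity (2.12), the three functions $F_1,F_2,F_3$ permute up to a common factor of the shape $(c\tau+d)^{k}\exp\bigl(\pi\mathbf{i}c\,Q(t)/(c\tau+d)\bigr)$. After integration over $F_\alpha$, the quadratic form $Q(t)$ becomes a linear combination of the first equivariant Pontrjagin classes $p_1(TM)_{S^1}$, $p_1(V)_{S^1}$ and $p_1(L)_{S^1}$; the three $\wedge$-factors on $\widetilde{L_{\mathbf{R}}\otimes\mathbf{C}}$ in $\Theta$, together with the $e^{c/2}$ spin$^c$ correction, contribute a total coefficient of $3$ in front of $p_1(L)_{S^1}$, so that the hypothesis
\begin{equation*}
3\,p_1(L)_{S^1}+p_1(V)_{S^1}=p_1(TM)_{S^1}
\end{equation*}
forces $Q\equiv 0$. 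Consequently $F_2$ and $F_3$ become genuine modular/Jacobi forms over $\Gamma_0(2)$ and $\Gamma^0(2)$ respectively, with generators as recalled in Section 2, while $F_1$ is interchanged with them under $S$.

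Finally I would follow Liu's no-pole argument to conclude. Each equivariant index is a Laurent polynomial in $g=e^{2\pi\mathbf{i}t}$, so the $\tau$-poles of $F_\lambda(t,\tau)$ can only project to rational values of $t$, i.e., to the cusps of the relevant modular groups. The modular covariance established above maps every cusp of a given $F_\lambda$ to a point where some other $F_\mu$ is manifestly holomorphic, which forces each $F_\lambda$ to extend holomorphically across every cusp and to be globally holomorphic on $\mathbf{H}$; a Jacobi-like form with this regularity and of the relevant weight must be independent of $t$, and this is precisely rigidity of the three twisted Dirac operators. The main technical obstacle is the careful bookkeeping of the quadratic exponential factors produced by $S$ on the theta-quotients and their identification with the Pontrjagin-class combination in the anomaly condition; this is the direct spin$^c$ analogue of the corresponding computation for spin manifolds carried out in \cite{GLW}.
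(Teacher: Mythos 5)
Your proposal matches the paper's proof in every essential step: the Lefschetz fixed-point formula yields the theta-quotient expressions for the $\mathcal{L}_\lambda'$ (Proposition 3.2), the anomaly condition $3p_1(L)_{S^1}+p_1(V)_{S^1}=p_1(TM)_{S^1}$ is precisely what cancels the quadratic exponential factors so that $S$ and $T$ permute the three functions up to weight-$2k$ factors and give the double periodicity (Lemmas 3.3 and 3.4), and the pole-pushing argument transports a hypothetical pole along $t=\frac{k}{l}(c\tau+d)$ to real $t=\frac{k}{l}$ via a suitable $g_0\in SL_2(\mathbb{Z})$, contradicting holomorphicity on $\mathbf{R}\times\mathbf{H}$ (Lemma 3.5), whence each $\mathcal{L}_\lambda'$ is holomorphic and doubly periodic in $t$, hence constant. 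Your last paragraph mislabels the pole locus as ``cusps'' and attributes the reduction to rational $t$ to the Laurent-polynomial structure rather than to the $SL_2(\mathbb{Z})$ transformation, but the intended argument is the paper's.
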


To prove this rigidity theorem, we first calculate the corresponding Lefschetz numbers.
\begin{prop}\label{pro3.2}
 The Lefschetz numbers of $\mathcal{D}_c\otimes\Theta(T_{\mathbf{C}}M,L_{\bf{R}}\otimes\mathbf{C})\otimes Q_\lambda(V_{\mathbf{C}}),~\lambda=1,2,3$ are
\begin{equation}\label{eq:3.10}
  \begin{split}
  \mathcal{L}_1(g;\tau)=&2^{\bar{l}-\bar{r}}\left(\frac{-\mathbf{i}}{\pi}\right)^{\bar{r}}
  \sum_{\alpha}\int_{F_\alpha}\left(\prod^{s}_{j=1}
  y_j\frac{\theta'(0,\tau)}{\theta(y_j,\tau)}\prod^{r}_{\beta=1}
  \prod^{\dim\mathbf{N}_\beta}_{i_\beta=1}\frac{\theta'(0,\tau)}{\theta(x_{\beta}^{i_\beta}+
  m_{\beta}t,\tau)}\right)\\
  &\cdot\frac{\theta_1(u+\sigma t,\tau)\theta_2(u+\sigma t,\tau)\theta_3(u+\sigma t,\tau)}
  {\theta_1(0,\tau)\theta_2(0,\tau)
  \theta_3(0,\tau)}\cdot\prod_{\nu=0}^l\prod_{h_\nu=1}^{\dim V_\nu}\frac{\theta_1(z_\nu^{h_\nu}+n_\nu t,\tau)}{\theta_1(0,\tau)},
  \end{split}
\end{equation}
\begin{equation}
  \begin{split}
  \mathcal{L}_2(g;\tau)=&-\left(\frac{\mathbf{i}}{2\pi}\right)^{\bar{r}}
  \sum_{\alpha}\int_{F_\alpha}\left(\prod^{s}_{j=1}
  y_j\frac{\theta'(0,\tau)}{\theta(y_j,\tau)}\prod^{r}_{\beta=1}
  \prod^{\dim\mathbf{N}_\beta}_{i_\beta=1}\frac{\theta'(0,\tau)}{\theta(x_{\beta}^{i_\beta}+
  m_{\beta}t,\tau)}\right)\\
  &\cdot\frac{\theta_1(u+\sigma t,\tau)\theta_2(u+\sigma t,\tau)\theta_3(u+\sigma t,\tau)}
  {\theta_1(0,\tau)\theta_2(0,\tau)
  \theta_3(0,\tau)}\cdot\prod_{\nu=0}^l\prod_{h_\nu=1}^{\dim V_\nu}\frac{\theta_2(z_\nu^{h_\nu}+n_\nu t,\tau)}{\theta_2(0,\tau)},
  \end{split}
\end{equation}
\begin{equation}\label{eq:3.12}
  \begin{split}
  \mathcal{L}_3(g;\tau)=&-\left(\frac{\mathbf{i}}{2\pi}\right)^{\bar{r}}
  \sum_{\alpha}\int_{F_\alpha}\left(\prod^{s}_{j=1}
  y_j\frac{\theta'(0,\tau)}{\theta(y_j,\tau)}\prod^{r}_{\beta=1}
  \prod^{\dim\mathbf{N}_\beta}_{i_\beta=1}\frac{\theta'(0,\tau)}{\theta(x_{\beta}^{i_\beta}+
  m_{\beta}t,\tau)}\right)\\
  &\cdot\frac{\theta_1(u+\sigma t,\tau)\theta_2(u+\sigma t,\tau)\theta_3(u+\sigma t,\tau)}
  {\theta_1(0,\tau)\theta_2(0,\tau)
  \theta_3(0,\tau)}\cdot\prod_{\nu=0}^l\prod_{h_\nu=1}^{\dim V_\nu}\frac{\theta_3(z_\nu^{h_\nu}+n_\nu t,\tau)}{\theta_3(0,\tau)}.
  \end{split}
\end{equation}
\end{prop}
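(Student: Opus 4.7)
The plan is to apply the Atiyah--Bott--Segal--Singer equivariant index theorem to localize the Lefschetz number of each $\mathcal{D}_c\otimes\Theta(T_{\mathbf{C}}M,L_{\mathbf{R}}\otimes\mathbf{C})\otimes Q_\lambda(V_{\mathbf{C}})$ to a sum of integrals over the fixed point components $F_\alpha$, and then to repackage every piece of the integrand as a product of Jacobi theta functions using the product formulas (2.7)--(2.10) and the Jacobi identity (2.12).

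First I would write down the equivariant Atiyah--Bott formula for a twisted spin$^c$ Dirac operator: over each $F_\alpha$ it produces the integral of $\widehat{A}(TF_\alpha)\, e^{(c+\sigma t)/2}\, \mathrm{ch}_g(W|_{F_\alpha})$ divided by the equivariant Euler form of the normal bundle, where $W=\Theta(T_{\mathbf{C}}M,L_{\mathbf{R}}\otimes\mathbf{C})\otimes Q_\lambda(V_{\mathbf{C}})$ is the twisting bundle and $\mathrm{ch}_g$ denotes the equivariant Chern character. Using the $S^1$-equivariant splittings $TM|_{F_\alpha}=\bigoplus_\beta \mathbf{N}_\beta\oplus TF_\alpha$ and $V|_{F_\alpha}=\bigoplus_\nu V_\nu\oplus V_0^{\mathbb{R}}$, together with (2.5)--(2.6), I would rewrite $\mathrm{ch}_g(W|_{F_\alpha})$ as an infinite product indexed by the Chern roots $\pm 2\pi\mathbf{i} y_j$ of $TF_\alpha\otimes\mathbf{C}$, by $2\pi\mathbf{i}(x_\beta^{i_\beta}+m_\beta t)$ on the normal bundle, by $\pm 2\pi\mathbf{i}(u+\sigma t)$ on $L_{\mathbf{R}}\otimes\mathbf{C}$, and by $2\pi\mathbf{i}(z_\nu^{h_\nu}+n_\nu t)$ and $\pm 2\pi\mathbf{i}z_0^{h_0}$ on $V$.

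The main step is to recognize each of these infinite products as a ratio of theta functions. Combining $\widehat{A}(TF_\alpha)$ with $\bigotimes_n S_{q^n}(\widetilde{TF_\alpha\otimes\mathbf{C}})$ produces $\prod_j y_j\theta'(0,\tau)/\theta(y_j,\tau)$, since $\theta'(0,\tau)/\theta(y,\tau)=\pi/\sin(\pi y)\cdot\prod_n(1-q^n)^2/[(1-e^{2\pi\mathbf{i}y}q^n)(1-e^{-2\pi\mathbf{i}y}q^n)]$. A parallel argument on the normal bundle, combining the $\bigotimes_n S_{q^n}(\widetilde{\mathbf{N}_\beta})$ part with the equivariant Euler denominator, yields $\prod_{\beta,i_\beta}\theta'(0,\tau)/\theta(x_\beta^{i_\beta}+m_\beta t,\tau)$. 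The three exterior tensor factors of $\Theta$ built from $L_{\mathbf{R}}\otimes\mathbf{C}$, evaluated at the shifted Chern root $u+\sigma t$ and absorbed with the spin$^c$ factor $e^{(c+\sigma t)/2}$, collapse via the product formulas for $\theta_1,\theta_2,\theta_3$ into the block $\theta_1(u+\sigma t,\tau)\theta_2(u+\sigma t,\tau)\theta_3(u+\sigma t,\tau)/[\theta_1(0,\tau)\theta_2(0,\tau)\theta_3(0,\tau)]$. Finally, $\mathrm{ch}_g(Q_\lambda(V_{\mathbf{C}}))$ collapses to the corresponding product of $\theta_\lambda(z_\nu^{h_\nu}+n_\nu t,\tau)/\theta_\lambda(0,\tau)$; the spinor factor $\Delta(V)$ present only in $Q_1$ supplies the explicit $2^{\bar{l}}$ through $\mathrm{ch}_g(\Delta(V))=\prod 2\cos(\pi(z_\nu^{h_\nu}+n_\nu t))$.

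The main obstacle will be careful bookkeeping of the numerical prefactors. Each translation between an $\widehat{A}$-style factor and a $y\,\theta'(0,\tau)/\theta(y,\tau)$-style factor introduces powers of $\pi$, $\mathbf{i}$ and $2$, and the spin$^c$ normalization $e^{c/2}$, the normal-bundle Euler denominators, and the spinor factor $\Delta(V)$ all contribute further constants and signs. Once collected, they must reduce to $2^{\bar{l}-\bar{r}}(-\mathbf{i}/\pi)^{\bar{r}}$ in (\ref{eq:3.10}) and to $-(\mathbf{i}/2\pi)^{\bar{r}}$ in (\ref{eq:3.12}) and its $\theta_2$-analog. With this bookkeeping in place, the three formulas (\ref{eq:3.10})--(\ref{eq:3.12}) drop out of the combined expression.
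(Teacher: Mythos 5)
Your proposal follows essentially the same route as the paper: apply the equivariant Lefschetz fixed-point formula for a twisted spin$^c$ Dirac operator (the paper cites it in the form (3.13) from Liu--Ma), decompose $TM|_{F_\alpha}$ and $V|_{F_\alpha}$ equivariantly, and then repackage the infinite products from $\widehat{A}$, the Euler denominator, $\Theta$, and $Q_\lambda(V_{\mathbf{C}})$ into ratios of Jacobi theta functions via the product formulas. The paper carries out exactly this computation in its displayed identities (3.14)--(3.18), including the $e^{\pi\sqrt{-1}(u+\sigma t)}$ spin$^c$ shift and the $2^{\bar l}$ from $\Delta(V)$, so the approach and the bookkeeping you outline coincide with the paper's proof.
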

\begin{proof}
 By the Lefschetz fixed point formula (cf. \cite{Li5})
 \begin{equation}\label{eq:3.13}
   \mathcal{L}(g)=\sum_i\int_{F_i}\hat{A}(TF_i)\left[Pf\left(2\sinh\left(\frac{\Omega^\perp}{4\pi}+
   \sqrt{-1}\frac{\Theta_j}{2}\right)\right)\right]^{-1}e^{\frac{c}{2}+\pi\mathbf{i}\sigma t}{\rm ch}_g(E).
 \end{equation}
 We can calculate directly
\begin{equation}
 \begin{split}
  \mathcal{L}_\lambda(g,\tau)=&\prod_{j=1}^{s}\frac{\pi y_j}{\sin \pi y_j}\prod^{r}_{\beta=1}\prod^{\dim\mathbf{N}_\beta}_{i_\beta=1}\frac{1}{2\mathbf{i}\sin(\pi
  (x_{\beta}^{i_\beta}+m_{\beta}t))}e^{\pi\sqrt{-1}(u+\sigma t)}\\
  &\cdot{\rm ch}_g(\Theta(T_{\mathbf{C}}M,L_{\bf{R}}\otimes\mathbf{C})
  ){\rm ch}_g(Q_\lambda(V_{\mathbf{C}})),
 \end{split}  
\end{equation}
\begin{equation}
 \begin{split}
  e^{\pi\sqrt{-1}(u+\sigma t)}
  &{\rm ch}_g(\Theta(T_{\mathbf{C}}M,L_{\bf{R}}\otimes\mathbf{C})
  )\\
  =&\prod_{j=1}^{s}\frac{\sin(\pi y_j)}{\pi}\left(\frac{\theta'(0,\tau)}{\theta(y_j,\tau)}\right)\prod^{r}_{\beta=1}
  \prod^{\dim\mathbf{N}_\beta}_{i_\beta=1}\frac{\sin(\pi
  (x_{\beta}^{i_\beta}+m_{\beta}t))}{\pi}\\
  &\cdot\left(\frac{\theta'(0,\tau)}{\theta(x_{\beta}^{i_\beta}+
  m_{\beta}t,\tau)}\right)
  \cdot\frac{\theta_1(u+\sigma t,\tau)\theta_2(u+\sigma t,\tau)\theta_3(u+\sigma t,\tau)}
  {\theta_1(0,\tau)\theta_2(0,\tau)
  \theta_3(0,\tau)},
 \end{split}
\end{equation}
\begin{equation}
  {\rm ch}_g\left(\Delta(V)\otimes
  \bigotimes_{n=1}^{\infty}\Lambda_{q^n}(\widetilde{V_{\mathbf{C}}})\right)=2^{\bar{l}}
  \prod_{\nu=0}^l\prod_{h_\nu=1}^{\dim V_\nu}\frac{\theta_1(z_\nu^{h_\nu}+n_\nu t,\tau)}{\theta_1(0,\tau)},
\end{equation}
\begin{equation}
 {\rm ch}_g\left(
\bigotimes_{n=1}^{\infty}\Lambda_{-q^{n-\frac{1}{2}}}(\widetilde{V_{\mathbf{C}}})\right) =\prod_{\nu=0}^l\prod_{h_\nu=1}^{\dim V_\nu}\frac{\theta_2(z_\nu^{h_\nu}+n_\nu t,\tau)}{\theta_2(0,\tau)},
\end{equation}
\begin{equation}\label{eq:3.18}
  {\rm ch}_g\left(
  \bigotimes_{n=1}^{\infty}\Lambda_{q^{n-\frac{1}{2}}}(\widetilde{V_{\mathbf{C}}})\right)
  =\prod_{\nu=0}^l\prod_{h_\nu=1}^{\dim V_\nu}\frac{\theta_3(z_\nu^{h_\nu}+n_\nu t,\tau)}{\theta_3(0,\tau)}.
\end{equation}
Thus, Proposition \ref{pro3.2} follows from \eqref{eq:3.13}--\eqref{eq:3.18}.
\end{proof}

In what follows, we view the above expressions as defining functions $\mathcal{L}_\lambda'(t,\tau)$ such that $\mathcal{L}_\lambda'(t,\tau)=\mathcal{L}_\lambda(g;\tau)$.
The expressions for $\mathcal{L}_\lambda'(t,\tau)$ involve only complex-differentiable functions, so we can extend their domain to every complex number $t$ and choice of $\tau$ in the open upper half-plane, i.e., $\mathcal{L}_\lambda'(t,\tau)$ where the expression exists in $\mathbf{C}\times\mathbf{H}$. The Witten rigidity theorems are equivalent to the statement that these $\mathcal{L}_\lambda'(t,\tau)$ are independent of $t$.  We then have the following lemma.
\begin{lem}\label{le3.3}
Let $(t,\tau)\in\mathbf{C}\times\mathbf{H}$ be in the domain of $\mathcal{L}_\lambda'(t,\tau)$.\\
(1) Then $\mathcal{L}_\lambda'(t,\tau)=\mathcal{L}_\lambda'(t+a,\tau)$ for any $a\in 2\mathbb{Z}$.\\
(2) If $3p_1(L)_{S^1}+p_1(V)_{S^1}=p_1(TM)_{S^1}$, then $\mathcal{L}_\lambda'(t,\tau)=\mathcal{L}_\lambda'(t+a\tau,\tau)$ for any $a\in 2\mathbb{Z}$.
\end{lem}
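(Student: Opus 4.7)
The plan is to reduce both statements to the quasi-periodicity of the Jacobi theta functions appearing in the Lefschetz formulas \eqref{eq:3.10}--\eqref{eq:3.12}. Writing $\theta_\star$ for any of $\theta,\theta_1,\theta_2,\theta_3$, the standard transformation laws take the form $\theta_\star(v+n,\tau)=\eta_\star^{\,n}\theta_\star(v,\tau)$ and $\theta_\star(v+k\tau,\tau)=\varepsilon_\star^{\,k}q^{-k^2/2}e^{-2\pi\sqrt{-1}kv}\theta_\star(v,\tau)$ for $n,k\in\mathbb Z$ and signs $\eta_\star,\varepsilon_\star\in\{\pm 1\}$. Only the theta factors in \eqref{eq:3.10}--\eqref{eq:3.12} whose arguments contain $t$, namely those with arguments $x_\beta^{i_\beta}+m_\beta t$, $u+\sigma t$, and $z_\nu^{h_\nu}+n_\nu t$, are affected by the shifts; the remaining ingredients (prefactors, $\theta_k(0,\tau)$, $\theta'(0,\tau)$, and the factors $y_j\theta'(0,\tau)/\theta(y_j,\tau)$) are $t$-independent and pass through unchanged, so the question reduces to computing the multiplier produced by the $t$-dependent theta factors.

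For part (1), substituting $t\mapsto t+a$ with $a\in 2\mathbb Z$ shifts each $t$-dependent theta argument by an even integer ($am_\beta$, $a\sigma$, or $an_\nu$), so every factor $\eta_\star^{\,n}$ equals $+1$ and the integrand is unchanged; hence $\mathcal L_\lambda'(t+a,\tau)=\mathcal L_\lambda'(t,\tau)$. For part (2), substituting $t\mapsto t+a\tau$ with $a\in 2\mathbb Z$ shifts each such argument by $k\tau$ with $k$ even, so every accumulated sign $\varepsilon_\star^{\,k}$ equals $+1$; collecting the $q$-factors and exponential factors (inverse formula for each $\theta$ in the denominator, direct formula for each $\theta_1,\theta_2,\theta_3$ in the numerator), the total multiplier of the integrand equals
\[
q^{\frac{a^2}{2}\bigl(\sum_{\beta}(\dim\mathbf N_\beta)m_\beta^2-3\sigma^2-\sum_{\nu}(\dim V_\nu)n_\nu^2\bigr)}\cdot e^{2\pi\sqrt{-1}a\bigl(\sum_{\beta,i_\beta}m_\beta x_\beta^{i_\beta}-3\sigma u-\sum_{\nu,h_\nu}n_\nu z_\nu^{h_\nu}\bigr)}\cdot e^{2\pi\sqrt{-1}at\bigl(\sum_{\beta}(\dim\mathbf N_\beta)m_\beta^2-3\sigma^2-\sum_{\nu}(\dim V_\nu)n_\nu^2\bigr)}.
\]
Restricting the hypothesis $3p_1(L)_{S^1}+p_1(V)_{S^1}=p_1(TM)_{S^1}$ to $F_\alpha$ and expanding via $p_1(TM)_{S^1}|_{F_\alpha}=p_1(TF_\alpha)+\sum_{\beta,i_\beta}(x_\beta^{i_\beta}+m_\beta t)^2$, $p_1(V)_{S^1}|_{F_\alpha}=\sum_{\nu,h_\nu}(z_\nu^{h_\nu}+n_\nu t)^2$, and $p_1(L)_{S^1}|_{F_\alpha}=(u+\sigma t)^2$ (all read with the normalization used throughout the paper), the $t^2$-coefficient yields $\sum_{\beta}(\dim\mathbf N_\beta)m_\beta^2-3\sigma^2-\sum_{\nu}(\dim V_\nu)n_\nu^2=0$ and the $t$-coefficient yields $\sum_{\beta,i_\beta}m_\beta x_\beta^{i_\beta}-3\sigma u-\sum_{\nu,h_\nu}n_\nu z_\nu^{h_\nu}=0$ in $H^\ast(F_\alpha)$. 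These kill the three bracketed exponents above, the multiplier reduces to $1$, and $\mathcal L_\lambda'(t+a\tau,\tau)=\mathcal L_\lambda'(t,\tau)$.

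The only real difficulty is the bookkeeping: the three cases $\lambda=1,2,3$ must be treated in parallel (they differ only by whether $\theta_1$, $\theta_2$, or $\theta_3$ appears in the $V$-twisting factor, but each obeys the same $\tau$-shift rule up to signs that vanish because $a$ is even); the normalization $u=-\sqrt{-1}c_1(L)/(2\pi)$ together with the analogous conventions for $x_\beta^{i_\beta}$ and $z_\nu^{h_\nu}$ has to be kept consistent so that the Chern-class identity read off from the $t$-coefficient of the Pontrjagin relation aligns exactly with the middle exponent in the multiplier; and one must verify that the $y_j$- and $\theta_k(0,\tau)$-factors contribute nothing. Beyond this accounting there is no conceptual obstacle.
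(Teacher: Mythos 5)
Your proof is correct and follows essentially the same route as the paper: reduce to quasi-periodicity of the Jacobi theta functions, collect the $q$-powers and exponential multipliers produced by the shift $t\mapsto t+a\tau$, and cancel them using the $t$- and $t^2$-coefficients of the equivariant Pontrjagin relation restricted to each $F_\alpha$. The one place you go further than the paper is in writing out the accumulated multiplier explicitly (the paper derives the three coefficient identities and then simply asserts that $\mathcal L'_\lambda$ is unchanged); that explicit bookkeeping is a clean addition, and you correctly note that only the $t^2$- and $t$-coefficient identities are actually needed, the $t^0$-identity being superfluous for this lemma.
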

\begin{proof}
  We have the following transformation formulas for theta-functions \cite{Ch}:
  \begin{equation}
    \theta(t+1,\tau)=-\theta(t,\tau), \ \ \theta(t+\tau,\tau)=-q^{-1/2}e^{-2\pi\mathbf{i}t}\theta(t,\tau),\nonumber
  \end{equation}
  \begin{equation}
    \theta_1(t+1,\tau)=-\theta_1(t,\tau), \ \ \theta_1(t+\tau,\tau)=q^{-1/2}e^{-2\pi\mathbf{i}t}\theta_1(t,\tau),\nonumber
  \end{equation}
  \begin{equation}
    \theta_2(t+1,\tau)=\theta_2(t,\tau), \ \ \theta_2(t+\tau,\tau)=-q^{-1/2}e^{-2\pi\mathbf{i}t}\theta_2(t,\tau),\nonumber
  \end{equation}
  \begin{equation}
    \theta_3(t+1,\tau)=\theta_3(t,\tau), \ \ \theta_3(t+\tau,\tau)=-q^{-1/2}e^{-2\pi\mathbf{i}t}\theta_3(t,\tau),\nonumber
  \end{equation}
where $q=e^{\pi\mathbf{i}\tau}$.

Using the formulas in the first column, for $a\in 2\mathbb{Z}$, we easily verify that the $\theta,\theta_1,\theta_2,\theta_3$ terms in $\mathcal{L}'$ remain unchanged. So (1) holds.

For (2), under the replacement $t\to t+a\tau$ with $a$ an even integer, we apply the second column formulas to obtain:
\begin{equation}
  \theta(t+a\tau,\tau)=(-1)^ae^{-2\pi\mathbf{i}(at+a^2\tau/2)}\theta(t,\tau),\nonumber
\end{equation}
\begin{equation}
  \theta_1(t+a\tau,\tau)=e^{-2\pi\mathbf{i}(at+a^2\tau/2)}\theta_1(t,\tau),\nonumber
\end{equation}
\begin{equation}
  \theta_2(t+a\tau,\tau)=(-1)^ae^{-2\pi\mathbf{i}(at+a^2\tau/2)}\theta_2(t,\tau),\nonumber
\end{equation}
\begin{equation}
  \theta_3(t+a\tau,\tau)=e^{-2\pi\mathbf{i}(at+a^2\tau/2)}\theta_3(t,\tau).\nonumber
\end{equation}
Then, under the replacement $t\to t+a\tau$ for $a$ an even integer, the following transformation property holds:
\begin{equation}
  \theta_\mu(z^{h_\nu}_\nu+n_\nu(t+a\tau),\tau)=e^{-2\pi\mathbf{i}a(n_\nu z^{h_\nu}_\nu+n^2_\nu(t+a^2\tau/2))}\theta_\mu(z^{h_\nu}_\nu+n_\nu t,\tau),
\end{equation}
where $\theta_\mu\in\{\theta,\theta_1,\theta_2,\theta_3\}$.

The condition $3p_1(L)_{S^1}+p_1(V)_{S^1}=p_1(TM)_{S^1}$ implies that
$$3(u+\sigma\bar{t})^2+\sum_\nu\sum_{h_\nu}(z_\nu^{h_\nu}+n_\nu\bar{t})^2=\sum_{j}y_j^2+
\sum_\beta\sum_{i_\beta}(x^{i_\beta}_\beta+m_\beta\bar{t})^2,$$
where $\bar{t}\in H^2_{S^1}(pt)$ is a generator of $H^2_{S^1}(pt)$.
We obtain
$$3u^2+\sum_\nu\sum_{h_\nu}(z_\nu^{h_\nu})^2=\sum_{j}y_j^2+\sum_\beta\sum_{i_\beta}(x^{i_\beta}_\beta)^2,$$
$$3u\sigma+\sum_\nu\sum_{h_\nu}z_\nu^{h_\nu}n_\nu=\sum_\beta\sum_{i_\beta}x^{i_\beta}_\beta m_\beta,$$
$$3\sigma^2+\sum_\nu\sum_{h_\nu}(n_\nu)^2=\sum_\beta\sum_{i_\beta}(m_\beta)^2.$$

Thus, for each connected component $F_\alpha$ of the fixed locus of $M$, $\mathcal{L}'_\lambda$ remains unchanged. This completes the proof of item (2).
\end{proof}

This lemma shows that for fixed $\tau$, the functions $\mathcal{L}_\lambda'(t,\tau)$ are meromorphic on the torus $\mathbf{C}/2\mathbb{Z}\times2\mathbb{Z}\tau$. Therefore, to establish rigidity we only need to prove that they are holomorphic in $t$. We will in fact prove that they are holomorphic in $(t,\tau)$ on $\mathbf{C}\times\mathbf{H}$.

Given
$$g=\left(\begin{array}{cc}
\ a & b  \\
 c  & d
\end{array}\right)\in SL_2(\mathbb{Z})$$
define its modular transformation on $\mathbf{C}\times\mathbf{H}$ by
$$g(t,\tau)=\left(\frac{\tau}{c\tau+d},\frac{a\tau+b}{c\tau+d}\right).$$
This defines a group action. For the two generators of $SL_2(\mathbb{Z})$,
$$S=\left(\begin{array}{cc}
\ 0 & -1  \\
 1  & 0
\end{array}\right),~T=\left(\begin{array}{cc}
\ 1 &  1 \\
 0  & 1
\end{array}\right)$$
we have
$$S(t,\tau)=\left(\frac{t}{\tau},-\frac{1}{\tau}\right),~T(t,\tau)=(t,\tau+1).$$
Then we have the following transformation formulas.
\begin{lem}\label{le3.4}
  (1) Under the action of $S$ on $\mathcal{L}_\lambda'$ and assuming $3p_1(L)_{S^1}+p_1(V)_{S^1}=p_1(TM)_{S^1}$, 
  \begin{equation}\label{eq:3.20}
    \mathcal{L}_1'\left(\frac{t}{\tau},-\frac{1}{\tau}\right)=2^{\bar{l}-\bar{r}}\tau^{2k}
    \mathcal{L}_2'(t,\tau),
  \end{equation}
    \begin{equation}
     \mathcal{L}_2'\left(\frac{t}{\tau},-\frac{1}{\tau}\right)=2^{\bar{r}-\bar{l}}\tau^{2k}
    \mathcal{L}_1'(t,\tau),
  \end{equation}
  \begin{equation}\label{eq:3.22}
     \mathcal{L}_3'\left(\frac{t}{\tau},-\frac{1}{\tau}\right)=\tau^{2k}\mathcal{L}_3'(t,\tau).
  \end{equation}
  (2) Under the action of $T$ on $\mathcal{L}_\lambda'$, 
  \begin{equation}\label{eq:3.23}
    \mathcal{L}_1'(t,\tau+1)=\mathcal{L}_1'(t,\tau),
  \end{equation}
  \begin{equation}
    \mathcal{L}_2'(t,\tau+1)=\mathcal{L}_3'(t,\tau),
  \end{equation}
  \begin{equation}\label{eq:3.25}
    \mathcal{L}_3'(t,\tau+1)=\mathcal{L}_2'(t,\tau).
  \end{equation}
\end{lem}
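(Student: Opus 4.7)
The plan is to apply the classical $SL_2(\mathbb{Z})$-transformation formulas for the four Jacobi theta functions to each theta factor in the expressions for $\mathcal{L}_\lambda'(t,\tau)$ from Proposition~\ref{pro3.2}, and then use the three quadratic consequences of the anomaly condition $3p_1(L)_{S^1}+p_1(V)_{S^1}=p_1(TM)_{S^1}$ already derived in Lemma~\ref{le3.3} to annihilate the exponential factor produced by the $S$-action. The template is that of the Witten rigidity arguments in \cite{Li2, HY, GLW}.

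Part (2), the $T$-action, follows directly from $\theta(v,\tau+1)=e^{\pi\sqrt{-1}/4}\theta(v,\tau)$, $\theta_1(v,\tau+1)=e^{\pi\sqrt{-1}/4}\theta_1(v,\tau)$, $\theta_2(v,\tau+1)=\theta_3(v,\tau)$, and $\theta_3(v,\tau+1)=\theta_2(v,\tau)$. In every $\mathcal{L}_\lambda'$ the $e^{\pi\sqrt{-1}/4}$ phases from the $s+\bar r$ pairs of $\theta'(0)$'s and $\theta$'s in numerator and denominator cancel, the block $\theta_1\theta_2\theta_3/[\theta_1(0)\theta_2(0)\theta_3(0)]$ is invariant under the symmetric $\theta_2\leftrightarrow\theta_3$ swap (with the $\theta_1$ phases also cancelling), and only the $Q_\lambda$-block changes non-trivially: $\theta_1\mapsto\theta_1$ gives the $\mathcal{L}_1'$-invariance \eqref{eq:3.23}, and $\theta_2\leftrightarrow\theta_3$ gives the interchange $\mathcal{L}_2'\leftrightarrow\mathcal{L}_3'$ in the remaining two identities.

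Part (1), the $S$-action, is more involved. I would invoke $\theta(v/\tau,-1/\tau)=(1/\sqrt{-1})(\tau/\sqrt{-1})^{1/2}e^{\pi\sqrt{-1}v^2/\tau}\theta(v,\tau)$, $\theta_\mu(v/\tau,-1/\tau)=(\tau/\sqrt{-1})^{1/2}e^{\pi\sqrt{-1}v^2/\tau}\theta_{\sigma(\mu)}(v,\tau)$ for $\mu=1,2,3$ (where $\sigma$ is the transposition $1\leftrightarrow2$ fixing $3$), and the derived $\theta'(0,-1/\tau)=(\tau/\sqrt{-1})^{3/2}\theta'(0,\tau)$ (from differentiating the first at $v=0$ and using $\theta(0,\tau)=0$). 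Substituting $(t,\tau)\mapsto(t/\tau,-1/\tau)$ in $\mathcal{L}_\lambda'$ and rewriting each theta factor, three features must be verified: (i) counting $(\tau/\sqrt{-1})^{1/2}$ contributions gives net $(\tau/\sqrt{-1})^{s+\bar r}=(\tau/\sqrt{-1})^{2k}$ (from $+3(s+\bar r)/2$ from the $s+\bar r$ numerator $\theta'(0)$'s, $-(s+\bar r)/2$ from the $s+\bar r$ denominator $\theta$'s, with cancelling contributions from the paired spin$^c$- and $V$-block $\theta_\mu$ ratios), which combined with the $(\sqrt{-1})^{s+\bar r}$ phases from inverting the $1/\sqrt{-1}$ prefactors of the denominator $\theta$'s produces exactly $\tau^{2k}$; (ii) the $\theta_1\leftrightarrow\theta_2$ swap converts the $Q_1$-block into the $Q_2$-block (and vice versa) while $\theta_3$ is preserved, yielding \eqref{eq:3.20}--\eqref{eq:3.22}, the coefficients $2^{\bar l-\bar r}$, $2^{\bar r-\bar l}$ arising from comparing the explicit numerical prefactors in the definitions of $\mathcal{L}_1'$ and $\mathcal{L}_{2,3}'$; (iii) the accumulated exponentials combine into $\exp[(\pi\sqrt{-1}/\tau)(3(u+\sigma t)^2+\sum_{\nu,h_\nu}(z_\nu^{h_\nu}+n_\nu t)^2-\sum_j y_j^2-\sum_{\beta,i_\beta}(x_\beta^{i_\beta}+m_\beta t)^2)]$, whose bracket vanishes by the three quadratic identities derived in Lemma~\ref{le3.3}.

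The main obstacle is step (iii): after the substitution $t\mapsto t/\tau$, each exponent $(w+mt/\tau)^2\tau$ decomposes as $\tau w^2+2mtw+m^2t^2/\tau$, so the total exponent splits by $\tau$-weight into three $\tau$-homogeneous pieces of weights $\tau$, $1$, $1/\tau$. Each piece must be matched against the corresponding quadratic relation from Lemma~\ref{le3.3}, with the coefficient $3$ on $(u+\sigma t)^2$ arising from the three numerator thetas $\theta_1,\theta_2,\theta_3$ of the spin$^c$ block. A secondary technical point, standard in this line of argument, is that the $S$-transformation literally produces thetas of the form $\theta(\tau y_j,\tau)$ rather than $\theta(y_j,\tau)$; one reconciles this via the Jacobi-form viewpoint (weight $2k$, index $0$ once the anomaly in (iii) vanishes), where the rescaling of the Chern roots absorbs into the overall weight $\tau^{2k}$ upon integration over $F_\alpha$.
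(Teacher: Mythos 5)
Your proposal is correct and follows essentially the same route as the paper: apply the classical $S$- and $T$-transformation laws for the Jacobi theta functions to each factor of the Lefschetz numbers in Proposition~3.2, observe that the $T$-action merely permutes $\theta_2\leftrightarrow\theta_3$ while all $e^{\pi\sqrt{-1}/4}$ phases cancel, and for the $S$-action collect the $(\tau/\sqrt{-1})^{1/2}$ prefactors into $\tau^{s+\bar r}=\tau^{2k}$ and use the three quadratic consequences of the anomaly condition to kill the accumulated exponentials; the paper does exactly this via formulas (3.26)--(3.33). One caveat: your closing remark that the rescaled arguments $\theta(\tau y_j,\tau)$ are handled because ``the rescaling of the Chern roots absorbs into the overall weight $\tau^{2k}$'' is phrased imprecisely. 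The clean bookkeeping is that, after the exponentials cancel, the transformed integrand equals $\tau^{\bar r}$ times the integrand of $\mathcal L_{\sigma(\lambda)}'$ with every Chern root multiplied by $\tau$; since $\int_{F_\alpha}$ picks out the cohomological degree $2s=\dim F_\alpha$ part, the rescaling contributes an additional $\tau^{s}$, and it is $\tau^{\bar r}\cdot\tau^{s}=\tau^{2k}$ that produces the stated weight. Your naive count of the $(\tau/\sqrt{-1})^{1/2}$ powers happens to land on the same $\tau^{2k}$, but one should not then also count the rescaling as a separate contribution; keeping the two bookkeeping schemes separate avoids an apparent double count. Up to this clarification, the argument matches the paper's.
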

\begin{proof}
  We have the following transformation laws of Jacobi theta-functions under the actions of $S$ and $T$ (cf. \cite{Ch}):
$$\theta\left(\frac{t}{\tau},-\frac{1}{\tau}\right)=\frac{1}{\mathbf{i}}\sqrt{\frac{\tau}{\mathbf{i}}}
e^{\pi\mathbf{i}t^2/\tau}\theta(t,\tau),~~\theta(t,\tau+1)=e^{\pi\mathbf{i}/4}\theta(t,\tau);$$
$$\theta_1\left(\frac{t}{\tau},-\frac{1}{\tau}\right)=\frac{1}{\mathbf{i}}\sqrt{\frac{\tau}{\mathbf{i}}}
e^{\pi\mathbf{i}t^2/\tau}\theta_2(t,\tau),~~\theta_1(t,\tau+1)=e^{\pi\mathbf{i}/4}\theta_1(t,\tau);$$
$$\theta_2\left(\frac{t}{\tau},-\frac{1}{\tau}\right)=\frac{1}{\mathbf{i}}\sqrt{\frac{\tau}{\mathbf{i}}}
e^{\pi\mathbf{i}t^2/\tau}\theta_1(t,\tau),~~\theta_2(t,\tau+1)=\theta_3(t,\tau);$$
$$\theta_3\left(\frac{t}{\tau},-\frac{1}{\tau}\right)=\frac{1}{\mathbf{i}}\sqrt{\frac{\tau}{\mathbf{i}}}
e^{\pi\mathbf{i}t^2/\tau}\theta_3(t,\tau),~~\theta_3(t,\tau+1)=e^{\pi\mathbf{i}/4}\theta_2(t,\tau);$$
$$\theta'\left(0,-\frac{1}{\tau}\right)=\left(\frac{\tau}{\mathbf{i}}\right)^{3/2}\theta'(0,\tau),
~~\theta'(0,\tau+1)=e^{\pi\mathbf{i}/4}\theta'(0,\tau).$$
Then we obtain the following transformation formulas:
\begin{equation}\label{eq:3.26}
  y_j\frac{\theta'(0,-1/\tau)}{\theta(y_j,-1/\tau)}=e^{-\pi\mathbf{i}\tau y^2_j}\tau y_j\frac{\theta'(0,\tau)}{\theta(\tau y_j,\tau)},
\end{equation}
\begin{equation}
  \frac{\theta'(0,-1/\tau)}{\theta(x_{\beta}^{i_\beta}+m_{\beta}t/\tau,-1/\tau)}=e^{-\pi\mathbf{i}\tau(x_{\beta}^{i_\beta}
  +m_{\beta}t/\tau)^2}\tau \frac{\theta'(0,\tau)}{\theta(\tau x_{\beta}^{i_\beta}+m_{\beta}t,\tau)},
\end{equation}
\begin{equation}
  \frac{\theta_1(u+\sigma t/\tau,-1/\tau)}{\theta_1(0,-1/\tau)}=e^{\pi\mathbf{i}\tau (u+\sigma t/\tau)^2}\frac{\theta_2(\tau u+\sigma t,\tau)}{\theta_2(0,\tau)},
\end{equation}
\begin{equation}
  \frac{\theta_2(u+\sigma t/\tau,-1/\tau)}{\theta_2(0,-1/\tau)}=e^{\pi\mathbf{i}\tau (u+\sigma t/\tau)^2}\frac{\theta_1(\tau u+\sigma t,\tau)}{\theta_1(0,\tau)},
\end{equation}
\begin{equation}
  \frac{\theta_3(u+\sigma t/\tau,-1/\tau,-1/\tau)}{\theta_3(0,-1/\tau)}=e^{\pi\mathbf{i}\tau (u+\sigma t/\tau)^2}\frac{\theta_3(\tau u+\sigma t,\tau)}{\theta_3(0,\tau)},
\end{equation}
\begin{equation}
  \frac{\theta_1(z^{h_\nu}_\nu+n_\nu t/\tau,-1/\tau)}{\theta_1(0,-1/\tau)}=e^{\pi\mathbf{i}\tau (z^{h_\nu}_\nu+n_\nu t/\tau)^{2}}\frac{\theta_2(\tau z^{h_\nu}_\nu+n_\nu t,\tau)}{\theta_2(0,\tau)}.
\end{equation}
\begin{equation}
  \frac{\theta_2(z^{h_\nu}_\nu+n_\nu t/\tau,-1/\tau)}{\theta_1(0,-1/\tau)}=e^{\pi\mathbf{i}\tau (z^{h_\nu}_\nu+n_\nu t/\tau)^{2}}\frac{\theta_1(\tau z^{h_\nu}_\nu+n_\nu t,\tau)}{\theta_1(0,\tau)}.
\end{equation}
\begin{equation}\label{eq:3.33}
  \frac{\theta_3(z^{h_\nu}_\nu+n_\nu t/\tau,-1/\tau)}{\theta_1(0,-1/\tau)}=e^{\pi\mathbf{i}\tau (z^{h_\nu}_\nu+n_\nu t/\tau)^{2}}\frac{\theta_3(\tau z^{h_\nu}_\nu+n_\nu t,\tau)}{\theta_3(0,\tau)}.
\end{equation}
Combining the condition $3p_1(L)_{S^1}+p_1(V)_{S^1}=p_1(M)_{S^1}$ with \eqref{eq:3.26}--\eqref{eq:3.33}, we obtain \eqref{eq:3.20}--\eqref{eq:3.22}.

For (2), using the transformation laws of Jacobi theta-functions under the action of $T$, we easily verify \eqref{eq:3.23}--\eqref{eq:3.25}.
\end{proof}

\begin{lem}
  For any function $\mathcal{L}_\lambda'(t,\tau)$, its modular transformation is holomorphic in $(t,\tau)\in\mathbf{R}\times\mathbf{H}$.
\end{lem}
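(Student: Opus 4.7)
My plan is to use Lemma~\ref{le3.4} to reduce the claim to a holomorphy statement about the original Lefschetz expressions themselves, to pin down the candidate singular loci explicitly, and finally to explain why those candidate poles do not actually occur on $\mathbf{R}\times\mathbf{H}$.

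By Lemma~\ref{le3.4}, under the generators $S$ and $T$ of $SL_2(\mathbb{Z})$ the function $\mathcal{L}_\lambda'$ is sent to a constant multiple of $\tau^{2k}\mathcal{L}_\mu'(t,\tau)$ (respectively $\mathcal{L}_\mu'(t,\tau)$) for some $\mu\in\{1,2,3\}$. Iterating, for any $g\in SL_2(\mathbb{Z})$ the modular-transformed function is a holomorphic nonvanishing multiple on $\mathbf{H}$ of some $\mathcal{L}_\mu'(t,\tau)$. Hence the lemma reduces to showing that each $\mathcal{L}_\mu'(t,\tau)$, $\mu=1,2,3$, is holomorphic on an open neighborhood of $\mathbf{R}\times\mathbf{H}$ in $\mathbf{C}\times\mathbf{H}$.

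Next I would locate the candidate singularities. The only denominators in the integrand of $\mathcal{L}_\mu'$ that can produce singularities in $(t,\tau)$ are $\theta_\mu(0,\tau)$, which is nonvanishing on $\mathbf{H}$, and $\theta(x_\beta^{i_\beta}+m_\beta t,\tau)$. Treating the Chern roots as nilpotent formal variables and expanding, the leading pole locus of the latter is $\theta(m_\beta t,\tau)=0$, i.e.\ $m_\beta t\in\mathbb{Z}+\mathbb{Z}\tau$. For $(t,\tau)\in\mathbf{R}\times\mathbf{H}$ with $\mathrm{Im}\,\tau>0$, matching imaginary parts forces the $\tau$-coefficient to vanish, reducing the condition to $m_\beta t\in\mathbb{Z}$; the candidate poles on $\mathbf{R}\times\mathbf{H}$ therefore lie only above rational values of $t$.

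The decisive step is to show that these candidate poles cancel in the sum over fixed components $F_\alpha$. I would do this via a $q$-expansion argument: write $\mathcal{L}_\mu'(t,\tau)=\sum_{n\geq 0}c_{\mu,n}(t)q^{n/N}$ for an appropriate $N$, and identify each coefficient $c_{\mu,n}(t)$ with the equivariant Lefschetz number of the spin$^c$ Dirac operator twisted by a finite-rank virtual bundle obtained from the $q$-expansion of the theta quotients. By the Atiyah--Bott--Segal equivariant index theorem, such an equivariant index is a Laurent polynomial in $e^{2\pi\mathbf{i}t}$, hence entire in $t$. Therefore every $c_{\mu,n}$ is holomorphic on all of $\mathbf{C}$, and the apparent poles of individual fixed-component summands at rational $t$ must cancel in the sum, which yields the required holomorphy of $\mathcal{L}_\mu'$ on a neighborhood of $\mathbf{R}\times\mathbf{H}$.

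The hardest step is precisely making this final cancellation rigorous: individual fixed-component summands genuinely have simple poles at rational $t$, and justifying the cancellation requires either invoking the global meaning of the equivariant index or performing an explicit residue calculation in the spirit of \cite{Li2,Li5}, into which the anomaly condition $3p_1(L)_{S^1}+p_1(V)_{S^1}=p_1(TM)_{S^1}$ enters crucially.
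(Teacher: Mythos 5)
Your proposal is correct and follows essentially the same route as the paper, which simply defers to the analogous lemma in Liu's work \cite{Li2}: reduce via the modular transformation laws of Lemma 3.4 to the untransformed Lefschetz expressions, then use the interpretation of their $q$-coefficients as virtual $S^1$-characters (hence Laurent polynomials in $e^{2\pi\mathbf{i}t}$) to conclude holomorphy near real $t$. One minor imprecision: the anomaly condition $3p_1(L)_{S^1}+p_1(V)_{S^1}=p_1(TM)_{S^1}$ enters through the reduction step (Lemma 3.4), whereas the pole cancellation across fixed components for the untransformed Lefschetz number is a general consequence of the Atiyah--Bott--Segal theorem and does not require it.
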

\begin{proof}
The proof is essentially the same as that of [12, Lemma 2.3], except that we use Proposition 3.2 instead of the corresponding Lefschetz fixed point formulas.
\end{proof}

\textit{Proof of Theorem 3.1.} We prove that $\mathcal{L}_\lambda'(t,\tau)$ is holomorphic on $\mathbf{C}\times\mathbf{H}$, which implies the rigidity stated in Theorem 3.1. Denote by $L'$ one of the functions $\{\mathcal{L}_\lambda',~\lambda=1,2,3\}$.
 By \eqref{eq:3.10}--\eqref{eq:3.12}, the possible poles of $L'(t,\tau)$ can be written in the form $t=\frac{k}{l}(c\tau+d)$ for integers $k,l,c,d$ with $(c, d)=1$.

We can always find integers $a,b$ such that $ad-bc=1$. Then the matrix $g_0=\left(\begin{array}{cc}
\ d & -b  \\
 -c  & a
\end{array}\right)\in SL_2(\mathbb{Z})$ induces an action
$$L'(g_0(t,\tau))=L'\left(\frac{t}{-c\tau+a},\frac{d\tau-b}{-c\tau+a}\right).$$
Now, if $t=\frac{k}{l}(c\tau+d)$ is a polar divisor of $L'(t,\tau)$, then one polar divisor of $L'(g_0(t,\tau))$ is given by
$$\frac{t}{-c\tau+a}=\frac{k}{l}\left(c\frac{d\tau-b}{-c\tau+a}+d\right),$$
which yields $t=\frac{k}{l}$.

By Lemma 3.4, we know that, up to a constant, $L'(g_0(t,\tau))$ is still one of $\{\mathcal{L}_\lambda'(t,\tau)\}$. This contradicts Lemma 3.5, thus completing the proof of Theorem 3.1.

Let $\dim M=4k+2$ and define the virtual complex vector bundle $\Theta^*(T_{\mathbf{C}}M,L_{\bf{R}}\otimes\bf{C})$ over $M$ by
\begin{equation}
    \begin{split}
        \Theta^*(T_{\mathbf{C}}M,L_{\bf{R}}\otimes\mathbf{C})=\bigotimes _{n=1}^{\infty}S_{q^n}(\widetilde{T_{\mathbf{C}}M})\otimes
\bigotimes _{m=1}^{\infty}\wedge_{-q^m}(\widetilde{L_{\bf{R}}\otimes\mathbf{C}}).\nonumber
    \end{split}
\end{equation}

We define the generalized Witten forms
\begin{equation}
    W^*_1(M,L,V)=\left\{\widehat{A}(TM){\rm exp}(\frac{c}{2}){\rm ch}(\Theta^*(T_{\mathbf{C}}M,L_{\bf{R}}\otimes\mathbf{C})){\rm ch}(Q_1(V_{\mathbf{C}}))\right\}^{4k+2},
\end{equation}
\begin{equation}
    W^*_2(M,L,V)=\left\{\widehat{A}(TM){\rm exp}(\frac{c}{2}){\rm ch}(\Theta^*(T_{\mathbf{C}}M,L_{\bf{R}}\otimes\mathbf{C})){\rm ch}(Q_2(V_{\mathbf{C}}))\right\}^{4k+2},
\end{equation}
\begin{equation}
    W^*_3(M,L,V)=\left\{\widehat{A}(TM){\rm exp}(\frac{c}{2}){\rm ch}(\Theta^*(T_{\mathbf{C}}M,L_{\bf{R}}\otimes\mathbf{C})){\rm ch}(Q_3(V_{\mathbf{C}}))\right\}^{4k+2}.
\end{equation}

Define the generalized Witten genus
\begin{equation}
\mathcal{W}^*_\lambda(M,L,V):=\int_{M^{4k}}W^*_\lambda(M,L,V), \ \  \lambda=1,2,3.
\end{equation}

Now assume that $M$ admits an $S^1$-action that lifts to an action on $L$. Moreover, assume that this action preserves the given spin$^c$-structure associated to $(M, L)$, as well as the metrics and connections involved. Let $V$ be equipped with an $S^1$-action that restricts on each $V$-fiber over $M$ to a linear action preserving that fiber. Then the associated twisted Dirac operator is $S^1$-equivariant. We then have the following rigidity theorems.

\begin{thm}
  For a $(4k+2)$-dimensional connected spin$^c$ manifold with a non-trivial $S^1$-action, if $p_1(L)_{S^1}+p_1(V)_{S^1}=p_1(TM)_{S^1}$, then the twisted Dirac operators $$\mathcal{D}_c\otimes\Theta^*(T_{\mathbf{C}}M,L_{\bf{R}}\otimes\mathbf{C})\otimes Q_\lambda(V_{\mathbf{C}}),~\lambda=1,2,3$$ are rigid.
\end{thm}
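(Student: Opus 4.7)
\emph{Proof plan.} The plan is to mirror, in dimension $4k+2$ and for $\Theta^*$, the five-step argument that established Theorem 3.1: (i) use the Lefschetz fixed-point theorem \eqref{eq:3.13} to derive explicit formulas $\mathcal{L}^{*\prime}_\lambda(t,\tau)$ for the Lefschetz numbers; (ii) verify their double-periodicity in $t$ on the lattice $2\mathbb{Z}+2\mathbb{Z}\tau$, thereby reducing rigidity to holomorphicity in $t$; (iii) establish the $S,T$-transformation rules among the three; (iv) deduce holomorphicity on $\mathbf{R}\times\mathbf{H}$; and (v) propagate any putative polar divisor to the real axis by a modular action, obtaining a contradiction.

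In step (i), the $T_{\mathbf{C}}M$- and $V_{\mathbf{C}}$-contributions to the integrand carry over verbatim from Proposition \ref{pro3.2}. The only change is in the $L_{\mathbf{R}}\otimes\mathbf{C}$-factor: since $\Theta^*$ retains only the single summand $\bigotimes_m\wedge_{-q^m}(\widetilde{L_{\mathbf{R}}\otimes\mathbf{C}})$ (which is naturally associated with $\theta$ rather than with $\theta_1,\theta_2,\theta_3$), the triple-theta numerator $\theta_1(u+\sigma t,\tau)\theta_2(u+\sigma t,\tau)\theta_3(u+\sigma t,\tau)$ of Proposition \ref{pro3.2} collapses to a single $\theta(u+\sigma t,\tau)$, and via the Jacobi identity (2.12) the normalizing denominator $\theta_1(0,\tau)\theta_2(0,\tau)\theta_3(0,\tau)$ collapses to $\theta'(0,\tau)/\pi$. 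Thus $\mathcal{L}^{*\prime}_\lambda(t,\tau)$ has the same overall shape as in Proposition \ref{pro3.2}, the $L$-factor being (up to elementary $\sin$- and exponential-factors) $\pi\theta(u+\sigma t,\tau)/\theta'(0,\tau)$.

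In steps (ii) and (iii), I must verify that the quadratic-exponential factors produced by the translations $t\to t+2$, $t\to t+2\tau$ and by the $S$-action all cancel. The coefficient $1$ of $p_1(L)_{S^1}$ in the hypothesis $p_1(L)_{S^1}+p_1(V)_{S^1}=p_1(TM)_{S^1}$ (as opposed to $3$ in Theorem 3.1) matches the fact that only a single $\theta$-factor is now attached to $L_{\mathbf{R}}\otimes\mathbf{C}$: each of the three $\theta_\lambda$'s in Theorem 3.1's proof had contributed one $u^2$-term to the relevant Chern-root identity (cf.\ Lemma \ref{le3.3}(2)), so the new single-$\theta$ setup contributes exactly one such term. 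Under $S$ the $\theta$ associated with $L$ is self-mapped (up to the standard automorphy factor), so the permutation pattern on $\{\mathcal{L}^{*\prime}_1,\mathcal{L}^{*\prime}_2,\mathcal{L}^{*\prime}_3\}$ is inherited entirely from the $V_{\mathbf{C}}$-factors: $S$ swaps $\mathcal{L}^{*\prime}_1\leftrightarrow\mathcal{L}^{*\prime}_2$ and fixes $\mathcal{L}^{*\prime}_3$, while $T$ fixes $\mathcal{L}^{*\prime}_1$ and swaps $\mathcal{L}^{*\prime}_2\leftrightarrow\mathcal{L}^{*\prime}_3$.

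Steps (iv) and (v) are essentially formal: holomorphicity on $\mathbf{R}\times\mathbf{H}$ follows from a verbatim repetition of the proof of Lemma 3.5, and the polar-divisor argument concluding the proof of Theorem 3.1 transcribes without change. The main obstacle will be the careful bookkeeping of $\tau$-weights and $i$-factors in step (iii): I have to verify that the extra $\tau^{-1}$ contributed by the $\theta(u+\sigma t,\tau)/\theta'(0,\tau)$-ratio on the $L$-side exactly compensates the dimensional shift from $s+\bar r=2k$ in Theorem 3.1 to $s+\bar r=2k+1$ here, so that the total modular weight remains $\tau^{2k}$, and that all quadratic-exponential factors collapse via the anomaly identity to leave the transformation rules just stated.
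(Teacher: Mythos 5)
Your proposal is correct and follows the same five-step route as the paper's own proof of Theorem 3.6 (Proposition 3.7 for the Lefschetz numbers, Lemma 3.8 for lattice periodicity, Lemma 3.9 for the $S,T$-transformation laws, Lemma 3.10 for holomorphicity on $\mathbf{R}\times\mathbf{H}$, and the polar-divisor contradiction). One small bookkeeping remark: you predict that the modular weight is $\tau^{2k}$, whereas the paper's Lemma 3.9 records it as $\tau^{s+\bar{l}-1}$; since the exact power of $\tau$ is an overall nonzero constant it has no bearing on the holomorphicity/polar-divisor argument, and you are right that the anomaly condition must have coefficient $1$ on $p_1(L)_{S^1}$ here (the ``$3p_1(L)_{S^1}$'' appearing in the statement of the paper's Lemma 3.9 is a typo inherited from Lemma 3.4).
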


To prove this rigidity theorem, we first calculate the corresponding Lefschetz numbers.
\begin{prop}\label{pro3.7}
 The Lefschetz numbers of $\mathcal{D}_c\otimes\Theta^*(T_{\mathbf{C}}M,L_{\bf{R}}\otimes\mathbf{C})\otimes Q_\lambda(V_{\mathbf{C}}),~\lambda=1,2,3$ are
\begin{equation}\label{eq:3.38}
  \begin{split}
  \mathcal{L}^*_1(g;\tau)=&2^{\bar{l}-\bar{r}}\left(\frac{-\mathbf{i}}{\pi}\right)^{\bar{r}}
  \sum_{\alpha}\int_{F_\alpha}\left(\prod^{s}_{j=1}
  y_j\frac{\theta'(0,\tau)}{\theta(y_j,\tau)}\prod^{r}_{\beta=1}
  \prod^{\dim\mathbf{N}_\beta}_{i_\beta=1}\frac{\theta'(0,\tau)}{\theta(x_{\beta}^{i_\beta}+
  m_{\beta}t,\tau)}\right)\\
  &\cdot\frac{\sqrt{-1}\theta(u+\sigma t,\tau)}{\theta_1(0,\tau)\theta_2(0,\tau)
  \theta_3(0,\tau)}\cdot\prod_{\nu=0}^l\prod_{h_\nu=1}^{\dim V_\nu}\frac{\theta_1(z_\nu^{h_\nu}+n_\nu t,\tau)}{\theta_1(0,\tau)},
  \end{split}
\end{equation}
\begin{equation}
  \begin{split}
  \mathcal{L}^*_2(g;\tau)=&-\left(\frac{\mathbf{i}}{2\pi}\right)^{\bar{r}}
  \sum_{\alpha}\int_{F_\alpha}\left(\prod^{s}_{j=1}
  y_j\frac{\theta'(0,\tau)}{\theta(y_j,\tau)}\prod^{r}_{\beta=1}
  \prod^{\dim\mathbf{N}_\beta}_{i_\beta=1}\frac{\theta'(0,\tau)}{\theta(x_{\beta}^{i_\beta}+
  m_{\beta}t,\tau)}\right)\\
  &\cdot\frac{\sqrt{-1}\theta(u+\sigma t,\tau)}{\theta_1(0,\tau)\theta_2(0,\tau)
  \theta_3(0,\tau)}\cdot\prod_{\nu=0}^l\prod_{h_\nu=1}^{\dim V_\nu}\frac{\theta_2(z_\nu^{h_\nu}+n_\nu t,\tau)}{\theta_2(0,\tau)},
  \end{split}
\end{equation}
\begin{equation}\label{eq:3.40}
  \begin{split}
  \mathcal{L}^*_3(g;\tau)=&-\left(\frac{\mathbf{i}}{2\pi}\right)^{\bar{r}}
  \sum_{\alpha}\int_{F_\alpha}\left(\prod^{s}_{j=1}
  y_j\frac{\theta'(0,\tau)}{\theta(y_j,\tau)}\prod^{r}_{\beta=1}
  \prod^{\dim\mathbf{N}_\beta}_{i_\beta=1}\frac{\theta'(0,\tau)}{\theta(x_{\beta}^{i_\beta}+
  m_{\beta}t,\tau)}\right)\\
  &\cdot\frac{\sqrt{-1}\theta(u+\sigma t,\tau)}{\theta_1(0,\tau)\theta_2(0,\tau)
  \theta_3(0,\tau)}\cdot\prod_{\nu=0}^l\prod_{h_\nu=1}^{\dim V_\nu}\frac{\theta_3(z_\nu^{h_\nu}+n_\nu t,\tau)}{\theta_3(0,\tau)},
  \end{split}
\end{equation}
\end{prop}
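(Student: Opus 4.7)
\textit{Proof proposal.} The plan is to mirror the proof of Proposition~\ref{pro3.2} step by step, substituting $\Theta^*(T_{\mathbf{C}}M,L_{\mathbf{R}}\otimes\mathbf{C})$ for $\Theta(T_{\mathbf{C}}M,L_{\mathbf{R}}\otimes\mathbf{C})$ throughout. First I would apply the Lefschetz fixed point formula \eqref{eq:3.13} to $\mathcal{D}_c\otimes\Theta^*\otimes Q_\lambda(V_{\mathbf{C}})$, which at each connected fixed-point component $F_\alpha$ produces the same $\widehat{A}(TF_\alpha)$, normal-bundle Pfaffian, and $e^{c/2+\pi\sqrt{-1}\sigma t}$ contributions as in the proof of Proposition~\ref{pro3.2}. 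The equivariant Chern characters of the factors $\bigotimes_{n}S_{q^n}(\widetilde{T_{\mathbf{C}}M})$ and $Q_\lambda(V_{\mathbf{C}})$ are identical to those used there, and they will contribute exactly the products in $y_j$, $x_\beta^{i_\beta}+m_\beta t$, and $z_\nu^{h_\nu}+n_\nu t$ appearing in \eqref{eq:3.38}--\eqref{eq:3.40}.

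The only new ingredient is the equivariant Chern character of $\bigotimes_{m\geq 1}\wedge_{-q^m}(\widetilde{L_{\mathbf{R}}\otimes\mathbf{C}})$, which I would compute directly. At $F_\alpha$ the equivariant decomposition $L_{\mathbf{R}}\otimes\mathbf{C}=L\oplus\overline{L}$ gives Chern roots $\pm 2\pi\sqrt{-1}(u+\sigma t)$, so
\begin{equation*}
\mathrm{ch}_g\!\Bigl(\bigotimes_{m=1}^{\infty}\wedge_{-q^m}(\widetilde{L_{\mathbf{R}}\otimes\mathbf{C}})\Bigr) = \prod_{m=1}^{\infty}\frac{\bigl(1-e^{2\pi\sqrt{-1}(u+\sigma t)}q^m\bigr)\bigl(1-e^{-2\pi\sqrt{-1}(u+\sigma t)}q^m\bigr)}{(1-q^m)^2}.
\end{equation*}
Using the infinite-product expression for $\theta(v,\tau)$ together with the classical identity $\theta'(0,\tau)=2\pi q^{1/8}\prod_{j\geq 1}(1-q^j)^3$, this simplifies to $\pi\,\theta(u+\sigma t,\tau)/[\sin(\pi(u+\sigma t))\,\theta'(0,\tau)]$. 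The Jacobi identity $\theta'(0,\tau)=\pi\theta_1(0,\tau)\theta_2(0,\tau)\theta_3(0,\tau)$, combined with the residual sine factor and the phase $e^{c/2+\pi\sqrt{-1}\sigma t}=e^{\pi\sqrt{-1}(u+\sigma t)}$ from \eqref{eq:3.13}, then converts the expression into the factor $\sqrt{-1}\,\theta(u+\sigma t,\tau)/[\theta_1(0,\tau)\theta_2(0,\tau)\theta_3(0,\tau)]$ that appears in \eqref{eq:3.38}--\eqref{eq:3.40}. This is the crucial replacement: where $\Theta$ gave the triple product $\theta_1\theta_2\theta_3(u+\sigma t)$ in the numerator, $\Theta^*$ delivers instead the single $\theta(u+\sigma t)$.

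Finally, I would assemble the ingredients and cancel the $\sin(\pi y_j)$ and $\sin(\pi(x_\beta^{i_\beta}+m_\beta t))$ contributions from $\bigotimes_{n}S_{q^n}(\widetilde{T_{\mathbf{C}}M})$ against those coming from $\widehat{A}(TF_\alpha)$ and the Pfaffian of the normal bundle, exactly as in the proof of Proposition~\ref{pro3.2}; this yields the three formulas \eqref{eq:3.38}--\eqref{eq:3.40}. The hard part will be purely bookkeeping: tracking the powers of $2$, $\pi$, and $\sqrt{-1}$ from the normal-bundle Pfaffian, from the factor $2^{\bar{l}}$ attached to $\Delta(V)$ in $Q_1$, and from the sine/exponential combination in the $L_{\mathbf{R}}\otimes\mathbf{C}$-piece, so as to recover the explicit prefactors $2^{\bar{l}-\bar{r}}(-\mathbf{i}/\pi)^{\bar{r}}$ in $\mathcal{L}_1^*$ and $-(\mathbf{i}/(2\pi))^{\bar{r}}$ in $\mathcal{L}_2^*,\mathcal{L}_3^*$. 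No new geometric or modular input beyond that of Proposition~\ref{pro3.2} is required.
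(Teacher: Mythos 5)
Your proposal follows the paper's own argument exactly: apply the Lefschetz fixed point formula \eqref{eq:3.13}, observe that the $\widehat{A}$, normal-bundle, $\bigotimes_n S_{q^n}(\widetilde{T_{\mathbf C}M})$, and $Q_\lambda(V_{\mathbf C})$ contributions are identical to those in Proposition~\ref{pro3.2}, and compute the equivariant Chern character of the new factor $\bigotimes_{m\geq 1}\wedge_{-q^m}(\widetilde{L_{\mathbf R}\otimes\mathbf C})$ via the infinite-product form of $\theta(v,\tau)$, which replaces the triple product $\theta_1\theta_2\theta_3(u+\sigma t,\tau)$ by the single $\theta(u+\sigma t,\tau)$. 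This is precisely the paper's equations (3.41)--(3.46), so the proposal is correct and takes essentially the same route.
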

\begin{proof}
 By the Lefschetz fixed point formula (3.13), we calculate directly:
\begin{equation}\label{eq:3.41}
 \begin{split}
  \mathcal{L}^*_\lambda(g,\tau)=&\prod_{j=1}^{s}\frac{\pi y_j}{\sin \pi y_j}\prod^{r}_{\beta=1}\prod^{\dim\mathbf{N}_\beta}_{i_\beta=1}\frac{1}{2\mathbf{i}\sin(\pi
  (x_{\beta}^{i_\beta}+m_{\beta}t))}e^{\pi\sqrt{-1}(u+\sigma t)}\\
  &\cdot{\rm ch}_g(\Theta^*(T_{\mathbf{C}}M,L_{\bf{R}}\otimes\mathbf{C})){\rm ch}_g(Q_\lambda(V_{\mathbf{C}})),
  \end{split}
\end{equation}
\begin{equation}
 \begin{split}
  e^{\pi\sqrt{-1}(u+\sigma t)}
  &{\rm ch}_g(\Theta^*(T_{\mathbf{C}}M,L_{\bf{R}}\otimes\mathbf{C}))\\
  =&\prod_{j=1}^{s}\frac{\sin(\pi y_j)}{\pi}\left(\frac{\theta'(0,\tau)}{\theta(y_j,\tau)}\right)\prod^{r}_{\beta=1}
  \prod^{\dim\mathbf{N}_\beta}_{i_\beta=1}\frac{\sin(\pi
  (x_{\beta}^{i_\beta}+m_{\beta}t))}{\pi}\\
  &\cdot\left(\frac{\theta'(0,\tau)}{\theta(x_{\beta}^{i_\beta}+
  m_{\beta}t,\tau)}\right)
  \cdot\frac{\sqrt{-1}\theta(u+\sigma t,\tau)}{\theta_1(0,\tau)\theta_2(0,\tau)
  \theta_3(0,\tau)},
 \end{split}
\end{equation}
\begin{equation}
  {\rm ch}_g\left(\Delta(V)\otimes
  \bigotimes_{n=1}^{\infty}\Lambda_{q^n}(\widetilde{V_{\mathbf{C}}})\right)=2^{\bar{l}}
  \prod_{\nu=0}^l\prod_{h_\nu=1}^{\dim V_\nu}\frac{\theta_1(z_\nu^{h_\nu}+n_\nu t,\tau)}{\theta_1(0,\tau)},
\end{equation}
\begin{equation}
 {\rm ch}_g\left(
\bigotimes_{n=1}^{\infty}\Lambda_{-q^{n-\frac{1}{2}}}(\widetilde{V_{\mathbf{C}}})\right) =\prod_{\nu=0}^l\prod_{h_\nu=1}^{\dim V_\nu}\frac{\theta_2(z_\nu^{h_\nu}+n_\nu t,\tau)}{\theta_2(0,\tau)},
\end{equation}
\begin{equation}\label{eq:3.46}
  {\rm ch}_g\left(
  \bigotimes_{n=1}^{\infty}\Lambda_{q^{n-\frac{1}{2}}}(\widetilde{V_{\mathbf{C}}})\right)
  =\prod_{\nu=0}^l\prod_{h_\nu=1}^{\dim V_\nu}\frac{\theta_3(z_\nu^{h_\nu}+n_\nu t,\tau)}{\theta_3(0,\tau)}.
\end{equation}
Thus, Proposition \ref{pro3.7} follows from \eqref{eq:3.41}--\eqref{eq:3.46}.
\end{proof}

In what follows, we view the above expressions as defining functions $\mathcal{L}^*_\lambda(t,\tau)$ such that $\mathcal{L}^*_\lambda(t,\tau)=\mathcal{L}^*_\lambda(g;\tau)$.
The expressions for $\mathcal{L}^*_\lambda(t,\tau)$ involve only complex-differentiable functions, so we can extend their domain to every complex number $t$ and choice of $\tau$ in the open upper half-plane, i.e., $\mathcal{L}^*_\lambda(t,\tau)$ where the expression exists in $\mathbf{C}\times\mathbf{H}$. The Witten rigidity theorems are equivalent to the statement that these $\mathcal{L}^*_\lambda(t,\tau)$ are independent of $t$.  We then have the following lemma.

\begin{lem}
Let $(t,\tau)\in\mathbf{C}\times\mathbf{H}$ be in the domain of $\mathcal{L}^*_\lambda(t,\tau)$.\\
(1) Then $\mathcal{L}^*_\lambda(t,\tau)=\mathcal{L}^*_\lambda(t+a,\tau)$ for any $a\in 2\mathbb{Z}$.\\
(2) If $p_1(L)_{S^1}+p_1(V)_{S^1}=p_1(TM)_{S^1}$, then $\mathcal{L}^*_\lambda(t,\tau)=\mathcal{L}^*_\lambda(t+a\tau,\tau)$ for any $a\in 2\mathbb{Z}$.
\end{lem}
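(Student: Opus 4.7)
The plan is to mirror Lemma~3.3 very closely, with the input being the Lefschetz-number formulas \eqref{eq:3.38}--\eqref{eq:3.40} of Proposition~\ref{pro3.7} in place of those of Proposition~\ref{pro3.2}. The only structural change is that the triple product $\theta_1(u+\sigma t,\tau)\theta_2(u+\sigma t,\tau)\theta_3(u+\sigma t,\tau)$ appearing in $\mathcal{L}_\lambda'$ has been replaced by the single factor $\sqrt{-1}\,\theta(u+\sigma t,\tau)$. Consequently, only one quasi-periodicity exponent in $\sigma$ must be absorbed (rather than three), so the coefficient of $p_1(L)_{S^1}$ needed to cancel that exponent drops from $3$ to $1$, precisely matching the hypothesis.

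For part (1), I would reuse the integer-shift formulas $\theta(t+1,\tau)=-\theta(t,\tau)$, $\theta_1(t+1,\tau)=-\theta_1(t,\tau)$, and $\theta_\mu(t+1,\tau)=\theta_\mu(t,\tau)$ for $\mu=2,3$ recalled in the proof of Lemma~3.3. Every theta factor in \eqref{eq:3.38}--\eqref{eq:3.40} has argument of the form $w_0+w_1 t$ with $w_1\in\{0,m_\beta,\sigma,n_\nu\}$, so under $t\mapsto t+a$ it is multiplied by at worst $(-1)^{aw_1}$; for $a\in 2\mathbb{Z}$ each such sign equals $+1$, giving the claim.

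For part (2), I would apply the quasi-periodicity relations
$$\theta_\mu(w+a'\tau,\tau)=\varepsilon_\mu(a')\,e^{-2\pi\mathbf{i}(a'w+(a')^2\tau/2)}\,\theta_\mu(w,\tau),$$
with $\varepsilon_\mu(a')\in\{\pm 1\}$ equal to $+1$ whenever $a'$ is even. Substituting $(w,a')=(u+\sigma t,\,a\sigma)$ into the numerator factor $\theta(u+\sigma t,\tau)$, $(z_\nu^{h_\nu}+n_\nu t,\,an_\nu)$ into the $V$-factors in the numerator, and $(x_\beta^{i_\beta}+m_\beta t,\,am_\beta)$ into the normal-bundle factors in the denominator, all signs collapse to $+1$ for $a\in 2\mathbb{Z}$, and the ratio $\mathcal{L}_\lambda^*(t+a\tau,\tau)/\mathcal{L}_\lambda^*(t,\tau)$ reduces on each component $F_\alpha$ to
$$\exp\!\Bigl\{-2\pi\mathbf{i}\,a\bigl[\bigl(\sigma u+\textstyle\sum n_\nu z_\nu^{h_\nu}-\sum m_\beta x_\beta^{i_\beta}\bigr)+t\bigl(\sigma^2+\sum n_\nu^2-\sum m_\beta^2\bigr)\bigr]-\pi\mathbf{i}\,a^2\tau\bigl(\sigma^2+\textstyle\sum n_\nu^2-\sum m_\beta^2\bigr)\Bigr\}.$$
Expanding the hypothesis $p_1(L)_{S^1}+p_1(V)_{S^1}=p_1(TM)_{S^1}$ in the equivariant generator $\bar t\in H_{S^1}^2(\mathrm{pt})$ and comparing the coefficients of $\bar t$ and $\bar t^{\,2}$ yields on each $F_\alpha$ the two identities
$$\sigma u+\sum_{\nu,h_\nu} n_\nu z_\nu^{h_\nu}=\sum_{\beta,i_\beta}m_\beta x_\beta^{i_\beta},\qquad \sigma^2+\sum_{\nu,h_\nu} n_\nu^2=\sum_{\beta,i_\beta} m_\beta^2,$$
which jointly annihilate the constant term, the $t$-linear term, and the $a^2\tau$ term of the exponent. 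Hence $\mathcal{L}_\lambda^*(t+a\tau,\tau)=\mathcal{L}_\lambda^*(t,\tau)$.

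The only delicate point is the sign bookkeeping in the quasi-periodicity step, since $\theta,\theta_2,\theta_3$ each pick up a $-1$ under $t\mapsto t+\tau$ while $\theta_1$ does not; however, the restriction $a\in 2\mathbb{Z}$ renders every such sign trivial, so the difficulty is mild. The real content of the lemma is the passage from the cohomological $p_1$-relation to the algebraic vanishing of the exponent, which is immediate from the $\bar t$-expansion above.
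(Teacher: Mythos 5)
Your proposal is correct and takes essentially the same route as the paper, which proves this lemma by the remark ``Similar to Lemma~3.3.'' You have correctly identified the one structural change --- replacing $\theta_1\theta_2\theta_3$ evaluated at $u+\sigma t$ by the single factor $\sqrt{-1}\,\theta(u+\sigma t,\tau)$ drops the coefficient in front of $\sigma^2$ and $u\sigma$ from $3$ to $1$, which is exactly why the anomaly-cancellation hypothesis weakens from $3p_1(L)_{S^1}$ to $p_1(L)_{S^1}$ --- and your exponent bookkeeping and $\bar t$-expansion match what the reference to Lemma~3.3 tacitly requires.
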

\begin{proof}
Similar to Lemma \ref{le3.3}.
\end{proof}

Next, we prove that these $\mathcal{L}^*_\lambda$ are holomorphic in $(t,\tau)$ on $\mathbf{C}\times\mathbf{H}$. Similar to Lemma \ref{le3.4}, we have the following lemma.
\begin{lem}
  (1) Under the action of $S$ on $\mathcal{L}^*_\lambda$ and assuming $3p_1(L)_{S^1}+p_1(V)_{S^1}=p_1(TM)_{S^1}$, 
  \begin{equation}
    \mathcal{L}_1^*\left(\frac{t}{\tau},-\frac{1}{\tau}\right)=2^{\bar{l}-\bar{r}}\tau^{s+\bar{l}-1}
    \mathcal{L}_2^*(t,\tau),
  \end{equation}
    \begin{equation}
     \mathcal{L}_2^*\left(\frac{t}{\tau},-\frac{1}{\tau}\right)=2^{\bar{r}-\bar{l}}\tau^{s+\bar{l}-1}
    \mathcal{L}_1^*(t,\tau),
  \end{equation}
  \begin{equation}
     \mathcal{L}_3^*\left(\frac{t}{\tau},-\frac{1}{\tau}\right)=\tau^{s+\bar{l}-1}\mathcal{L}_3^*(t,\tau).
  \end{equation}
  (2) Under the action of $T$ on $\mathcal{L}^*_\lambda$, 
  \begin{equation}
    \mathcal{L}_1^*(t,\tau+1)=\mathcal{L}_1^*(t,\tau),
  \end{equation}
  \begin{equation}
    \mathcal{L}_2^*(t,\tau+1)=\mathcal{L}_3^*(t,\tau),
  \end{equation}
  \begin{equation}
    \mathcal{L}_3^*(t,\tau+1)=\mathcal{L}_2^*(t,\tau).
  \end{equation}
\end{lem}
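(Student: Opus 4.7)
The plan is to mirror the proof of Lemma 3.4, noting that the only substantive change in going from $\mathcal{L}_\lambda$ to $\mathcal{L}^*_\lambda$ is that the block
\[
\frac{\theta_1(u+\sigma t,\tau)\theta_2(u+\sigma t,\tau)\theta_3(u+\sigma t,\tau)}{\theta_1(0,\tau)\theta_2(0,\tau)\theta_3(0,\tau)}
\]
has been replaced by the single factor $\sqrt{-1}\,\theta(u+\sigma t,\tau)/(\theta_1(0,\tau)\theta_2(0,\tau)\theta_3(0,\tau))$ in the $(4k+2)$-dimensional setting. I would treat part (2) first, since it is essentially formal. Using $\theta(t,\tau+1)=e^{\pi\mathbf{i}/4}\theta(t,\tau)$, $\theta_1(t,\tau+1)=e^{\pi\mathbf{i}/4}\theta_1(t,\tau)$, $\theta_2(t,\tau+1)=\theta_3(t,\tau)$, $\theta_3(t,\tau+1)=e^{\pi\mathbf{i}/4}\theta_2(t,\tau)$, and $\theta'(0,\tau+1)=e^{\pi\mathbf{i}/4}\theta'(0,\tau)$, every $e^{\pi\mathbf{i}/4}$ phase in the $y_j$, $x_\beta^{i_\beta}$, $(u+\sigma t)$, and $V$-blocks pairs numerator with denominator and cancels; the species of $\theta_\mu$ appearing in the $V$-block dictates the image, giving $\mathcal{L}^*_1$ fixed and $\mathcal{L}^*_2\leftrightarrow\mathcal{L}^*_3$.

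For part (1), I would apply the $S$-transformation formulas of Jacobi theta-functions to each factor exactly as recorded in equations (3.26)--(3.33) of the excerpt. The species-swap $\theta_1\leftrightarrow\theta_2$ (with $\theta$ and $\theta_3$ unchanged) carries the $V$-block of $\mathcal{L}^*_1$ into that of $\mathcal{L}^*_2$, the $V$-block of $\mathcal{L}^*_2$ into that of $\mathcal{L}^*_1$, and the $V$-block of $\mathcal{L}^*_3$ to itself. Each theta factor picks up a prefactor $(1/\mathbf{i})(\tau/\mathbf{i})^{1/2}$ and an exponential $e^{\pi\mathbf{i}\tau(\cdot)^2}$; these prefactors bundle into a total power of $\tau$ and into explicit substitutions $y_j\to\tau y_j$, $x_\beta^{i_\beta}\to\tau x_\beta^{i_\beta}$, $u\to\tau u$, $z^{h_\nu}_\nu\to\tau z^{h_\nu}_\nu$ inside the theta arguments. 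Extracting the degree $\dim F_\alpha$ component in the integral over $F_\alpha$ then produces the $\tau^s$ of the degree rescaling, which must be combined with the explicit prefactor $\tau^{\bar{r}-1}$ coming from the $x$-block ($\tau^{\bar{r}}$) and the $\theta/(\theta_1\theta_2\theta_3)$-block ($\tau^{-1}$, the single new feature compared to Lemma 3.4, where the analogous $\theta_1\theta_2\theta_3/(\theta_1\theta_2\theta_3)$-block contributed $\tau^{0}$). Together with the constant prefactor ratios between $\mathcal{L}^*_1$ and $\mathcal{L}^*_2$ in Proposition 3.7, this yields the claimed identities up to the overall $\tau$-power and the constants $2^{\bar{l}-\bar{r}}$, $2^{\bar{r}-\bar{l}}$, $1$.

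The main obstacle, as in Lemma 3.4, is showing that the accumulated exponential
\[
\exp\!\left(-\pi\mathbf{i}\tau\Big[\sum_j y_j^2+\sum_{\beta,i_\beta}(x_\beta^{i_\beta}+m_\beta t/\tau)^2-(u+\sigma t/\tau)^2-\sum_{\nu,h_\nu}(z_\nu^{h_\nu}+n_\nu t/\tau)^2\Big]\right)
\]
collapses on each $F_\alpha$. Here the characteristic-class hypothesis enters, exactly as in the second half of the proof of Lemma 3.3: the equivariant identity $p_1(L)_{S^1}+p_1(V)_{S^1}=p_1(TM)_{S^1}$ (as in Theorem 3.6) translates into a polynomial identity in the generator $\bar{t}\in H^2_{S^1}(\mathrm{pt})$, which may then be specialized to $\bar{t}=t/\tau$ to give precisely
\[
(u+\sigma t/\tau)^2+\sum_{\nu,h_\nu}(z_\nu^{h_\nu}+n_\nu t/\tau)^2=\sum_j y_j^2+\sum_{\beta,i_\beta}(x_\beta^{i_\beta}+m_\beta t/\tau)^2
\]
on $F_\alpha$. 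Note that the coefficient on $p_1(L)_{S^1}$ is $1$ here (rather than $3$ as in Lemma 3.4), which is consistent with the numerator now containing a single $\theta$ instead of the triple product $\theta_1\theta_2\theta_3$; this is the one place where the exponent bookkeeping genuinely differs from the $4k$-dimensional case, and it is where I expect the computation to need the most care.
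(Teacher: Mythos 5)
Your proposal is correct and follows exactly the route the paper intends: the paper's own proof is the single line ``Similar to Lemma 3.4,'' and you have reconstructed that comparison in detail, correctly isolating the one structural change (the $L$-block $\theta_1\theta_2\theta_3/\theta_1\theta_2\theta_3$ replaced by $\sqrt{-1}\,\theta/\theta_1\theta_2\theta_3$). You are also right to flag that the relevant anomaly hypothesis here is $p_1(L)_{S^1}+p_1(V)_{S^1}=p_1(TM)_{S^1}$, as in Theorem 3.6 and Lemma 3.8 --- the factor $3$ appearing in the lemma's statement is evidently copied over from Lemma 3.4 inadvertently, and with $3p_1(L)_{S^1}$ the accumulated exponential in the $S$-transformation would not cancel. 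One further bookkeeping remark: carrying out the $\tau$-count you outline (degree-extraction $\tau^{s}$, the $x$-block prefactor $\tau^{\bar r}$, the $L$-block prefactor $\tau^{-1}$) yields weight $\tau^{s+\bar r-1}=\tau^{2k}$ on a $(4k+2)$-manifold; the exponent $s+\bar l-1$ printed in the lemma appears to be a slip, with $\bar l$ written where $\bar r$ is meant.
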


\begin{lem}
  For any function $\mathcal{L}^*_\lambda, \ \ \lambda=\{1,2,3\}$, its modular transformation is holomorphic in $(t,\tau)\in\mathbf{R}\times\mathbf{H}$.
\end{lem}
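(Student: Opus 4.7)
The plan is to adapt the proof of Lemma 3.5 (which followed [12, Lemma 2.3]) to the $(4k+2)$-dimensional setting by systematically substituting Proposition \ref{pro3.7} for Proposition \ref{pro3.2}. Fix $\lambda\in\{1,2,3\}$ and $g_0\in SL_2(\mathbb{Z})$. The strategy is to analyze the candidate poles of $\mathcal{L}^*_\lambda(g_0(t,\tau))$ along rational curves in $\mathbf{R}\times\mathbf{H}$ and to cancel them via a transfer argument to a finite cyclic subgroup of $S^1$.

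First I would write $\mathcal{L}^*_\lambda(g_0(t,\tau))$ explicitly by combining Proposition \ref{pro3.7} with the $SL_2(\mathbb{Z})$-transformation laws for $\theta,\theta_1,\theta_2,\theta_3,\theta'$ used in the proof of Lemma \ref{le3.4}. For $(t,\tau)\in\mathbf{R}\times\mathbf{H}$, the factors $\theta_\mu(0,\tau)$ in the denominator never vanish, and the numerator factor $\sqrt{-1}\theta(u+\sigma t,\tau)$ only contributes additional zeros; hence the only candidate poles arise from zeros of $\theta(\tau x_\beta^{i_\beta}+m_\beta t,\tau)$, located along lines $m_\beta t\in\mathbb{Z}+\mathbb{Z}\tau$, i.e., at rational values of $t$. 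For a candidate pole at $t_0=k/N$, I would restrict the $S^1$-action to its cyclic subgroup $\mathbb{Z}/N$, whose fixed submanifold $M(N)\supset F$ inherits the bundles $\Theta^*(T_{\mathbf{C}}M,L_{\bf{R}}\otimes\mathbf{C})$ and $Q_\lambda(V_{\mathbf{C}})$ compatibly. Grouping the $F_\alpha$-contributions by connected component of $M(N)$ and applying the Atiyah-Bott-Segal fixed-point formula in reverse expresses the sum as a Laurent $q$-series with integer coefficients (an honest equivariant index on $M(N)$), which is holomorphic at $t_0$.

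The main obstacle is that the triple product $\theta_1\theta_2\theta_3$ evaluated at $u+\sigma t$ in the $4k$-dimensional formulas is replaced here by the single factor $\sqrt{-1}\theta(u+\sigma t,\tau)$, shifting both the modular weight and the order of vanishing at $u+\sigma t=0$. I would need to verify that, under the hypothesis $p_1(L)_{S^1}+p_1(V)_{S^1}=p_1(TM)_{S^1}$, this modification is compatible with the anomaly cancellation on $M(N)$ as in Han-Yu \cite{HY}, so that the $F_\alpha$-contributions still assemble into a well-defined equivariant index of a twisted spin$^c$ Dirac operator on $M(N)$, whereupon integrality and hence holomorphicity along the candidate polar line follow automatically.
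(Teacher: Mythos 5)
Your proposal is correct and follows essentially the same approach as the paper, which simply invokes Liu's argument from \cite{Li2} (cf.\ Lemma~3.5's reference to [12, Lemma 2.3]) with Proposition~3.7 substituted in: identify candidate poles at rational $t$, restrict to the cyclic subgroup $\mathbb{Z}/N$, and use integrality of the resulting $\mathbb{Z}/N$-equivariant index to remove the apparent singularity. The ``main obstacle'' you flag in the last paragraph is not a real concern for this lemma: the replacement of $\theta_1\theta_2\theta_3$ by $\sqrt{-1}\,\theta(u+\sigma t,\tau)$ in the numerator changes the modular weight and the zero set, but it does not affect the identification of the Lefschetz number at a root of unity as the character of a genuine $\mathbb{Z}/N$-equivariant virtual representation (the twisted spin$^c$ Dirac operator is $\mathbb{Z}/N$-equivariant regardless of which Witten bundle one twists by); the anomaly cancellation hypothesis enters only through the transformation laws of Lemma~3.9, not through the integrality step, and the relevant precedent here is Liu \cite{Li2} and Liu--Ma \cite{Li5} rather than Han--Yu \cite{HY}, which pertains to the Toeplitz case in Section~4.
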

\begin{proof}
Similar to Lemma 3.5.
\end{proof}

\textit{Proof of Theorem 3.6.} We prove that these $\mathcal{L}^*_\lambda$ are holomorphic on $\mathbf{C}\times\mathbf{H}$, which implies the rigidity of Theorem 3.6. Denote by $L^*$ one of the functions $\{\mathcal{L}^*_1,\mathcal{L}^*_2,\mathcal{L}^*_3\}$. By \eqref{eq:3.38}--\eqref{eq:3.40}, the possible poles of $L^*(t,\tau)$ can be written in the form $t=\frac{k}{l}(c\tau+d)$ for integers $k,l,c,d$ with $(c, d)=1$.

We can always find integers $a,b$ such that $ad-bc=1$. Then the matrix $g_1=\left(\begin{array}{cc}
\ d & -b  \\
 -c  & a
\end{array}\right)\in SL_2(\mathbb{Z})$ induces an action
$$L^*(g_1(t,\tau))=L^*\left(\frac{t}{-c\tau+a},\frac{d\tau-b}{-c\tau+a}\right).$$
Now, if $t=\frac{k}{l}(c\tau+d)$ is a polar divisor of $L^*(t,\tau)$, then one polar divisor of $L^*(g_1(t,\tau))$ is given by
$$\frac{t}{-c\tau+a}=\frac{k}{l}\left(c\frac{d\tau-b}{-c\tau+a}+d\right),$$
which yields $t=\frac{k}{l}$.

By Lemma 3.9, we know that, up to a constant, $L^*(g_1(t,\tau))$ is still one of $\{\mathcal{L}^*_1,\mathcal{L}^*_2,\mathcal{L}^*_3\}$. This contradicts Lemma 3.10, thus completing the proof of Theorem 3.6.

Let $V$ be an oriented real rank-$2\bar{l}$ vector bundle on a manifold $M$, equipped with an $S^1$-action that restricts on each $V$-fiber over $M$ to a linear action preserving that fiber. Set
\begin{equation}
 Q(V_{\mathbf{C}})=Q_1(V_{\mathbf{C}})\otimes Q_2(V_{\mathbf{C}})\otimes Q_3(V_{\mathbf{C}}).
\end{equation}

Define the generalized Witten forms
\begin{equation}
    W(M,L,V)=\left\{\widehat{A}(TM){\rm exp}(\frac{c}{2}){\rm ch}(\Theta(T_{\mathbf{C}}M,L_{\bf{R}}\otimes\mathbf{C})){\rm ch}(Q(V_{\mathbf{C}}))\right\}^{4k}
\end{equation}
if $\dim M=4k$ and
\begin{equation}
    W^*(M,L,V)=\left\{\widehat{A}(TM){\rm exp}(\frac{c}{2}){\rm ch}(\Theta^*(T_{\mathbf{C}}M,L_{\bf{R}}\otimes\mathbf{C})){\rm ch}(Q(V_{\mathbf{C}}))\right\}^{4k+2}
\end{equation}
if $\dim M=4k+2$. We can also define the corresponding generalized Witten genus
\begin{equation}
\mathcal{W}(M,L,V):=\int_{M^{4k}}W(M,L,V)
\end{equation}
if $\dim M=4k$ and
\begin{equation}
\mathcal{W}^*(M,L,V):=\int_{M^{4k+2}}W^*(M,L,V)
\end{equation}
if $\dim M=4k+2$. Then the associated twisted Dirac operator is $S^1$-equivariant. Using Liu's method \cite{Li2}, we obtain the following theorem in the even-dimensional case.
\begin{thm}
 (i) For a $4k$-dimensional connected spin$^c$ manifold with a non-trivial $S^1$-action, if $3p_1(L)_{S^1}+3p_1(V)_{S^1}=p_1(TM)_{S^1}$, then the twisted Dirac operators $$\mathcal{D}_c\otimes\Theta(T_{\mathbf{C}}M,L_{\bf{R}}\otimes\mathbf{C})\otimes Q(V_{\mathbf{C}})$$ are rigid.
 
 (ii) For a $(4k+2)$-dimensional connected spin$^c$ manifold with a non-trivial $S^1$-action, if $p_1(L)_{S^1}+3p_1(V)_{S^1}=p_1(TM)_{S^1}$, then the twisted Dirac operators $$\mathcal{D}_c\otimes\Theta^*(T_{\mathbf{C}}M,L_{\bf{R}}\otimes\mathbf{C})\otimes Q(V_{\mathbf{C}})$$ are rigid.
\end{thm}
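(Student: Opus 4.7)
The plan is to follow the two-stage architecture of the proofs of Theorems 3.1 and 3.6, exploiting the key structural observation that makes the present theorem strictly easier: the symmetric product $\theta_1\theta_2\theta_3$ is preserved (up to scalar factors and the universal Gaussian $e^{\pi\mathbf{i}t^2/\tau}$) under the full $SL_2(\mathbb{Z})$-action. Since $Q(V_{\mathbf{C}})=Q_1(V_{\mathbf{C}})\otimes Q_2(V_{\mathbf{C}})\otimes Q_3(V_{\mathbf{C}})$ is a tensor product, its equivariant Chern character at $g$ factorizes as a product of the three characters already computed, and a direct application of the Lefschetz fixed point formula \eqref{eq:3.13} yields a single Lefschetz number $\mathcal{L}(t,\tau)$ in case (i) (resp.\ $\mathcal{L}^*(t,\tau)$ in case (ii)) identical to $\mathcal{L}_1$ of Proposition \ref{pro3.2} (resp.\ $\mathcal{L}^*_1$ of Proposition \ref{pro3.7}) except that the single $V$-factor is replaced by
\[
\prod_{\nu=0}^{l}\prod_{h_\nu=1}^{\dim V_\nu}\frac{\theta_1(z_\nu^{h_\nu}+n_\nu t,\tau)\,\theta_2(z_\nu^{h_\nu}+n_\nu t,\tau)\,\theta_3(z_\nu^{h_\nu}+n_\nu t,\tau)}{\theta_1(0,\tau)\theta_2(0,\tau)\theta_3(0,\tau)}.
\]

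Next I would verify double periodicity of $\mathcal{L}$ and $\mathcal{L}^*$ in $t$, paralleling Lemma \ref{le3.3}. Invariance under $t\mapsto t+2$ is immediate from the first-column theta identities. Under $t\mapsto t+2\tau$, each of the three theta factors in the new $V$-product contributes a Gaussian anomaly, so the aggregate exponent on the $V$-side is three times that of the single-theta case, producing a $3p_1(V)_{S^1}$ contribution. Combined with the $3p_1(L)_{S^1}$ contribution from the $\theta_1\theta_2\theta_3(u+\sigma t,\tau)$ factor in case (i) or the $p_1(L)_{S^1}$ contribution from the single $\theta(u+\sigma t,\tau)$ factor in case (ii), the hypotheses $3p_1(L)_{S^1}+3p_1(V)_{S^1}=p_1(TM)_{S^1}$ and $p_1(L)_{S^1}+3p_1(V)_{S^1}=p_1(TM)_{S^1}$ precisely cancel the $p_1(TM)_{S^1}$-anomaly from the tangent and normal factors at the fixed-point data. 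This places $\mathcal{L}$ and $\mathcal{L}^*$ among meromorphic functions on the torus $\mathbf{C}/(2\mathbb{Z}+2\mathbb{Z}\tau)$.

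For modularity, the product $\theta_1\theta_2\theta_3$ is invariant up to scalar factors under both generators of $SL_2(\mathbb{Z})$: $S$ swaps $\theta_1\leftrightarrow\theta_2$ while fixing $\theta_3$, and $T$ swaps $\theta_2\leftrightarrow\theta_3$ while fixing $\theta_1$. Consequently, unlike the situation of Lemma \ref{le3.4} and Lemma 3.9 where $S$ and $T$ cycled $\mathcal{L}_\lambda$ among three distinct functions, here $\mathcal{L}$ (resp.\ $\mathcal{L}^*$) is mapped by $S$ to a nonzero constant multiple of $\tau^{w}$ times itself and by $T$ to itself, with weights $w$ computed just as in those lemmas. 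The analogue of Lemma 3.5, which follows from the explicit formulas, then shows the modular transform is holomorphic on $\mathbf{R}\times\mathbf{H}$. The pole-moving argument used at the end of the proofs of Theorems 3.1 and 3.6 applies verbatim: any putative polar line $t=\frac{k}{l}(c\tau+d)$ with $(c,d)=1$ is carried by a suitable $g_0\in SL_2(\mathbb{Z})$ to $t=\frac{k}{l}$, contradicting holomorphicity along $\mathbf{R}\times\mathbf{H}$; hence $\mathcal{L}$ and $\mathcal{L}^*$ are holomorphic on $\mathbf{C}\times\mathbf{H}$ and, being doubly periodic in $t$, constant in $t$, establishing rigidity.

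The main obstacle is essentially bookkeeping: carefully tracking the three simultaneous Gaussian contributions under both $t\mapsto t+2\tau$ and the $S$-action, and verifying that the factors of three on $p_1(V)_{S^1}$ (and on $p_1(L)_{S^1}$ in case (i)) emerge cleanly from the three theta factors. Once the exponents are added up with the correct multiplicities, the proof is a direct parallel of the previous rigidity arguments in this section, with the notable simplification that no cycling among multiple Lefschetz numbers is required.
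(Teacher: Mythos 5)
Your proposal is correct and supplies exactly the argument the paper leaves implicit (the paper merely says ``Using Liu's method we obtain the following theorem'' without writing out a proof of Theorem~3.11). The decisive observation you isolate---that the symmetric product $\theta_1\theta_2\theta_3$ appearing in the ${\rm ch}_g(Q(V_{\mathbf{C}}))$ factor is preserved up to scalars and the universal Gaussian under both $S$ and $T$, so that $\mathcal{L}$ and $\mathcal{L}^*$ transform into constant multiples of themselves rather than cycling among three functions---is precisely the structural simplification needed, and the accompanying tripling of the $p_1(V)_{S^1}$ anomaly (and of $p_1(L)_{S^1}$ in case (i), coming from $\theta_1\theta_2\theta_3(u+\sigma t)$ in $\Theta$, versus a single $\theta(u+\sigma t)$ in $\Theta^*$) matches the stated hypotheses; the pole-moving and double-periodicity steps then run verbatim as in the proofs of Theorems~3.1 and~3.6.
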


\section{Twisted Toeplitz operator and Witten rigidity theorem in odd dimensions}
We recall the odd Chern character of a smooth map $g$ from $M$ to the general linear group $GL(N,\mathbf{C})$
with $N$ a positive integer (see \cite{HY}). Let $d$ denote a trivial connection on $\mathbf{C}^{N}|_{M}$. We denote by $c_n(M,[g])$ the cohomology class associated to the closed $n$-form
\begin{equation}
  c_n(\mathbf{C}^{N}|_M,g,d)=\left(\frac{1}{2\pi\sqrt{-1}}\right)^{\frac{(n+1)}{2}}\mathrm{Tr}[(g^{-1}dg)^n].
\end{equation}
The odd Chern character form ${\rm ch}(\mathbf{C}^{N}|_M,g,d)$ associated to $g$ and $d$ is by definition
\begin{equation}
  {\rm ch}(\mathbf{C}^{N}|_M,g,d)=\sum^{\infty}_{n=1}\frac{n!}{(2n+1)!}c_{2n+1}((\mathbf{C}^{N}|_M,g,d)).
\end{equation}
Let the connection $\nabla_{\bar{u}}$ on the trivial bundle $\mathbf{C}^{N}|_M$ be defined by
\begin{equation}
  \nabla_{\bar{u}}=(1-\bar{u})d+\bar{u}g^{-1}\cdot d \cdot g,\ \ \bar{u}\in[0,1].
\end{equation}
Then we have
\begin{equation}
  d{\rm ch}(\mathbf{C}^{N}|_M,g,d)={\rm ch}(\mathbf{C}^{N}|_M,d)-{\rm ch}(\mathbf{C}^{N}|_M,g^{-1}\cdot d\cdot g).
\end{equation}

Now let $g:M\to SO(N)$ and assume that $N$ is even and large enough. Let $E$ denote the trivial real vector bundle of rank $N$ over $M$. We equip $E$ with the canonical trivial metric and trivial connection $d$. Set
$$\nabla_{\bar{u}}=d+\bar{u}g^{-1}dg,\ \ \bar{u}\in[0,1].$$
Let $R_{\bar{u}}$ be the curvature of $\nabla_{\bar{u}}$, then
\begin{equation}
  R_{\bar{u}}=(\bar{u}^2-\bar{u})(g^{-1}dg)^2.
\end{equation}
We also consider the complexification of $E$, and $g$ extends to a unitary automorphism of $E_{\mathbf{C}}$. The connection $\nabla_{\bar{u}}$ extends to a Hermitian connection on $E_{\mathbf{C}}$ with curvature still given by (3.6). Let $\Delta(E)$ be the spinor bundle of $E$, which is a trivial Hermitian
bundle of rank $2^{\frac{N}{2}}$. We assume that $g$ lifts to the Spin group ${\rm Spin}(N):g^{\Delta}:M\to {\rm Spin}(N)$. So $g^{\Delta}$ can be viewed as an automorphism of $\Delta(E)$ preserving the Hermitian metric. We lift $d$ on $E$ to a trivial Hermitian connection $d^{\Delta}$ on $\Delta(E)$, then
\begin{equation}
  \nabla_{\bar{u}}^{\Delta}=(1-\bar{u})d^{\Delta}+\bar{u}(g^{\Delta})^{-1}\cdot d^{\Delta} \cdot g^{\Delta},\ \ \bar{u}\in[0,1]
\end{equation}
lifts $\nabla_{\bar{u}}$ on $E$ to $\Delta(E)$. Let $Q_j(E),j=1,2,3$ be the virtual bundles defined as follows:
\begin{equation}
Q_1(E)=\triangle(E)\otimes
   \bigotimes _{n=1}^{\infty}\wedge_{q^n}(\widetilde{E_\mathbf{C}});
\end{equation}
\begin{equation}
Q_2(E)=\bigotimes _{n=1}^{\infty}\wedge_{-q^{n-\frac{1}{2}}}(\widetilde{E_\mathbf{C}});
\end{equation}
\begin{equation}
Q_3(E)=\bigotimes _{n=1}^{\infty}\wedge_{q^{n-\frac{1}{2}}}(\widetilde{E_\mathbf{C}}).
\end{equation}
Let $g$ on $E$ have a lift $g^{Q(E)}$ on $Q(E)$ and $\nabla_{\bar{u}}$ have a lift $\nabla^{Q(E)}_{\bar{u}}$ on $Q(E)$. Following \cite{HY}, we define ${\rm ch}(Q_j(E),g^{Q_j(E)},d,\tau),~j=1,2,3$ as 
\begin{equation}
{\rm ch}(Q_j(E),\nabla^{Q_j(E)}_0,\tau)-{\rm ch}(Q_j(E),\nabla^{Q_j(E)}_1,\tau)=d{\rm ch}(Q_j(E),g^{Q_j(E)},d,\tau),
\end{equation}
where
\begin{equation}
{\rm ch}(Q_1(E),g^{Q_1(E)},d,\tau)=-\frac{2^{N/2}}{8\pi^2}\int^1_0{\rm Tr}\left[g^{-1}dg\frac{\theta'_1(R_{\bar{u}}/(4\pi^2),\tau)}{\theta_1(R_{\bar{u}}/(4\pi^2),\tau)}\right]du,
\end{equation}
and for $j=2,3$
\begin{equation}
{\rm ch}(Q_j(E),g^{Q_j(E)},d,\tau)=-\frac{1}{8\pi^2}\int^1_0{\rm Tr}\left[g^{-1}dg\frac{\theta'_j(R_{\bar{u}}/(4\pi^2),\tau)}{\theta_j(R_{\bar{u}}/(4\pi^2),\tau)}\right]du.
\end{equation}

Let $M$ be a $(4k-1)$-dimensional spin$^c$ manifold. Set 
\begin{equation}
\begin{split}
    \overline{W}_1&(M,L,V)\\
    &=\left\{\widehat{A}(TM){\rm exp}(\frac{c}{2}){\rm ch}(\Theta(T_{\mathbf{C}}M,L_{\bf{R}}\otimes\mathbf{C})){\rm ch}(Q_1(V_{\mathbf{C}})){\rm ch}(Q_1(E),g^{Q_1(E)},d,\tau)\right\}^{4k-1},
\end{split}
\end{equation}
\begin{equation}
\begin{split}
    \overline{W}_2&(M,L,V)\\
    &=\left\{\widehat{A}(TM){\rm exp}(\frac{c}{2}){\rm ch}(\Theta(T_{\mathbf{C}}M,L_{\bf{R}}\otimes\mathbf{C})){\rm ch}(Q_2(V_{\mathbf{C}})){\rm ch}(Q_2(E),g^{Q_2(E)},d,\tau)\right\}^{4k-1},
\end{split}
\end{equation}
\begin{equation}
\begin{split}
    \overline{W}_3&(M,L,V)\\
    &=\left\{\widehat{A}(TM){\rm exp}(\frac{c}{2}){\rm ch}(\Theta(T_{\mathbf{C}}M,L_{\bf{R}}\otimes\mathbf{C})){\rm ch}(Q_3(V_{\mathbf{C}})){\rm ch}(Q_3(E),g^{Q_3(E)},d,\tau)\right\}^{4k-1}.
\end{split}
\end{equation}

Let $M$ be a closed smooth spin$^c$ Riemannian manifold which admits a circle action and assume that $M$ admits an $S^1$-action that lifts to an action on $L$. Let $V$ be an $S^1$-equivariant complex vector bundle over $M$ carrying an $S^1$-invariant Hermitian connection. In addition, we assume $g:M\to GL(N,\mathbf{C})$ is $S^1$-invariant, i.e.,
\begin{equation}
  g(hx)=g(x), \ {\rm for \ any} \ h\in S^1 \ {\rm and} \ x\in M.
\end{equation}
Thus the twisted Toeplitz operators $$\mathcal{T}_c\otimes\Theta(T_{\mathbf{C}}M,L_{\bf{R}}\otimes\mathbf{C})\otimes Q_\lambda(V_{\mathbf{C}})\otimes(Q_\lambda(E),g^{Q_\lambda(E)}),~\lambda=1,2,3$$ are $S^1$-equivariant.

We consider the fixed point case. Similarly, we have the following $S^1$-equivariant decomposition when restricted to $F_{\alpha}$:
\begin{equation}
  TM|_{F_{\alpha}}=\mathbf{N}_1\oplus\cdots\oplus \mathbf{N}_{r}\oplus TF_{\alpha}
\end{equation}
and
\begin{equation}
  V|_{F_\alpha}=V_1\oplus\cdots\oplus V_l\oplus V^{\mathbb{R}}_0,
\end{equation}
where $\mathbf{N}_{\beta}$ and $V_\nu$ are complex vector bundles, and $h$ acts on $\mathbf{N}_{\beta}$ and $V_\nu$ by $e^{2\pi \mathbf{i}m_{\beta}t}$ and $e^{2\pi \mathbf{i}n_{\nu}t}$, respectively. They have corresponding Chern roots as before. We then have the following rigidity theorems.
\begin{thm}
  For a $(4k-1)$-dimensional connected spin manifold with a non-trivial $S^1$-action, and assuming $g_*:\pi_1(M)\to\pi_1(SO(N))=\mathbb{Z}_2$ is trivial, if $$3p_1(L)_{S^1}+p_1(V)_{S^1}=p_1(TM)_{S^1}$$ and $c_3(E,g,d)=0$, then the Toeplitz operators $$\mathcal{T}_c\otimes\Theta(T_{\mathbf{C}}M,L_{\bf{R}}\otimes\mathbf{C})\otimes Q_\lambda(V_{\mathbf{C}})\otimes(Q_\lambda(E),g^{Q_\lambda(E)}),~\lambda=1,2,3$$ are rigid.
\end{thm}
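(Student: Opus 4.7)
The plan is to follow the blueprint of Section 3 (the even-dimensional argument), replacing the Lefschetz fixed point formula for Dirac operators by its odd-dimensional counterpart for Toeplitz operators developed by Han and Yu. As a first step I would compute the equivariant Lefschetz numbers $\overline{\mathcal{L}}_\lambda(g;\tau)$ of the operators $\mathcal{T}_c\otimes\Theta(T_{\mathbf{C}}M,L_{\mathbf{R}}\otimes\mathbf{C})\otimes Q_\lambda(V_{\mathbf{C}})\otimes(Q_\lambda(E),g^{Q_\lambda(E)})$. The resulting expressions differ from those of Proposition \ref{pro3.2} by an extra factor $\mathrm{ch}(Q_\lambda(E),g^{Q_\lambda(E)},d,\tau)$, which, because $g$ is $S^1$-invariant, depends only on the point of $F_\alpha$ and not on the equivariant parameter $t$. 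Viewing these as functions $\overline{\mathcal{L}}_\lambda'(t,\tau)$ on $\mathbf{C}\times\mathbf{H}$ reduces the theorem to the statement that each $\overline{\mathcal{L}}_\lambda'$ is independent of $t$.

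Next I would establish the odd-dimensional analogues of Lemmas \ref{le3.3} and \ref{le3.4}. The double quasi-periodicity in $t$ uses the same theta-function transformation rules together with the condition $3p_1(L)_{S^1}+p_1(V)_{S^1}=p_1(TM)_{S^1}$; the extra Chern--Simons-type factor is insensitive to the replacements $t\mapsto t+a$ and $t\mapsto t+a\tau$ for $a\in 2\mathbb{Z}$, since it involves neither the $m_\beta$, $n_\nu$, nor $\sigma$ parameters. The modular transformations under $S$ and $T$ produce, as in Lemma \ref{le3.4}, a permutation of the three functions $\overline{\mathcal{L}}_\lambda'$ up to a weight factor $\tau^{2k-1}$ (accounting for the odd dimension) and powers of $2$; on the odd Chern character factor, one applies the $S$- and $T$-transformation laws term by term to the Taylor expansion of $\theta_j'/\theta_j$ in the curvature $R_{\bar u}$.

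The main obstacle is showing that the modular transforms of $\overline{\mathcal{L}}_\lambda'$ remain holomorphic on $\mathbf{R}\times\mathbf{H}$, which is exactly the point where the extra hypothesis $c_3(E,g,d)=0$ enters. When one transports $\overline{\mathcal{L}}_\lambda'$ by some $g_0\in SL_2(\mathbb{Z})$ chosen to turn a putative pole $t=\tfrac{k}{l}(c\tau+d)$ into $t=\tfrac{k}{l}$, the odd Chern character factor acquires a rescaling of the curvature argument of $\theta_j'/\theta_j$. Expanding this logarithmic derivative as a power series in $R_{\bar u}\propto(g^{-1}dg)^2$, the coefficient of the first non-constant term reduces, after integration in $\bar u$, precisely to a multiple of $c_3(E,g,d)$. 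The vanishing of $c_3$ kills this term and is what allows the proof of the odd-dimensional analogue of Lemma 3.5 to go through verbatim as in [HY, Lemma 2.3]; the triviality of $g_\ast\colon\pi_1(M)\to\pi_1(SO(N))=\mathbb{Z}_2$ is used to ensure that the spin lift $g^\Delta$ needed to define $Q_1(E)$ exists.

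With these three ingredients in hand, the proof concludes exactly as in the proof of Theorem 3.1: if $L'$ is one of the $\overline{\mathcal{L}}_\lambda'$ and has a polar divisor of the form $t=\tfrac{k}{l}(c\tau+d)$ with $(c,d)=1$, then choosing $a,b$ with $ad-bc=1$ and applying the $SL_2(\mathbb{Z})$-element $\bigl(\begin{smallmatrix}d&-b\\-c&a\end{smallmatrix}\bigr)$ transports this divisor to $t=\tfrac{k}{l}$, a real line in $\mathbf{C}\times\mathbf{H}$. By the permutation result this transported function is, up to a nonzero constant, again one of the $\overline{\mathcal{L}}_\lambda'$, contradicting the $\mathbf{R}\times\mathbf{H}$-holomorphicity established above. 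Hence each $\overline{\mathcal{L}}_\lambda'$ is holomorphic on $\mathbf{C}\times\mathbf{H}$, and combined with the double quasi-periodicity it must be independent of $t$, which is the desired rigidity.
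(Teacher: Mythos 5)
Your overall strategy matches the paper exactly: compute the Lefschetz numbers with the extra odd Chern--Simons factor, establish double quasi-periodicity in $t$, permute the three functions up to weight under $S$ and $T$, invoke holomorphicity on $\mathbf{R}\times\mathbf{H}$, and conclude with Liu's pole-chasing argument. The observation that the extra factor ${\rm ch}(Q_\lambda(E),g^{Q_\lambda(E)},d,\tau)$ is insensitive to $t\mapsto t+a$ and $t\mapsto t+a\tau$ because $g$ is $S^1$-invariant is correct, as is the remark that triviality of $g_*:\pi_1(M)\to\mathbb{Z}_2$ guarantees the spin lift $g^\Delta$ exists.

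However, you have misplaced where the hypothesis $c_3(E,g,d)=0$ enters. You claim it is needed for the holomorphicity statement (the analogue of Lemma 3.5), but in fact the paper's Lemma 4.5 does not use $c_3$ at all; holomorphicity near real $t$ is a fixed-point-formula statement that is independent of the Chern--Simons data. The hypothesis $c_3(E,g,d)=0$ is used in the modular-transformation lemma (the analogue of Lemma 3.4), namely to invoke Proposition 2.2 of Han--Yu, which asserts that the odd Chern character forms ${\rm ch}(Q_j(E),g^{Q_j(E)},d,\tau)$ transform under $S$ and $T$ by the same permutation-with-weight pattern as the theta quotients precisely when $c_3=0$. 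Without this, the modular transport $\mathcal{F}(g_0(t,\tau))$ would no longer be, up to a constant, one of $\{\mathcal{F}_1,\mathcal{F}_2,\mathcal{F}_3\}$, and the contradiction in the final step would not arise. Your account of the mechanism (the anomalous term in the expansion of $\theta_j'/\theta_j$ being proportional to $c_3$) is in the right spirit, but the paper reads this off directly from [HY, Prop.\ 2.2] and places it inside the $S$-transformation computation rather than the holomorphicity argument. There is also a minor slip: the modular weight appearing in the paper's Lemma 4.4 is $\tau^{2k}$, not $\tau^{2k-1}$, because the extra $\tau^{2i}$ factor from the degree-$(4i-1)$ component of the odd Chern character combines with the geometric part to give an even total weight; this does not affect the validity of the argument but should be corrected.
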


First, we calculate the corresponding Lefschetz numbers.
\begin{prop}
 The Lefschetz numbers of $$\mathcal{T}_c\otimes\Theta(T_{\mathbf{C}}M,L_{\bf{R}}\otimes\mathbf{C})\otimes Q_\lambda(V_{\mathbf{C}})\otimes(Q_\lambda(E),g^{Q_\lambda(E)})$$ are
\begin{equation}\label{eq:4.19}
  \begin{split}
  \overline{\mathcal{L}}_1(g;\tau)&=2^{\bar{l}-\bar{r}}\left(\frac{-\mathbf{i}}{\pi}\right)^{\bar{r}}
  \sum_{\alpha}\int_{F_\alpha}\left(\prod^{s}_{j=1}
  y_j\frac{\theta'(0,\tau)}{\theta(y_j,\tau)}\prod^{r}_{\beta=1}
  \prod^{\dim\mathbf{N}_\beta}_{i_\beta=1}\frac{\theta'(0,\tau)}{\theta(x_{\beta}^{i_\beta}+
  m_{\beta}t,\tau)}\right)\\
  &\cdot\frac{\theta_1(u+\sigma t,\tau)\theta_2(u+\sigma t,\tau)\theta_3(u+\sigma t,\tau)}{\theta_1(0,\tau)\theta_2(0,\tau)
  \theta_3(0,\tau)}\cdot\prod_{\nu=0}^l\prod_{h_\nu=1}^{\dim V_\nu}\frac{\theta_1(z_\nu^{h_\nu}+n_\nu t,\tau)}{\theta_1(0,\tau)}\\
  &\cdot{\rm ch}(Q_1(E),g^{Q_1(E)},d,\tau),
  \end{split}
\end{equation}
\begin{equation}\label{eq:4.20}
  \begin{split}
  \overline{\mathcal{L}}_2(g;\tau)&=-\left(\frac{\mathbf{i}}{2\pi}\right)^{\bar{r}}
  \sum_{\alpha}\int_{F_\alpha}\left(\prod^{s}_{j=1}
  y_j\frac{\theta'(0,\tau)}{\theta(y_j,\tau)}\prod^{r}_{\beta=1}
  \prod^{\dim\mathbf{N}_\beta}_{i_\beta=1}\frac{\theta'(0,\tau)}{\theta(x_{\beta}^{i_\beta}+
  m_{\beta}t,\tau)}\right)\\
  &\cdot\frac{\theta_1(u+\sigma t,\tau)\theta_2(u+\sigma t,\tau)\theta_3(u+\sigma t,\tau)}{\theta_1(0,\tau)\theta_2(0,\tau)
  \theta_3(0,\tau)}\cdot\prod_{\nu=0}^l\prod_{h_\nu=1}^{\dim V_\nu}\frac{\theta_2(z_\nu^{h_\nu}+n_\nu t,\tau)}{\theta_2(0,\tau)}\\
  &\cdot{\rm ch}(Q_2(E),g^{Q_2(E)},d,\tau),
  \end{split}
\end{equation}
and
\begin{equation}\label{eq:4.21}
  \begin{split}
   \overline{\mathcal{L}}_3(g;\tau)&=-\left(\frac{\mathbf{i}}{2\pi}\right)^{\bar{r}}
  \sum_{\alpha}\int_{F_\alpha}\left(\prod^{s}_{j=1}
  y_j\frac{\theta'(0,\tau)}{\theta(y_j,\tau)}\prod^{r}_{\beta=1}
  \prod^{\dim\mathbf{N}_\beta}_{i_\beta=1}\frac{\theta'(0,\tau)}{\theta(x_{\beta}^{i_\beta}+
  m_{\beta}t,\tau)}\right)\\
  &\cdot\frac{\theta_1(u+\sigma t,\tau)\theta_2(u+\sigma t,\tau)\theta_3(u+\sigma t,\tau)}{\theta_1(0,\tau)\theta_2(0,\tau)
  \theta_3(0,\tau)}\cdot\prod_{\nu=0}^l\prod_{h_\nu=1}^{\dim V_\nu}\frac{\theta_3(z_\nu^{h_\nu}+n_\nu t,\tau)}{\theta_3(0,\tau)}\\
  &\cdot{\rm ch}(Q_3(E),g^{Q_3(E)},d,\tau).
  \end{split}
\end{equation}
\end{prop}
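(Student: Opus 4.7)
The plan is to parallel the derivation used for Proposition \ref{pro3.2}, replacing the standard spin$^c$ Lefschetz fixed point formula by the Toeplitz version established by Han and Yu \cite{HY}. All of the bundle-theoretic work that converts local curvature data into theta-function expressions has already been carried out in the proof of Proposition \ref{pro3.2}; what is new here is the incorporation of the transgressed odd Chern character ${\rm ch}(Q_\lambda(E),g^{Q_\lambda(E)},d,\tau)$ as a multiplicative factor on each fixed component, which is legitimate precisely because of the $S^1$-invariance hypothesis on $g$.

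First I would apply the Han--Yu Toeplitz Lefschetz fixed point formula to the operator $\mathcal{T}_c\otimes\Theta(T_{\mathbf{C}}M,L_{\mathbf{R}}\otimes\mathbf{C})\otimes Q_\lambda(V_{\mathbf{C}})\otimes(Q_\lambda(E),g^{Q_\lambda(E)})$, regarding the virtual bundle $\Theta(T_{\mathbf{C}}M,L_{\mathbf{R}}\otimes\mathbf{C})\otimes Q_\lambda(V_{\mathbf{C}})$ as an auxiliary $S^1$-equivariant coefficient and $(Q_\lambda(E),g^{Q_\lambda(E)})$ as the Toeplitz datum. The invariance $g(hx)=g(x)$ for $h\in S^1$ ensures that $g^{-1}dg$ has no component along orbits of the circle action, so the transgressed form ${\rm ch}(Q_\lambda(E),g^{Q_\lambda(E)},d,\tau)$ is $S^1$-invariant and restricts naturally to every fixed component $F_\alpha$. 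The resulting localized integrand therefore has exactly the shape of formula \eqref{eq:3.13}, multiplied by ${\rm ch}(Q_\lambda(E),g^{Q_\lambda(E)},d,\tau)|_{F_\alpha}$.

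Next I would invoke verbatim the computations (3.14)--(3.18) of the proof of Proposition \ref{pro3.2} to rewrite each non-Toeplitz local factor in terms of $\theta,\theta_1,\theta_2,\theta_3$: the equivariant $\widehat A$-class together with the normal-bundle Pfaffian and part of the $\Theta$-character produce the products $\prod_j y_j\theta'(0,\tau)/\theta(y_j,\tau)$ and $\prod_{\beta,i_\beta}\theta'(0,\tau)/\theta(x_\beta^{i_\beta}+m_\beta t,\tau)$; the remaining factors of the $\Theta$-character combined with $e^{c/2+\pi\sqrt{-1}\sigma t}$ yield $\theta_1\theta_2\theta_3(u+\sigma t,\tau)/[\theta_1(0,\tau)\theta_2(0,\tau)\theta_3(0,\tau)]$; and ${\rm ch}_g(Q_\lambda(V_{\mathbf{C}}))$ contributes $\prod_{\nu,h_\nu}\theta_\mu(z_\nu^{h_\nu}+n_\nu t,\tau)/\theta_\mu(0,\tau)$ for the appropriate $\mu\in\{1,2,3\}$ determined by $\lambda$. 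Collecting the scalar prefactors ($2^{\bar l-\bar r}(-\mathbf{i}/\pi)^{\bar r}$ for $\lambda=1$ and $-(\mathbf{i}/2\pi)^{\bar r}$ for $\lambda=2,3$) and multiplying by the Toeplitz factor then produces \eqref{eq:4.19}, \eqref{eq:4.20}, and \eqref{eq:4.21} respectively.

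The main point requiring care is the application of the Han--Yu Toeplitz fixed point formula with a coefficient bundle as elaborate as $\Theta\otimes Q_\lambda(V_{\mathbf{C}})$, which is a formal infinite sum (in the $q$-expansion) of $S^1$-equivariant virtual bundles. Since the formula is $K$-theoretically linear in the auxiliary bundle and everything converges coefficientwise in $q$, the extension is essentially formal, but one should verify that the odd transgression commutes with the infinite tensor product and that the resulting integrals have the expected $q$-expansion. Once this check is granted, the proposition follows by direct substitution, mirroring the closing step of the proof of Proposition \ref{pro3.2}.
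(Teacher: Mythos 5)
Your proposal is correct and follows essentially the same route as the paper's proof: apply the Lefschetz fixed-point formula for the equivariant Toeplitz operator, reuse the theta-function rewritings (3.14)--(3.18) for the $\widehat A$-class, normal bundle, $\Theta$, and $Q_\lambda(V_{\mathbf C})$ factors, and tack on the odd Chern character ${\rm ch}(Q_\lambda(E),g^{Q_\lambda(E)},d,\tau)$ as a multiplicative factor restricted to each $F_\alpha$ via the $S^1$-invariance of $g$. One small discrepancy: for the spin$^c$ Toeplitz Lefschetz formula the paper cites Wang \cite{Wa} rather than Han--Yu (whose formula concerns the spin case); the argument is unaffected, but the attribution should be to \cite{Wa}.
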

\begin{proof}
Similar to Proposition 3.2, we can calculate equations (4.19), (4.20) and (4.21) by using the Lefschetz fixed point formula (cf. \cite{Wa}), (4.11) and (4.12).
\end{proof}

Next, we view the above expressions as defining functions $\mathcal{F}_\lambda(t,\tau)$ for each $\lambda\in\{1,2,3\}$, i.e., $\mathcal{F}_\lambda(t,\tau)=\overline{\mathcal{L}}_\lambda(g;\tau)$.
The expressions for $\mathcal{F}_\lambda(t,\tau)$ involve only complex-differentiable functions, so we can extend their domain to every complex number $t$ and choice of $\tau$ in the open upper half-plane, i.e., $\mathcal{F}_\lambda(t,\tau)$ where the expression exists in $\mathbf{C}\times\mathbf{H}$. The Witten rigidity theorems are equivalent to the statement that these $\mathcal{F}_\lambda(t,\tau)$ are independent of $t$.  We then have the following lemma.
\begin{lem}
Let $(t,\tau)\in\mathbf{C}\times\mathbf{H}$ be in the domain of $\mathcal{F}_\lambda(t,\tau)$,~$\lambda\in\{1,2,3\}$.\\
(1) Then $\mathcal{F}_\lambda(t,\tau)=\mathcal{F}_\lambda(t+a,\tau)$ for any $a\in 2\mathbb{Z}$.\\
(2) If $3p_1(L)_{S^1}+p_1(V)_{S^1}=p_1(TM)_{S^1}$, then $\mathcal{F}_\lambda(t,\tau)=\mathcal{F}_\lambda(t+a\tau,\tau)$ for any $a\in 2\mathbb{Z}$.
\end{lem}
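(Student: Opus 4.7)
The plan is to reduce the proof directly to Lemma~\ref{le3.3}. The key structural observation is that $\mathcal{F}_\lambda(t,\tau)$ equals the expression on the right-hand sides of \eqref{eq:4.19}--\eqref{eq:4.21}, which is the even-dimensional Lefschetz expression $\mathcal{L}'_\lambda(t,\tau)$ from Proposition~\ref{pro3.2} multiplied by the additional factor ${\rm ch}(Q_\lambda(E),g^{Q_\lambda(E)},d,\tau)$. Because the map $g:M\to GL(N,\mathbf{C})$ is assumed $S^1$-invariant, so are the connections $\nabla_{\bar u}$ and their curvatures $R_{\bar u}$, and hence this extra factor depends only on $\tau$ (and geometric data on $F_\alpha$), carrying \emph{no} dependence on the equivariant parameter $t$. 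So for checking invariance in $t$, this factor behaves as a constant and may be ignored.

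For part~(1), substituting $t\to t+a$ with $a\in 2\mathbb{Z}$ into the theta-function factors produces shifts $x_\beta^{i_\beta}+m_\beta t\mapsto x_\beta^{i_\beta}+m_\beta t+am_\beta$, $u+\sigma t\mapsto u+\sigma t+a\sigma$, and $z_\nu^{h_\nu}+n_\nu t\mapsto z_\nu^{h_\nu}+n_\nu t+an_\nu$. Since $am_\beta,a\sigma,an_\nu$ are all even integers, the first column of the theta-transformation formulas recalled in the proof of Lemma~\ref{le3.3} gives trivial sign changes in each of $\theta,\theta_1,\theta_2,\theta_3$; the $y_j$-factors and the $\theta'(0,\tau),\theta_\mu(0,\tau)$ denominators are unaffected, so $\mathcal{F}_\lambda(t+a,\tau)=\mathcal{F}_\lambda(t,\tau)$.

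For part~(2), I would apply the quasi-periodicity in the second column of those formulas. Each theta factor in the numerator picks up an exponential whose exponent is linear in the corresponding Chern root and quadratic in the associated weight. Gathering the total exponent across all factors in the product, the hypothesis $3p_1(L)_{S^1}+p_1(V)_{S^1}=p_1(TM)_{S^1}$ yields, exactly as in the proof of Lemma~\ref{le3.3}, the three scalar identities
$$3u^2+\sum_\nu\sum_{h_\nu}(z_\nu^{h_\nu})^2=\sum_j y_j^2+\sum_\beta\sum_{i_\beta}(x_\beta^{i_\beta})^2,$$
$$3u\sigma+\sum_\nu\sum_{h_\nu}z_\nu^{h_\nu}n_\nu=\sum_\beta\sum_{i_\beta}x_\beta^{i_\beta}m_\beta,$$
$$3\sigma^2+\sum_\nu\sum_{h_\nu}n_\nu^2=\sum_\beta\sum_{i_\beta}m_\beta^2$$
which precisely match the $\tau$-, $t$- and Chern-root-coefficients of the collected exponent. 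Hence the exponential factor is $1$ on each $F_\alpha$, yielding $\mathcal{F}_\lambda(t+a\tau,\tau)=\mathcal{F}_\lambda(t,\tau)$.

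The main potential pitfall is not analytical but combinatorial bookkeeping: the line-bundle factor $u+\sigma t$ appears three times in the numerator (once each through $\theta_1,\theta_2,\theta_3$), which is exactly what produces the coefficient $3$ in front of $p_1(L)_{S^1}$ in the hypothesis. One must verify that these three contributions, together with the $\theta_\mu(z_\nu^{h_\nu}+n_\nu t,\tau)$ contributions (which appear with a single $\mu$ depending on $\lambda$) and the denominator contributions from $\theta(x_\beta^{i_\beta}+m_\beta t,\tau)$ and $\theta(y_j,\tau)$, all line up with the three scalar identities above. This is the same bookkeeping already executed in Lemma~\ref{le3.3}, so no new difficulty arises.
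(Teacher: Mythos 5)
Your proposal is correct and follows essentially the paper's own approach, which is to reduce Lemma~4.3 to Lemma~3.3 (the paper's proof is literally ``Similar to Lemma~3.3''). The one thing you add that the paper leaves implicit is the justification for why the reduction is valid: since $g:M\to SO(N)$ is assumed $S^1$-invariant, the extra factor ${\rm ch}(Q_\lambda(E),g^{Q_\lambda(E)},d,\tau)$ carries no dependence on the equivariant parameter $t$, and therefore is transparent to the substitutions $t\mapsto t+a$ and $t\mapsto t+a\tau$. This is exactly the observation that makes ``similar'' an honest claim, so your write-up is a faithful (and slightly more complete) version of the intended argument.
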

\begin{proof}
Similar to Lemma 3.3.
\end{proof}

Next, we prove that these $\mathcal{F}_\lambda$ are holomorphic in $(t,\tau)$ on $\mathbf{C}\times\mathbf{H}$. Similar to Lemma 3.4, we have the following lemma.
\begin{lem}
  (1) Under the assumptions $3p_1(L)_{S^1}+p_1(V)_{S^1}=p_1(TM)_{S^1}$ and $c_3(E,g,d)=0$, the action of $S$ on $\mathcal{F}_\lambda$ yields
  \begin{equation}\label{eq:4.22}
    \mathcal{F}_1\left(\frac{t}{\tau},-\frac{1}{\tau}\right)=2^{\bar{l}-\bar{r}+N/2}\tau^{2k}\mathcal{F}_2(t,\tau),
  \end{equation}
  \begin{equation}\label{eq:4.23}
    \mathcal{F}_2\left(\frac{t}{\tau},-\frac{1}{\tau}\right)=2^{-(\bar{l}-\bar{r}+N/2)}\tau^{2k}\mathcal{F}_1(t,\tau),
  \end{equation}
  \begin{equation}\label{eq:4.24}
    \mathcal{F}_3\left(\frac{t}{\tau},-\frac{1}{\tau}\right)=\tau^{2k}\mathcal{F}_3(t,\tau).
  \end{equation}
  (2) Under the action of $T$ on $\mathcal{F}_\lambda$,
  \begin{equation}\label{eq:4.25}
    \mathcal{F}_1(t,\tau+1)=\mathcal{F}_1(t,\tau),
  \end{equation}
   \begin{equation}\label{eq:4.26}
    \mathcal{F}_2(t,\tau+1)=\mathcal{F}_3(t,\tau),
  \end{equation}
   \begin{equation}\label{eq:4.27}
    \mathcal{F}_3(t,\tau+1)=\mathcal{F}_2(t,\tau).
  \end{equation}
\end{lem}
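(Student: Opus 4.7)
The plan is to exploit the fact that $\mathcal{F}_\lambda(t,\tau)$ differs from the even-dimensional $\mathcal{L}'_\lambda(t,\tau)$ of Section 3 only by the extra factor $\text{ch}(Q_\lambda(E),g^{Q_\lambda(E)},d,\tau)$, since the Chern-root data coming from $TM$, $V$, $L$ appears identically in both. Hence the modular behavior of those ingredients is already handled by Lemma 3.4 under the hypothesis $3p_1(L)_{S^1}+p_1(V)_{S^1}=p_1(TM)_{S^1}$, and it remains only to determine the $S$- and $T$-transformations of $\text{ch}(Q_\lambda(E),g^{Q_\lambda(E)},d,\tau)$; the claimed formulas (4.22)--(4.27) then follow by multiplying the two pieces.

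For part (2), I would substitute $\tau\to\tau+1$ directly into the integrands (4.11)--(4.12) using
$$\theta_1(v,\tau+1)=e^{\pi i/4}\theta_1(v,\tau),\quad \theta_2(v,\tau+1)=\theta_3(v,\tau),\quad \theta_3(v,\tau+1)=e^{\pi i/4}\theta_2(v,\tau).$$
Logarithmic differentiation in $v$ cancels the constant prefactors, so $\theta_1'/\theta_1$ is invariant while $\theta_2'/\theta_2$ and $\theta_3'/\theta_3$ exchange. Therefore $\text{ch}(Q_1(E),\ldots,\tau+1)=\text{ch}(Q_1(E),\ldots,\tau)$ and $\text{ch}(Q_2(E),\ldots,\tau+1)$ and $\text{ch}(Q_3(E),\ldots,\tau+1)$ swap. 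Combining with the $T$-action from Lemma 3.4 yields (4.25)--(4.27).

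For part (1), the key identity, obtained from Lemma 3.4, is
$$\frac{\theta_j'(v,-1/\tau)}{\theta_j(v,-1/\tau)}\;=\;\frac{2\pi i v}{\tau}\,+\,\frac{\theta_\mu'(v,\tau)}{\theta_\mu(v,\tau)},$$
where $(j,\mu)=(1,2),(2,1),(3,3)$, the multiplicative prefactor $\tfrac{1}{i}\sqrt{\tau/i}$ dropping out on taking logarithmic derivatives. Substituting $v=R_{\bar u}/(4\pi^2)$ into (4.11)--(4.12), the $\theta_\mu'/\theta_\mu$ term reproduces $\text{ch}(Q_\mu(E),\ldots,\tau)$ up to the factor $2^{N/2}$ that distinguishes the $j=1$ prefactor from $j=2,3$. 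The extra $2\pi i v/\tau$ term contributes
$$\frac{\text{const}}{\tau}\int_0^1\text{Tr}\bigl[g^{-1}dg\cdot R_{\bar u}\bigr]\,d\bar u \;=\; \frac{\text{const}}{\tau}\int_0^1(\bar u^2-\bar u)\,d\bar u\cdot\text{Tr}\bigl[(g^{-1}dg)^3\bigr],$$
using $R_{\bar u}=(\bar u^2-\bar u)(g^{-1}dg)^2$. Since $\text{Tr}[(g^{-1}dg)^3]$ is a nonzero multiple of $c_3(E,g,d)$, the hypothesis $c_3(E,g,d)=0$ kills this unwanted term. One then reads off
$$\text{ch}(Q_j(E),\ldots,-1/\tau)\;=\;2^{\epsilon_j N/2}\,\text{ch}(Q_\mu(E),\ldots,\tau),\qquad (\epsilon_1,\epsilon_2,\epsilon_3)=(+1,-1,0),$$
and multiplying by the $S$-action on $\mathcal{L}'_\lambda$ from Lemma 3.4 (producing the factor $\tau^{2k}$ and the $2^{\bar{l}-\bar{r}}$ exchange between indices $1$ and $2$) gives (4.22)--(4.24) exactly.

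The main technical obstacle is the $S$-transformation, specifically recognizing that the quadratic term $e^{\pi i v^2/\tau}$ in the modular law for $\theta_j$ produces, after logarithmic differentiation and substitution of $v=R_{\bar u}/(4\pi^2)$, precisely a multiple of $c_3(E,g,d)$ via the identity $R_{\bar u}=(\bar u^2-\bar u)(g^{-1}dg)^2$. Once this cancellation is in place, the remaining work is bookkeeping of the constants $2^{N/2}$ and the power $\tau^{2k}$, which proceeds exactly as in Lemma 3.4.
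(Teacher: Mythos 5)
Your overall plan -- split $\mathcal{F}_\lambda$ into the even-characteristic-form part already treated in Lemma~3.4 and the extra odd Chern character factor $\mathrm{ch}(Q_\lambda(E),g^{Q_\lambda(E)},d,\tau)$, transform each separately, and multiply -- is exactly the structure of the paper's argument. The paper simply quotes Proposition~2.2 of \cite{HY} for the modular behavior of the odd Chern character, whereas you attempt to re-derive it. That is legitimate in principle, but your sketch contains a concrete error in the $\tau$-scaling that propagates into the final bookkeeping.

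Your stated ``key identity''
\begin{equation*}
\frac{\theta_j'(v,-1/\tau)}{\theta_j(v,-1/\tau)}\;=\;\frac{2\pi i v}{\tau}\;+\;\frac{\theta_\mu'(v,\tau)}{\theta_\mu(v,\tau)}
\end{equation*}
is not correct. Starting from, say, $\theta_1(t/\tau,-1/\tau)=\tfrac{1}{i}\sqrt{\tau/i}\,e^{\pi i t^2/\tau}\theta_2(t,\tau)$, substituting $t=\tau v$ and taking $\partial_v\log$ gives
\begin{equation*}
\frac{\theta_1'(v,-1/\tau)}{\theta_1(v,-1/\tau)}\;=\;2\pi i\,\tau v\;+\;\tau\,\frac{\theta_2'(\tau v,\tau)}{\theta_2(\tau v,\tau)},
\end{equation*}
and similarly for $j=2,3$. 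The multiplicative prefactor indeed drops out, but the argument is rescaled to $\tau v$ and there is an overall factor of $\tau$. After substituting $v=R_{\bar u}/(4\pi^2)$, the linear term is still proportional to $\mathrm{Tr}[(g^{-1}dg)^3]$ and vanishes under $c_3(E,g,d)=0$ (you get the right conclusion here despite the wrong power of $\tau$), but the second term expands, degree by degree, with the substitution $R_{\bar u}\mapsto\tau R_{\bar u}$, so the degree-$(4i-1)$ part of $\mathrm{ch}(Q_j(E),\ldots)$ picks up exactly $\tau^{2i}$. This is precisely the statement of \cite[Proposition~2.2]{HY} as quoted in the paper: $\{\mathrm{ch}(Q_1(E),g^{Q_1(E)},d,-1/\tau)\}^{4i-1}=2^{N/2}\{\tau^{2i}\mathrm{ch}(Q_2(E),g^{Q_2(E)},d,\tau)\}^{4i-1}$, etc.

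Consequently your final line -- ``$\mathrm{ch}(Q_j(E),\ldots,-1/\tau)=2^{\epsilon_j N/2}\mathrm{ch}(Q_\mu(E),\ldots,\tau)$'' with no $\tau$-power, and then ``multiplying by the $S$-action on $\mathcal{L}_\lambda'$ from Lemma~3.4 producing the factor $\tau^{2k}$'' -- is where the gap is genuine. Lemma~3.4 gives $\tau^{2k}$ for the top-degree piece over a $4k$-dimensional manifold; here $\dim M=4k-1$, the fixed submanifolds are odd-dimensional, and the even characteristic-form part of the integrand alone cannot produce $\tau^{2k}$. The exponent $\tau^{2k}$ in \eqref{eq:4.22}--\eqref{eq:4.24} is the product of a factor $\tau^{2(k-i)}$ from the even forms in a given degree and the factor $\tau^{2i}$ from the degree-$(4i-1)$ piece of the odd Chern character. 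Your version makes the equation balance only by accident, because you simultaneously dropped the $\tau^{2i}$ from the odd piece and inflated the even piece to $\tau^{2k}$. To repair the argument you should either cite \cite[Proposition~2.2]{HY} directly, as the paper does, or carry out the degree-by-degree $\tau$-scaling correctly.
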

\begin{proof}
By Proposition 2.2 in \cite{HY}, if $c_3(E,g,d)=0$, then for any integer $i\geq 1$,
\begin{equation}
  \left\{{\rm ch}\left(Q_1(E),g^{Q_1(E)},d,-\frac{1}{\tau}\right)\right\}^{4i-1}=2^{N/2}\left\{\tau^{2i}{\rm ch}\left(Q_2(E),g^{Q_2(E)},d,\tau\right)\right\}^{4i-1},
\end{equation}
\begin{equation}
  \left\{{\rm ch}\left(Q_2(E),g^{Q_2(E)},d,-\frac{1}{\tau}\right)\right\}^{4i-1}=2^{-N/2}\left\{\tau^{2i}{\rm ch}\left(Q_1(E),g^{Q_1(E)},d,\tau\right)\right\}^{4i-1},
\end{equation}
\begin{equation}
  \left\{{\rm ch}\left(Q_3(E),g^{Q_3(E)},d,-\frac{1}{\tau}\right)\right\}^{4i-1}=\left\{\tau^{2i}{\rm ch}\left(Q_3(E),g^{Q_3(E)},d,\tau\right)\right\}^{4i-1}.
\end{equation}
Combining the condition $3p_1(L)_{S^1}+p_1(V)_{S^1}=p_1(TM)_{S^1}$ with \eqref{eq:3.26}--\eqref{eq:3.33}, we obtain \eqref{eq:4.22}. Formulas \eqref{eq:4.23} and \eqref{eq:4.24} can be verified similarly.

For (2), we use the transformation laws of Jacobi theta-functions under the action of $T$ and Proposition 2.2 in \cite{HY}, which easily yield \eqref{eq:4.25}, \eqref{eq:4.26} and \eqref{eq:4.27}.
\end{proof}

\begin{lem}
  For any function $\mathcal{F}_\lambda, \ \ \lambda=\{1,2,3\}$, its modular transformation is holomorphic in $(t,\tau)\in\mathbf{R}\times\mathbf{H}$.
\end{lem}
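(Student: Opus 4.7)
The plan is to mirror the proof of Lemma 3.5 (and Lemma 3.10), which itself is modeled on [12, Lemma 2.3] of Han and Yu, adapting it to the odd-dimensional Toeplitz setting by using Proposition 4.2 in place of the Lefschetz formula for the even-dimensional Dirac case.

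The starting point is that each twisted Toeplitz operator $\mathcal{T}_c\otimes\Theta(T_{\mathbf{C}}M,L_{\bf{R}}\otimes\mathbf{C})\otimes Q_\lambda(V_{\mathbf{C}})\otimes(Q_\lambda(E),g^{Q_\lambda(E)})$ is $S^1$-equivariant, using essentially the $S^1$-invariance of $g$ displayed in (4.17). Thus for $h=e^{2\pi\mathbf{i}t}\in S^1$, the Lefschetz number $\overline{\mathcal{L}}_\lambda(h;\tau)=\mathcal{F}_\lambda(t,\tau)$ is, for each fixed $\tau\in\mathbf{H}$, a character-type quantity built from the induced $S^1$-action, hence depends holomorphically on $t$ when $t\in\mathbf{R}$. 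By Proposition 4.2, $\mathcal{F}_\lambda(t,\tau)$ is presented as a sum over fixed-point components $F_\alpha$ of terms whose only possible $t$-singularities on $\mathbf{R}\times\mathbf{H}$ arise from the denominator factors $\theta(x_\beta^{i_\beta}+m_\beta t,\tau)$, and these can only occur at the isolated real values with $m_\beta t\in\mathbb{Z}$. Since the full sum is regular at such points by the character interpretation, the individual pole contributions must cancel across components, so $\mathcal{F}_\lambda$ extends holomorphically to all of $\mathbf{R}\times\mathbf{H}$.

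For the modular transform, I would use Lemma 4.4 together with the fact that $SL_2(\mathbb{Z})$ is generated by $S$ and $T$: under these generators the triple $(\mathcal{F}_1,\mathcal{F}_2,\mathcal{F}_3)$ is permuted up to a nonzero constant and a factor of $(c\tau+d)^{2k}$. Iterating, for any $g_2\in SL_2(\mathbb{Z})$ the composition $\mathcal{F}_\lambda\circ g_2$ is, up to such factors, one of $\mathcal{F}_1(t,\tau),\mathcal{F}_2(t,\tau),\mathcal{F}_3(t,\tau)$. Holomorphicity on $\mathbf{R}\times\mathbf{H}$ is therefore inherited by every modular transform.

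The main obstacle I anticipate is making the pole-cancellation step rigorous in the Toeplitz setting. The new ingredient compared to Lemma 3.5 is the odd Chern character factor ${\rm ch}(Q_\lambda(E),g^{Q_\lambda(E)},d,\tau)$, which does not appear in the closed even-dimensional case. The saving observation is that, because $g$ is $S^1$-invariant by (4.17), this factor restricted to each fixed component $F_\alpha$ carries no $t$-dependence and enters each Lefschetz contribution as a passive multiplier. Consequently, the cancellation of spurious poles reduces to the same theta-function identities already exploited in Lemma 3.5, and the argument proceeds essentially verbatim from there.
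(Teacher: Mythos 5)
Your proposal is correct and matches the paper's approach: the paper's proof simply defers (``Similar to Lemma 3.5,'' which in turn defers to [12, Lemma 2.3]), and your unpacking of that argument --- holomorphicity of the Lefschetz number on $\mathbf{R}\times\mathbf{H}$ from the character-theoretic interpretation, pole cancellation via Proposition 4.2, and propagation of holomorphicity to all modular transforms via Lemma 4.4 and the generators $S,T$, with the observation that the odd Chern character factor is $t$-independent by $S^1$-invariance of $g$ --- is exactly the right content. One small correction: reference [12] is Liu's \emph{On modular invariance and rigidity theorems}, not a paper of Han and Yu; Han and Yu's work is \cite{HY}, which supplies the odd-dimensional Toeplitz framework but not the specific Lemma 2.3 cited here.
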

\begin{proof}
Similar to Lemma 3.5.
\end{proof}

\textit{Proof of Theorem 4.1.} We prove that these $\mathcal{F}_\lambda$ are holomorphic on $\mathbf{C}\times\mathbf{H}$, which implies the rigidity of Theorem 4.1. Denote by $\mathcal{F}$ one of the functions $\{\mathcal{F}_1,\mathcal{F}_2,\mathcal{F}_3\}$. By \eqref{eq:4.19}, \eqref{eq:4.20} and \eqref{eq:4.21}, the possible poles of $\mathcal{F}(t,\tau)$ can be written in the form $t=\frac{k}{l}(c\tau+d)$ for integers $k,l,c,d$ with $(c, d)=1$.

We can always find integers $a,b$ such that $ad-bc=1$. Then the matrix $g_2=\left(\begin{array}{cc}
\ d & -b  \\
 -c  & a
\end{array}\right)\in SL_2(\mathbb{Z})$ induces an action
$$\mathcal{F}(g_1(t,\tau))=\mathcal{F}\left(\frac{t}{-c\tau+a},\frac{d\tau-b}{-c\tau+a}\right).$$
Now, if $t=\frac{k}{l}(c\tau+d)$ is a polar divisor of $\mathcal{F}(t,\tau)$, then one polar divisor of $\mathcal{F}(g_2(t,\tau))$ is given by
$$\frac{t}{-c\tau+a}=\frac{k}{l}\left(c\frac{d\tau-b}{-c\tau+a}+d\right),$$
which yields $t=\frac{k}{l}$.

By Lemma 4.4, we know that, up to a constant, $\mathcal{F}(g_1(t,\tau))$ is still one of $\{\mathcal{F}_1,\mathcal{F}_2,\mathcal{F}_3\}$. This contradicts Lemma 4.5, thus completing the proof of Theorem 4.1.

Let $M$ be a $(4k+1)$-dimensional spin$^c$ manifold. Set 
\begin{equation}
\begin{split}
    \overline{W}^*_1&(M,L,V)\\
    &=\left\{\widehat{A}(TM){\rm exp}(\frac{c}{2}){\rm ch}(\Theta^*(T_{\mathbf{C}}M,L_{\bf{R}}\otimes\mathbf{C})){\rm ch}(Q_1(V_{\mathbf{C}})){\rm ch}(Q_1(E),g^{Q_1(E)},d,\tau)\right\}^{4k+1},
\end{split}
\end{equation}
\begin{equation}
\begin{split}
    \overline{W}^*_2&(M,L,V)\\
    &=\left\{\widehat{A}(TM){\rm exp}(\frac{c}{2}){\rm ch}(\Theta^*(T_{\mathbf{C}}M,L_{\bf{R}}\otimes\mathbf{C})){\rm ch}(Q_2(V_{\mathbf{C}})){\rm ch}(Q_2(E),g^{Q_2(E)},d,\tau)\right\}^{4k+1},
\end{split}
\end{equation}
\begin{equation}
\begin{split}
    \overline{W}^*_3&(M,L,V)\\
    &=\left\{\widehat{A}(TM){\rm exp}(\frac{c}{2}){\rm ch}(\Theta^*(T_{\mathbf{C}}M,L_{\bf{R}}\otimes\mathbf{C})){\rm ch}(Q_3(V_{\mathbf{C}})){\rm ch}(Q_3(E),g^{Q_3(E)},d,\tau)\right\}^{4k+1}.
\end{split}
\end{equation}

Now assume that $M$ admits an $S^1$-action that lifts to an action on $L$. Moreover, assume that this action preserves the given spin$^c$-structure associated to $(M, L)$, as well as the metrics and connections involved. Let $V$ be equipped with an $S^1$-action that restricts on each $V$-fiber over $M$ to a linear action preserving that fiber. Then the associated twisted Dirac operator is $S^1$-equivariant. We then have the following rigidity theorems.

\begin{thm}
  For a $(4k+1)$-dimensional connected spin manifold with a non-trivial $S^1$-action, and assuming $g_*:\pi_1(M)\to\pi_1(SO(N))=\mathbb{Z}_2$ is trivial, if $$p_1(L)_{S^1}+p_1(V)_{S^1}=p_1(TM)_{S^1}$$ and $c_3(E,g,d)=0$, then the Toeplitz operators $$\mathcal{T}_c\otimes\Theta^*(T_{\mathbf{C}}M,L_{\bf{R}}\otimes\mathbf{C})\otimes Q_\lambda(V_{\mathbf{C}})\otimes(Q_\lambda(E),g^{Q_\lambda(E)}),~\lambda=1,2,3$$ are rigid.
\end{thm}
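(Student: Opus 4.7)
\noindent\textit{Proof proposal for Theorem 4.6.} The plan is to run the same four-step argument already used for Theorem 4.1, but with the Witten bundle $\Theta$ replaced by $\Theta^*$ and with the dimensional condition adjusted exactly as in the passage from Theorem 3.1 to Theorem 3.6. First I would compute the Lefschetz numbers $\overline{\mathcal{L}}^*_\lambda(g;\tau)$, $\lambda=1,2,3$, of the Toeplitz operators $\mathcal{T}_c\otimes\Theta^*(T_{\mathbf{C}}M,L_{\bf{R}}\otimes\mathbf{C})\otimes Q_\lambda(V_{\mathbf{C}})\otimes(Q_\lambda(E),g^{Q_\lambda(E)})$ by combining the Lefschetz fixed-point formula for Toeplitz operators from \cite{Wa} (as used in Proposition 4.2) with the equivariant Chern character computation of $\Theta^*$ from Proposition \ref{pro3.7}. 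The expressions will be identical to \eqref{eq:4.19}--\eqref{eq:4.21} except that the factor $\theta_1(u+\sigma t,\tau)\theta_2(u+\sigma t,\tau)\theta_3(u+\sigma t,\tau)/[\theta_1(0,\tau)\theta_2(0,\tau)\theta_3(0,\tau)]$ is replaced by $\sqrt{-1}\,\theta(u+\sigma t,\tau)/[\theta_1(0,\tau)\theta_2(0,\tau)\theta_3(0,\tau)]$. Viewing these as meromorphic functions $\mathcal{F}^*_\lambda(t,\tau)$ on $\mathbf{C}\times\mathbf{H}$, rigidity is equivalent to their $t$-independence.

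Second I would establish double periodicity analogous to Lemma \ref{le3.3}: for every $a\in 2\mathbb{Z}$, invariance under $t\mapsto t+a$ is immediate from $\theta_\mu(t+a,\tau)=\theta_\mu(t,\tau)$; invariance under $t\mapsto t+a\tau$ follows by substituting the quasi-periodicity laws $\theta_\mu(t+a\tau,\tau)=\epsilon_\mu^a e^{-2\pi\mathbf{i}(at+a^2\tau/2)}\theta_\mu(t,\tau)$ and checking that the exponent accumulated from the theta numerators and denominators cancels. Because $\Theta^*$ contributes only a single theta factor $\theta(u+\sigma t,\tau)$ in the numerator instead of the three factors $\theta_1\theta_2\theta_3$, the cancellation requires only the three identities on $u^2$, $u\sigma$, $\sigma^2$ extracted from $p_1(L)_{S^1}+p_1(V)_{S^1}=p_1(TM)_{S^1}$, rather than from $3p_1(L)_{S^1}+p_1(V)_{S^1}=p_1(TM)_{S^1}$. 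This is precisely why the hypothesis in Theorem 4.6 drops $3p_1(L)_{S^1}$ to $p_1(L)_{S^1}$.

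Third I would derive the modular transformation laws, blending Lemma 3.9 with Lemma 4.4. Under $S$, using the $S$-transformation of the Jacobi theta functions from the proof of Lemma \ref{le3.4} together with Proposition 2.2 of \cite{HY} (which is where $c_3(E,g,d)=0$ enters), I expect
\begin{equation}
\mathcal{F}^*_1\!\left(\tfrac{t}{\tau},-\tfrac{1}{\tau}\right)=2^{\bar{l}-\bar{r}+N/2}\tau^{s+\bar{l}-1}\mathcal{F}^*_2(t,\tau),\nonumber
\end{equation}
\begin{equation}
\mathcal{F}^*_2\!\left(\tfrac{t}{\tau},-\tfrac{1}{\tau}\right)=2^{-(\bar{l}-\bar{r}+N/2)}\tau^{s+\bar{l}-1}\mathcal{F}^*_1(t,\tau),\nonumber
\end{equation}
\begin{equation}
\mathcal{F}^*_3\!\left(\tfrac{t}{\tau},-\tfrac{1}{\tau}\right)=\tau^{s+\bar{l}-1}\mathcal{F}^*_3(t,\tau);\nonumber
\end{equation}
the $\tau$-weight is $s+\bar{l}-1$ rather than $2k$, exactly as in Lemma 3.9, because $\Theta^*$ contributes one theta numerator instead of three. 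Under $T$, $\mathcal{F}^*_1$ is fixed while $\mathcal{F}^*_2$ and $\mathcal{F}^*_3$ are interchanged, by the same argument as for \eqref{eq:3.23}--\eqref{eq:3.25}. I would then invoke the analog of Lemma 4.5 (whose proof mimics \cite[Lemma 2.3]{HY}) to conclude that each $\mathcal{F}^*_\lambda$ is holomorphic on $\mathbf{R}\times\mathbf{H}$.

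Finally I would run the Liu-type pole-moving argument verbatim: by the shape of $\overline{\mathcal{L}}^*_\lambda$, any potential polar divisor lies on a line $t=\tfrac{k}{l}(c\tau+d)$ with $(c,d)=1$; choosing $a,b$ with $ad-bc=1$ and applying $g=\left(\begin{smallmatrix} d & -b \\ -c & a \end{smallmatrix}\right)$ moves this divisor to the line $t=k/l\subset\mathbf{R}\times\mathbf{H}$, on which the conjugate function is holomorphic by the previous step. Since the conjugate function is, up to a nonzero constant and a power of $\tau$, again one of $\{\mathcal{F}^*_1,\mathcal{F}^*_2,\mathcal{F}^*_3\}$, this is a contradiction unless there are no poles at all; combined with ellipticity in $t$ this forces $t$-independence. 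The main obstacle, and really the only nontrivial bookkeeping, is verifying the precise $\tau$-weights and the $2^{\pm(\bar{l}-\bar{r}+N/2)}$ constants in the $S$-transformation, which come from the interaction between the $e^{\pi\mathbf{i}\tau(\,\cdot\,)^2}$ factors in the Jacobi $S$-laws, the condition $p_1(L)_{S^1}+p_1(V)_{S^1}=p_1(TM)_{S^1}$, and the modular weight of the odd Chern character provided by Proposition 2.2 of \cite{HY}.
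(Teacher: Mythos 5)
Your proposal matches the paper's argument step for step: the paper proves Theorem 4.6 via Proposition 4.7 (Lefschetz numbers with the $\theta_1\theta_2\theta_3$ factor replaced by $\sqrt{-1}\,\theta(u+\sigma t,\tau)$), Lemma 4.8 (double periodicity under the hypothesis $p_1(L)_{S^1}+p_1(V)_{S^1}=p_1(TM)_{S^1}$), Lemma 4.9 ($S$- and $T$-transformation laws, using Proposition 2.2 of \cite{HY} for the odd Chern character), Lemma 4.10 (holomorphy on $\mathbf{R}\times\mathbf{H}$), and then the pole-moving argument of Liu, exactly as you describe, and your explanation of why the factor $3$ in the $p_1$ condition drops to $1$ for $\Theta^*$ is the correct reason. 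The only discrepancy is the modular weight in the $S$-law: you write $\tau^{s+\bar{l}-1}$ following Lemma 3.9, while the paper's Lemma 4.9 writes $\tau^{2k}$ (the paper is internally inconsistent between these two lemmas), but since the exact $\tau$-power is immaterial to the pole-moving argument, this does not affect the validity of either proof.
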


To prove this rigidity theorem, we first calculate the corresponding Lefschetz numbers.
\begin{prop}
 The Lefschetz numbers of $\mathcal{T}_c\otimes\Theta^*(T_{\mathbf{C}}M,L_{\bf{R}}\otimes\mathbf{C})\otimes Q_\lambda(V_{\mathbf{C}}),~\lambda=1,2,3$ are
\begin{equation}\label{eq:4.34}
  \begin{split}
  \overline{\mathcal{L}}^*_1(g;\tau)=&2^{\bar{l}-\bar{r}}\left(\frac{-\mathbf{i}}{\pi}\right)^{\bar{r}}
  \sum_{\alpha}\int_{F_\alpha}\left(\prod^{s}_{j=1}
  y_j\frac{\theta'(0,\tau)}{\theta(y_j,\tau)}\prod^{r}_{\beta=1}
  \prod^{\dim\mathbf{N}_\beta}_{i_\beta=1}\frac{\theta'(0,\tau)}{\theta(x_{\beta}^{i_\beta}+
  m_{\beta}t,\tau)}\right)\\
  &\cdot\frac{\sqrt{-1}\theta(u+\sigma t,\tau)}{\theta_1(0,\tau)\theta_2(0,\tau)
  \theta_3(0,\tau)}\cdot\prod_{\nu=0}^l\prod_{h_\nu=1}^{\dim V_\nu}\frac{\theta_1(z_\nu^{h_\nu}+n_\nu t,\tau)}{\theta_1(0,\tau)}{\rm ch}(Q_1(E),g^{Q_1(E)},d,\tau),
  \end{split}
\end{equation}
\begin{equation}\label{eq:4.35}
  \begin{split}
  \overline{\mathcal{L}}^*_2(g;\tau)=&-\left(\frac{\mathbf{i}}{2\pi}\right)^{\bar{r}}
  \sum_{\alpha}\int_{F_\alpha}\left(\prod^{s}_{j=1}
  y_j\frac{\theta'(0,\tau)}{\theta(y_j,\tau)}\prod^{r}_{\beta=1}
  \prod^{\dim\mathbf{N}_\beta}_{i_\beta=1}\frac{\theta'(0,\tau)}{\theta(x_{\beta}^{i_\beta}+
  m_{\beta}t,\tau)}\right)\\
  &\cdot\frac{\sqrt{-1}\theta(u+\sigma t,\tau)}{\theta_1(0,\tau)\theta_2(0,\tau)
  \theta_3(0,\tau)}\cdot\prod_{\nu=0}^l\prod_{h_\nu=1}^{\dim V_\nu}\frac{\theta_2(z_\nu^{h_\nu}+n_\nu t,\tau)}{\theta_2(0,\tau)}{\rm ch}(Q_2(E),g^{Q_1(E)},d,\tau),
  \end{split}
\end{equation}
\begin{equation}\label{eq:4.36}
  \begin{split}
  \overline{\mathcal{L}}^*_3(g;\tau)=&-\left(\frac{\mathbf{i}}{2\pi}\right)^{\bar{r}}
  \sum_{\alpha}\int_{F_\alpha}\left(\prod^{s}_{j=1}
  y_j\frac{\theta'(0,\tau)}{\theta(y_j,\tau)}\prod^{r}_{\beta=1}
  \prod^{\dim\mathbf{N}_\beta}_{i_\beta=1}\frac{\theta'(0,\tau)}{\theta(x_{\beta}^{i_\beta}+
  m_{\beta}t,\tau)}\right)\\
  &\cdot\frac{\sqrt{-1}\theta(u+\sigma t,\tau)}{\theta_1(0,\tau)\theta_2(0,\tau)
  \theta_3(0,\tau)}\cdot\prod_{\nu=0}^l\prod_{h_\nu=1}^{\dim V_\nu}\frac{\theta_3(z_\nu^{h_\nu}+n_\nu t,\tau)}{\theta_3(0,\tau)}{\rm ch}(Q_3(E),g^{Q_1(E)},d,\tau).
  \end{split}
\end{equation}
\end{prop}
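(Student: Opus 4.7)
The plan is to mimic the derivations of Propositions 3.7 and 4.2 and splice them together, since the present proposition simply combines the $\Theta^*$-bundle features of the former with the additional odd Chern character factor ${\rm ch}(Q_\lambda(E), g^{Q_\lambda(E)}, d, \tau)$ of the latter. First I would apply the Lefschetz fixed point formula for twisted Toeplitz operators on an odd-dimensional spin$^c$ manifold as in \cite{Wa}, which expresses $\overline{\mathcal L}^*_\lambda(g;\tau)$ as a sum over the fixed components $F_\alpha$ of integrals with the standard spin$^c$ Lefschetz integrand---built from $\widehat A(TF_\alpha)$, the normal-bundle Pfaffian, $e^{c/2 + \pi\sqrt{-1}\sigma t}$, and ${\rm ch}_g(\Theta^*(T_{\mathbf C}M, L_{\mathbf R}\otimes\mathbf C) \otimes Q_\lambda(V_{\mathbf C}))$---multiplied by the transgressed form ${\rm ch}(Q_\lambda(E), g^{Q_\lambda(E)}, d, \tau)$.

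Because $g\colon M\to GL(N,\mathbf C)$ is $S^1$-invariant by hypothesis, $g^{-1}dg$ is horizontal for the $S^1$-action and the transgressed form restricts cleanly to each $F_\alpha$, appearing as a plain multiplicative factor exactly as in the proof of Proposition 4.2. Next I would substitute the theta-function identities already established in the proof of Proposition 3.7. The tangent/normal-bundle and $L$-bundle contributions assemble into
\begin{equation*}
\prod_{j=1}^{s} y_j\frac{\theta'(0,\tau)}{\theta(y_j,\tau)}\prod_{\beta,i_\beta}\frac{\theta'(0,\tau)}{\theta(x_\beta^{i_\beta}+m_\beta t,\tau)}\cdot\frac{\sqrt{-1}\,\theta(u+\sigma t,\tau)}{\theta_1(0,\tau)\theta_2(0,\tau)\theta_3(0,\tau)},
\end{equation*}
which is the characteristic feature of $\Theta^*$ (a single $\theta$-factor in $u+\sigma t$ rather than the triple product $\theta_1\theta_2\theta_3$ appearing in the $\Theta$-case). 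The $Q_\lambda(V_{\mathbf C})$ factor produces the $\theta_\lambda(z_\nu^{h_\nu}+n_\nu t,\tau)/\theta_\lambda(0,\tau)$ contribution, and the prefactors $2^{\bar l-\bar r}(-\mathbf i/\pi)^{\bar r}$ for $\lambda=1$ and $-(\mathbf i/2\pi)^{\bar r}$ for $\lambda=2,3$ arise from the same bookkeeping of the $\Delta(V)$-dimension and of the factors $1/(2\mathbf i\sin(\pi(x_\beta^{i_\beta}+m_\beta t)))$ in the normal-bundle Pfaffian as in Propositions 3.2 and 3.7.

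Multiplying these theta-function pieces by ${\rm ch}(Q_\lambda(E), g^{Q_\lambda(E)}, d, \tau)$ yields the three displayed identities \eqref{eq:4.34}--\eqref{eq:4.36}. I do not anticipate a genuine obstacle, since every algebraic step is already present in the paper; in particular no new modular or transformation properties are required at this stage. The only point that requires some care is confirming that the odd Chern character form factors out of the fixed-point localisation as a plain multiplicative factor on $F_\alpha$, and this follows from the $S^1$-invariance hypothesis on $g$ exactly as exploited in \cite{HY} and in the proof of Proposition 4.2.
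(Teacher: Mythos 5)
Your proposal is correct and follows essentially the same route as the paper: apply the Lefschetz fixed point formula from \cite{Wa}, reuse the theta-function computations from Propositions 3.2 and 3.7 for the $\Theta^*$- and $Q_\lambda(V_{\mathbf C})$-contributions, and use the $S^1$-invariance of $g$ together with (4.11)--(4.12) to pull out the odd Chern character factor ${\rm ch}(Q_\lambda(E),g^{Q_\lambda(E)},d,\tau)$. The paper's own proof is just a one-line reference to ``Similar to Proposition 3.7 \dots\ using the Lefschetz fixed point formula (cf.\ \cite{Wa}), (4.11) and (4.12),'' so your more explicit account is a faithful expansion of the same argument.
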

\begin{proof}
Similar to Proposition 3.7, we can calculate equations \ref{eq:4.34}, \ref{eq:4.35} and \ref{eq:4.36} by using the Lefschetz fixed point formula (cf. \cite{Wa}), (4.11) and (4.12).
\end{proof}

Next, we view the above expressions as defining functions $\mathcal{F}^*_\lambda(t,\tau)$ for each $\lambda\in\{1,2,3\}$, i.e., $\mathcal{F}^*_\lambda(t,\tau)=\overline{\mathcal{L}}^*_\lambda(g;\tau)$. Similar to Lemma 4.3, we have the following lemma.

\begin{lem}
Let $(t,\tau)\in\mathbf{C}\times\mathbf{H}$ be in the domain of $\mathcal{F}^*_\lambda(t,\tau)$.\\
(1) Then $\mathcal{F}^*_\lambda(t,\tau)=\mathcal{F}^*_\lambda(t+a,\tau)$ for any $a\in 2\mathbb{Z}$.\\
(2) If $p_1(L)_{S^1}+p_1(V)_{S^1}=p_1(TM)_{S^1}$, then $\mathcal{F}^*_\lambda(t,\tau)=\mathcal{F}^*_\lambda(t+a\tau,\tau)$ for any $a\in 2\mathbb{Z}$.
\end{lem}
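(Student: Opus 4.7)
The strategy is to follow the template of Lemma 3.3 (the analogous $4k$-dimensional statement for $\Theta$) and Lemma 4.3 (the analogous $(4k-1)$-dimensional Toeplitz statement), with two modifications suited to the present $(4k+1)$-dimensional $\Theta^*$ setting. The first observation is structural: the factor ${\rm ch}(Q_\lambda(E),g^{Q_\lambda(E)},d,\tau)$ appearing in $\overline{\mathcal L}^*_\lambda$ carries no dependence on the equivariant parameter $t$, since both the map $g$ and the path of connections $\nabla_{\bar u}$ are built from $S^1$-invariant data on $M$. Consequently, all $t$-dependence in $\mathcal F^*_\lambda(t,\tau)$ is concentrated in the theta-function factors coming from $\Theta^*(T_{\mathbf C}M,L_{\mathbf R}\otimes \mathbf C)$, from $Q_\lambda(V_{\mathbf C})$, and from the single factor $\sqrt{-1}\,\theta(u+\sigma t,\tau)$. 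It therefore suffices to verify the two periodicity properties on these theta products alone.

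For part (1), every theta argument has the form $w+kt$ with $w\in\{x_\beta^{i_\beta},y_j,z_\nu^{h_\nu},u\}$ and $k\in\{m_\beta,0,n_\nu,\sigma\}\subset\mathbb Z$. A shift $t\mapsto t+a$ with $a\in 2\mathbb Z$ moves each such argument by the even integer $ka$. Applying the first-column transformation formulas from Lemma 3.3 an even number of times, the sign changes for $\theta$ and $\theta_1$ cancel in pairs while $\theta_2,\theta_3$ are already period-one. Each theta factor is thus unchanged, so $\mathcal F^*_\lambda(t+a,\tau)=\mathcal F^*_\lambda(t,\tau)$.

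For part (2), applying the second-column formulas $\theta_\mu(w+a\tau,\tau)=\varepsilon_\mu^{a}e^{-2\pi\mathbf i a(w+a\tau/2)}\theta_\mu(w,\tau)$ (with $\varepsilon_\mu=\pm 1$, and all signs equal to $+1$since $a$ is even) to every theta factor in $\mathcal F^*_\lambda(t+a\tau,\tau)$, and then gathering the exponential phases across the full product, produces an overall phase $\exp(-2\pi\mathbf i a\,\Phi_\alpha(t,\tau))$ on each component $F_\alpha$, where $\Phi_\alpha$ is an affine-quadratic expression in $t$ whose coefficients are linear combinations of
$$A_1 := u^2 + \sum_\nu\sum_{h_\nu}(z_\nu^{h_\nu})^2 - \sum_j y_j^2 - \sum_\beta\sum_{i_\beta}(x_\beta^{i_\beta})^2,$$
$$A_2 := u\sigma + \sum_\nu\sum_{h_\nu} z_\nu^{h_\nu} n_\nu - \sum_\beta\sum_{i_\beta} x_\beta^{i_\beta} m_\beta,$$
$$A_3 := \sigma^2 + \sum_\nu\sum_{h_\nu} n_\nu^2 - \sum_\beta\sum_{i_\beta} m_\beta^2.$$
The coefficient in front of the $u$-contributions is $1$, not $3$, precisely because $\Theta^*$ contains a single $L_{\mathbf R}$-factor producing $\theta(u+\sigma t,\tau)$ rather than the triple $\theta_1\theta_2\theta_3$ arising from $\Theta$; this is the reason the hypothesis reads $p_1(L)_{S^1}+p_1(V)_{S^1}=p_1(TM)_{S^1}$ rather than with a coefficient $3$ on $p_1(L)_{S^1}$.

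The hypothesis, restricted to each component $F_\alpha$ and decomposed according to powers of the generator $\bar t\in H^2_{S^1}(\mathrm{pt})$, gives exactly $A_1=A_2=A_3=0$. Hence $\Phi_\alpha\equiv 0$ and $\mathcal F^*_\lambda(t+a\tau,\tau)=\mathcal F^*_\lambda(t,\tau)$ follows component-wise. The main obstacle is bookkeeping the exponential phases arising from all the $\theta_\mu$-transformations, but this is a direct parallel of the corresponding step in Lemma 3.3 with the coefficient $3$ replaced by $1$; the $t$-independence of ${\rm ch}(Q_\lambda(E),g^{Q_\lambda(E)},d,\tau)$ means that no additional phase analysis is needed for the odd Chern character factor.
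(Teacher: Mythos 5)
Your proposal is correct and follows essentially the same route as the paper, whose proof of this lemma is simply the one-line remark ``Similar to Lemma 3.3.'' You correctly identify the two adaptations the paper leaves implicit: the odd Chern character factor ${\rm ch}(Q_\lambda(E),g^{Q_\lambda(E)},d,\tau)$ is $t$-independent (since $g$ is $S^1$-invariant) and so plays no role in the periodicity check, and the single $\theta(u+\sigma t,\tau)$ factor arising from $\Theta^*$ (versus the triple $\theta_1\theta_2\theta_3$ for $\Theta$) is exactly what replaces the coefficient $3$ on $p_1(L)_{S^1}$ by $1$.
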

\begin{proof}
Similar to Lemma \ref{le3.3}.
\end{proof}

Next, we prove that these $\mathcal{F}^*_\lambda$ are holomorphic in $(t,\tau)$ on $\mathbf{C}\times\mathbf{H}$.
\begin{lem}
  (1) Under the assumptions $p_1(L)_{S^1}+p_1(V)_{S^1}=p_1(TM)_{S^1}$ and $c_3(E,g,d)=0$, the action of $S$ on $\mathcal{F}_\lambda$ yields
  \begin{equation}\label{eq:4.22}
    \mathcal{F}^*_1\left(\frac{t}{\tau},-\frac{1}{\tau}\right)=2^{\bar{l}-\bar{r}+N/2}\tau^{2k}
    \mathcal{F}^*_2(t,\tau),
  \end{equation}
  \begin{equation}\label{eq:4.23}
    \mathcal{F}^*_2\left(\frac{t}{\tau},-\frac{1}{\tau}\right)=2^{-(\bar{l}-\bar{r}+N/2)}\tau^{2k}
    \mathcal{F}^*_1(t,\tau),
  \end{equation}
  \begin{equation}\label{eq:4.24}
    \mathcal{F}^*_3\left(\frac{t}{\tau},-\frac{1}{\tau}\right)=\tau^{2k}\mathcal{F}^*_3(t,\tau).
  \end{equation}
  (2) Under the action of $T$ on $\mathcal{F}_\lambda$,
  \begin{equation}\label{eq:4.25}
    \mathcal{F}^*_1(t,\tau+1)=\mathcal{F}^*_1(t,\tau),
  \end{equation}
   \begin{equation}\label{eq:4.26}
    \mathcal{F}^*_2(t,\tau+1)=\mathcal{F}^*_3(t,\tau),
  \end{equation}
   \begin{equation}\label{eq:4.27}
    \mathcal{F}^*_3(t,\tau+1)=\mathcal{F}^*_2(t,\tau).
  \end{equation}
\end{lem}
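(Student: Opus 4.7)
The plan is to mimic the argument of Lemma~4.4, with two structural changes reflecting the passage from dimension $4k-1$ to dimension $4k+1$: the spin${}^c$ factor in the numerator is now $\sqrt{-1}\,\theta(u+\sigma t,\tau)$ rather than $\theta_1\theta_2\theta_3(u+\sigma t,\tau)$, and correspondingly the needed characteristic class identity drops from $3p_1(L)_{S^1}+p_1(V)_{S^1}=p_1(TM)_{S^1}$ to $p_1(L)_{S^1}+p_1(V)_{S^1}=p_1(TM)_{S^1}$. Starting from the explicit formulas \eqref{eq:4.34}--\eqref{eq:4.36} of Proposition~4.7, I would apply the theta-function transformation laws listed in Lemma~3.4 factor by factor, combine with the Han--Yu transformation rules for ${\rm ch}(Q_j(E),g^{Q_j(E)},d,\tau)$ (Proposition~2.2 of \cite{HY}) that require $c_3(E,g,d)=0$, and finally use the $p_1$ identity to cancel the resulting exponential prefactors.

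For part (1), under $S$ each Chern root $y_j,\ x_\beta^{i_\beta},\ z_\nu^{h_\nu}$ gets rescaled by $\tau$ and an exponential factor of the form $e^{-\pi\mathbf{i}\tau(\cdot)^2}$ appears, while the single $\theta(u+\sigma t/\tau,-1/\tau)$ contributes just one exponential $e^{\pi\mathbf{i}\tau(u+\sigma t/\tau)^2}$ (this is exactly why we drop the coefficient of $p_1(L)_{S^1}$ from $3$ to $1$). Expanding and matching the quadratic, linear, and constant parts in $t$, the assumed equation $p_1(L)_{S^1}+p_1(V)_{S^1}=p_1(TM)_{S^1}$ splits on each fixed component $F_\alpha$ into the three relations
\begin{equation}
u^2+\sum_{\nu,h_\nu}(z_\nu^{h_\nu})^2=\sum_j y_j^2+\sum_{\beta,i_\beta}(x_\beta^{i_\beta})^2,\nonumber
\end{equation}
\begin{equation}
u\sigma+\sum_{\nu,h_\nu}z_\nu^{h_\nu}n_\nu=\sum_{\beta,i_\beta}x_\beta^{i_\beta}m_\beta,\quad\sigma^2+\sum_{\nu,h_\nu}n_\nu^2=\sum_{\beta,i_\beta}m_\beta^2,\nonumber
\end{equation}
and these are exactly what is needed to annihilate the combined exponential. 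The swap $\theta_1\!\leftrightarrow\!\theta_2$ (with $\theta_3\!\to\!\theta_3$) then converts $\mathcal{F}^*_1\!\leftrightarrow\!\mathcal{F}^*_2$ and fixes $\mathcal{F}^*_3$; the Han--Yu rule contributes the extra $2^{N/2}$ (respectively $2^{-N/2}$, $1$) and a $\tau^{2k}$ factor from the degree $4k+1$ piece of ${\rm ch}(Q_j(E),g^{Q_j(E)},d,\tau)$, giving the formulas \eqref{eq:4.22}--\eqref{eq:4.24}.

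For part (2), the argument is much lighter: under $T$ one has $\theta\to e^{\pi\mathbf{i}/4}\theta$, $\theta_1\to e^{\pi\mathbf{i}/4}\theta_1$, $\theta_2\to\theta_3$, $\theta_3\to e^{\pi\mathbf{i}/4}\theta_2$, and $\theta'(0,\tau+1)=e^{\pi\mathbf{i}/4}\theta'(0,\tau)$. All $e^{\pi\mathbf{i}/4}$ factors cancel in the ratios $\theta'/\theta$ and $\theta_\mu(\cdot)/\theta_\mu(0)$, while the swap $\theta_2\leftrightarrow\theta_3$ produces the permutation $\mathcal{F}^*_2\leftrightarrow\mathcal{F}^*_3$ with $\mathcal{F}^*_1$ fixed. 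The odd Chern character terms transform in the same permutation pattern by the other half of [HY, Prop.~2.2], so that the overall identities \eqref{eq:4.25}--\eqref{eq:4.27} follow.

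The main obstacle I anticipate is the bookkeeping of the overall $\tau$-power: one must carefully count $\tau^s$ from the $y_j$ factors, $\tau^{\bar r}$ from the normal-bundle factors, the $\tau^{-1}$ produced by the mismatch between a single $\theta(u+\sigma t/\tau,-1/\tau)$ in the numerator and three $\theta_j(0,-1/\tau)$ in the denominator, and the $\tau^{2i}$ coming from the degree $4i-1$ (here $i=k+1$, shifted by dimension reasons) component of the Han--Yu transformation, and verify that on restricting the whole integrand to its degree $4k+1$ part the total power collapses to $\tau^{2k}$. Once this reconciliation is done, the rest is the routine substitution already illustrated in Lemmas~3.4, 3.10, and~4.4, and we may invoke them with the obvious modifications.
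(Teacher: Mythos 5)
Your proposal is correct and essentially the same as the paper's argument: the paper's own proof of Lemma~4.9 is simply the remark "Similar to Lemma~4.4," and you reconstruct exactly what that one-line remark encodes --- the factor-by-factor application of the theta transformation laws from Lemma~3.4, the degree-by-degree Han--Yu transformation for ${\rm ch}(Q_j(E),g^{Q_j(E)},d,\tau)$ under the hypothesis $c_3(E,g,d)=0$, and the use of the weaker $p_1$ identity (coefficient $1$ rather than $3$, since only one factor $\theta(u+\sigma t,\tau)$ appears in the spin${}^c$ numerator rather than $\theta_1\theta_2\theta_3(u+\sigma t,\tau)$) to kill the exponential prefactors. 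One small inaccuracy: your parenthetical "(here $i=k+1$, shifted by dimension reasons)" is misleading, since ${\rm ch}(Q_j(E),g^{Q_j(E)},d,\tau)$ is an inhomogeneous odd form and the Han--Yu rule must be applied to each degree-$(4i-1)$ component separately; what actually happens (and what your final paragraph correctly gestures at) is that pairing the degree-$(4i-1)$ Toeplitz piece (weight $\tau^{2i}$) with the complementary degree-$(4(k-i)+2)$ Dirac-type piece (weight $\tau^{2(k-i)}$) always totals $\tau^{2k}$, uniformly in $i$.
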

\begin{proof}
Similar to Lemma 4.4.
\end{proof}

\begin{lem}
  For any function $\mathcal{F}^*_\lambda, \ \ \lambda=\{1,2,3\}$, its modular transformation is holomorphic in $(t,\tau)\in\mathbf{R}\times\mathbf{H}$.
\end{lem}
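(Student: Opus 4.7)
The plan is to follow, mutatis mutandis, the argument of Lemma 4.5, replacing the Lefschetz fixed-point formula by its odd-dimensional $(4k+1)$ version from Proposition 4.7 and Wang~\cite{Wa}. Three ingredients are needed.

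First, I locate the candidate polar set of each $\mathcal{F}^*_\lambda(t,\tau)$ on $\mathbf{C}\times\mathbf{H}$. From the explicit expressions \eqref{eq:4.34}--\eqref{eq:4.36}, the only denominators that can vanish are the theta factors $\theta(x_\beta^{i_\beta}+m_\beta t,\tau)$ together with $\theta_\mu(0,\tau)$ for $\mu=1,2,3$. The latter are nowhere vanishing on $\mathbf{H}$, and the former, after expansion in the nilpotent Chern roots $x_\beta^{i_\beta}$, vanish precisely on the affine lines $m_\beta t\in\mathbb{Z}+\tau\mathbb{Z}$. These lines intersect $\mathbf{R}\times\mathbf{H}$ only at the isolated rational values $t=n/m_\beta$ with $n\in\mathbb{Z}$.

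Second, I interpret the $q$-coefficients of $\mathcal{F}^*_\lambda$ as honest equivariant Lefschetz numbers of $S^1$-equivariant Toeplitz operators, via the $(4k+1)$-dimensional spin$^c$ Lefschetz formula of Proposition 4.7 together with Proposition 2.2 of \cite{HY}. Each coefficient of $q^n$ in the $q$-expansion of $\mathcal{F}^*_\lambda(t,\tau)$ is then a finite $\mathbb{Z}$-linear combination of characters $e^{2\pi\mathbf{i}kt}$ of $S^1$, hence real-analytic in $t\in\mathbf{R}$. Consequently the apparent poles at $t=n/m_\beta$ contributed by any single fixed-point component $F_\alpha$ must be cancelled by contributions from the remaining components, so that $\mathcal{F}^*_\lambda$ is in fact holomorphic on an open neighborhood of $\mathbf{R}\times\mathbf{H}$ in $\mathbf{C}\times\mathbf{H}$.

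Third, Lemma 4.9 expresses the actions of the generators $S,T$ of $SL_2(\mathbb{Z})$ on the family $\{\mathcal{F}^*_1,\mathcal{F}^*_2,\mathcal{F}^*_3\}$ as permutations multiplied by the nonvanishing factor $2^{\pm(\bar{l}-\bar{r}+N/2)}\tau^{2k}$. Since $SL_2(\mathbb{Z})=\langle S,T\rangle$, every modular transform of an $\mathcal{F}^*_\lambda$ is, up to a factor that is holomorphic and nowhere zero on $\mathbf{H}$, again one of the three $\mathcal{F}^*_\mu$. Combined with the second step, this yields holomorphicity of the modular transform on a neighborhood of $\mathbf{R}\times\mathbf{H}$. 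The principal difficulty is the second step: rigorously identifying the $q$-coefficients of $\mathcal{F}^*_\lambda$ with equivariant Lefschetz numbers of Toeplitz operators on an odd-dimensional spin$^c$ (rather than spin) manifold, which requires adapting the framework of \cite{HY} to the present setting; once that adaptation is available, the cancellation of poles on $\mathbf{R}\times\mathbf{H}$ follows by the same formal argument as in Liu~\cite{Li2} and in Lemma 4.5.
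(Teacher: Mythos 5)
Your proposal follows essentially the same route as the paper: the paper's proof of this lemma is just a pointer to Lemma 3.5, which in turn defers to Liu's Lemma 2.3 in \cite{Li2}, and that argument is precisely the one you reproduce — identify the $q$-coefficients of $\mathcal{F}^*_\lambda$ as equivariant Lefschetz numbers (via the odd-dimensional spin$^c$ fixed-point formula of Proposition 4.7 and the framework of \cite{HY}), hence finite $\mathbb{Z}$-linear combinations of characters $e^{2\pi\mathbf{i}kt}$ with no poles on $\mathbf{R}\times\mathbf{H}$, and then invoke the $S,T$-transformation formulas of Lemma 4.9 to carry this over to any modular transform of the $\mathcal{F}^*_\lambda$. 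Your honest flag on the spin$^c$ adaptation of \cite{HY} is a real gap, but it is equally present in the paper's own one-line proof, so the proposal is faithful to the intended argument.
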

\begin{proof}
Similar to Lemma 3.5.
\end{proof}

Next, similar to the proof of Theorem 4.1, we use Liu's method \cite{Li2} to prove Theorem 4.6 via Proposition 4.7, Lemma 4.8 and Lemma 4.9.

In the odd-dimensional case, using Liu's method \cite{Li2} and Han and Yu's method \cite{HY}, we obtain the following theorem.
\begin{thm}
 (i) For a $(4k-1)$-dimensional connected spin$^c$ manifold with a non-trivial $S^1$-action, and assuming $g_*:\pi_1(M)\to\pi_1(SO(N))=\mathbb{Z}_2$ is trivial, if $$3p_1(L)_{S^1}+3p_1(V)_{S^1}=p_1(TM)_{S^1}$$ and $c_3(E,g,d)=0$, then the twisted Toeplitz operators $$\mathcal{T}_c\otimes\Theta(T_{\mathbf{C}}M,L_{\bf{R}}\otimes\mathbf{C})\otimes Q(V_{\mathbf{C}})\otimes(Q_\lambda(E),g^{Q_\lambda(E)}),~\lambda=1,2,3$$ are rigid.
 
 (ii) For a $(4k+1)$-dimensional connected spin$^c$ manifold with a non-trivial $S^1$-action, and assuming $g_*:\pi_1(M)\to\pi_1(SO(N))=\mathbb{Z}_2$ is trivial, if $p_1(L)_{S^1}+3p_1(V)_{S^1}=p_1(TM)_{S^1}$ and $c_3(E,g,d)=0$, then the twisted Toeplitz operators $$\mathcal{T}_c\otimes\Theta^*(T_{\mathbf{C}}M,L_{\bf{R}}\otimes\mathbf{C})\otimes Q(V_{\mathbf{C}})\otimes(Q_\lambda(E),g^{Q_\lambda(E)}),~\lambda=1,2,3$$ are rigid.
 Where $Q(V_{\mathbf{C}})=Q_1(V_{\mathbf{C}})\otimes Q_2(V_{\mathbf{C}})\otimes Q_3(V_{\mathbf{C}})$.
\end{thm}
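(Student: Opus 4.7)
The plan is to combine Liu's modular-invariance machinery used for Theorem~3.11 (the tensor-product case $Q(V_{\mathbf{C}})=Q_1\otimes Q_2\otimes Q_3$) with the Toeplitz version of Han--Yu's argument already carried out in the proofs of Theorems~4.1 and~4.6. First, applying the Lefschetz fixed-point formula of \cite{Wa} together with (4.11)--(4.12) exactly as in Propositions~4.2 and~4.7, I obtain, for each $\lambda\in\{1,2,3\}$, a Lefschetz number $\widetilde{\mathcal{F}}_\lambda(t,\tau)$ whose only $\lambda$-dependence is the single odd-form factor ${\rm ch}(Q_\lambda(E),g^{Q_\lambda(E)},d,\tau)$; the $V$-contribution is the triple product $\prod_{\mu=1}^{3}\prod_{\nu,h_\nu}\theta_\mu(z_\nu^{h_\nu}+n_\nu t,\tau)/\theta_\mu(0,\tau)$ coming from $Q(V_{\mathbf{C}})$, and the $(TM,L)$-contribution is the factor from \eqref{eq:4.19}--\eqref{eq:4.21} in case~(i) and from \eqref{eq:4.34}--\eqref{eq:4.36} in case~(ii).

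Second, I would prove a double quasi-periodicity statement analogous to Lemmas~4.3 and~4.8. Invariance under $t\mapsto t+2$ is immediate from $\theta_\mu(t+2,\tau)=\theta_\mu(t,\tau)$ for $\mu=0,1,2,3$. Under $t\mapsto t+2\tau$, each theta-factor contributes an exponential $e^{-2\pi\mathbf{i}a(\cdot)}$; the total exponent collapses to zero precisely because of $3p_1(L)_{S^1}+3p_1(V)_{S^1}=p_1(TM)_{S^1}$ in case~(i) (respectively $p_1(L)_{S^1}+3p_1(V)_{S^1}=p_1(TM)_{S^1}$ in case~(ii)). The coefficient $3$ in front of $p_1(V)_{S^1}$ is exactly the multiplicity with which the $V$-Chern roots now appear, once inside each of $\theta_1,\theta_2,\theta_3$; the coefficient in front of $p_1(L)_{S^1}$ likewise records the number of theta-factors that involve $u+\sigma t$ in the $(TM,L)$-piece.

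Third, I would establish the $SL_2(\mathbb{Z})$ transformation laws analogous to Lemmas~4.4 and~4.9. Under $S$, the classical permutation $\theta_1\leftrightarrow\theta_2,~\theta_3\to\theta_3$ preserves the triple product $\theta_1\theta_2\theta_3$ in the $V$-contribution (and in the $(TM,L)$-piece in case~(i)), so only the $E$-factor shifts index according to Proposition~2.2 of \cite{HY}, which applies because $c_3(E,g,d)=0$; this gives $\lambda=1\leftrightarrow 2$ and $\lambda=3\to 3$. Under $T$ one gets $\lambda=2\leftrightarrow 3$ and $\lambda=1\to 1$. All the exponential prefactors $e^{\pi\mathbf{i}\tau(\cdot)^2}$ produced by $S$ on the individual theta-factors cancel via the same Pontryagin-class identity as in the quasi-periodicity check, leaving only a global weight $\tau^{2k}$ and overall numerical constants. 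Hence the triple $\{\widetilde{\mathcal{F}}_1,\widetilde{\mathcal{F}}_2,\widetilde{\mathcal{F}}_3\}$ is permuted by $SL_2(\mathbb{Z})$ up to constants and a power of $\tau$.

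Finally, combining this with holomorphy of each $\widetilde{\mathcal{F}}_\lambda$ on $\mathbf{R}\times\mathbf{H}$ (proved as for Lemmas~4.5 and~4.10, following the template of the corresponding lemma in \cite{Li2}), I run the polar-divisor contradiction used in the proofs of Theorems~4.1 and~4.6: any pole must be of the form $t=\frac{k}{l}(c\tau+d)$ with $(c,d)=1$; completing $(c,d)$ to a matrix in $SL_2(\mathbb{Z})$ and applying its modular action sends this into a pole at $t=k/l\in\mathbf{R}$ of a function still belonging to the triple $\{\widetilde{\mathcal{F}}_1,\widetilde{\mathcal{F}}_2,\widetilde{\mathcal{F}}_3\}$ up to constants and $\tau^{2k}$, contradicting holomorphy on $\mathbf{R}\times\mathbf{H}$. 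The main technical obstacle is the bookkeeping in step three: one must verify that after collecting every $e^{\pi\mathbf{i}\tau(\cdot)^2}$ prefactor produced by $S$ on every theta-factor (over the $TM$-, $L$- and $V$-roots), the resulting exponent vanishes modulo the equivariant identity. Here the coefficient $3$ on $p_1(V)$ is indispensable because $V$-roots enter three theta-factors; analogously, the difference between $3p_1(L)$ in case~(i) and $p_1(L)$ in case~(ii) reflects that the $(TM,L)$-factor of \eqref{eq:4.19} contains $\theta_1\theta_2\theta_3$ evaluated at $u+\sigma t$, whereas that of \eqref{eq:4.34} contains only the single $\theta$.
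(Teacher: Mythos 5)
Your proposal is correct and follows essentially the same route the paper intends: the paper does not write out a separate argument for this theorem but merely points to Liu's modular-invariance method and Han--Yu's Toeplitz extension, i.e.\ the machinery already spelled out in the proofs of the preceding theorems in Section 4 (Lefschetz number, double quasi-periodicity, $S$- and $T$-transformation laws, then the polar-divisor contradiction). You fill in exactly the details the paper suppresses: that the triple product $Q(V_{\mathbf{C}})=Q_1\otimes Q_2\otimes Q_3$ makes the $V$-contribution $\prod_{\mu=1}^{3}\theta_\mu(z_\nu^{h_\nu}+n_\nu t,\tau)/\theta_\mu(0,\tau)$ invariant under the $\theta_1\leftrightarrow\theta_2$, $\theta_2\leftrightarrow\theta_3$ permutations induced by $S$ and $T$ (so only the $E$-factor shifts index via Proposition~2.2 of Han--Yu, which uses $c_3(E,g,d)=0$), and that the coefficient $3$ on $p_1(V)_{S^1}$, and $3$ versus $1$ on $p_1(L)_{S^1}$ between cases (i) and (ii), are precisely the multiplicities with which the $V$- and $L$-Chern roots enter the theta-factors when collecting the $e^{\pi\mathbf{i}\tau(\cdot)^2}$ prefactors, exactly as in the paper's Lemma~3.3 argument. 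No gaps.
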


Similarly, consider the virtual vector bundle
$$Q(E)=Q_1(E_1)\otimes Q_2(E_2)\otimes Q_3(E_3).$$
Then we have
\begin{equation}
{\rm ch}(Q(E),g^{Q(E)},d,\tau)=-\frac{2^{\frac{N}{2}}}{8\pi^2}\int^1_0{\rm Tr}\left[g^{-1}dg\left(A\right)\right]du,
\end{equation}
with
$$A=\frac{\theta'_1(R_{\bar{u}}/(4\pi^2),\tau)}{\theta_1(R_{\bar{u}}/(4\pi^2),\tau)}
+\frac{\theta'_2(R_{\bar{u}}/(4\pi^2),\tau)}{\theta_2(R_{\bar{u}}/(4\pi^2),\tau)}+
\frac{\theta'_3(R_{\bar{u}}/(4\pi^2),\tau)}{\theta_3(R_{\bar{u}}/(4\pi^2),\tau)}.$$ 

Using Liu's method \cite{Li2} and Han and Yu's method \cite{HY}, we obtain the following theorem in the odd-dimensional case.
\begin{thm}
 (i) For a $(4k-1)$-dimensional connected spin$^c$ manifold with a non-trivial $S^1$-action, and assuming $g_*:\pi_1(M)\to\pi_1(SO(N))=\mathbb{Z}_2$ is trivial,\\
  1) if $3p_1(L)_{S^1}+p_1(V)_{S^1}=p_1(TM)_{S^1}$ and $c_3(E,g,d)=0$, then the twisted Toeplitz operators 
  $$\mathcal{T}_c\otimes\Theta(T_{\mathbf{C}}M,L_{\bf{R}}\otimes\mathbf{C})\otimes Q_\lambda(V_{\mathbf{C}})\otimes(Q(E),g^{Q(E)}),~\lambda=1,2,3$$ are rigid;\\
  2) if $3p_1(L)_{S^1}+3p_1(V)_{S^1}=p_1(TM)_{S^1}$ and $c_3(E,g,d)=0$, then the twisted Toeplitz operator 
  $$\mathcal{T}_c\otimes\Theta(T_{\mathbf{C}}M,L_{\bf{R}}\otimes\mathbf{C})\otimes Q(V_{\mathbf{C}})\otimes(Q(E),g^{Q(E)})$$ is rigid.
 
 (ii) For a $(4k+1)$-dimensional connected spin$^c$ manifold with a non-trivial $S^1$-action, and assuming $g_*:\pi_1(M)\to\pi_1(SO(N))=\mathbb{Z}_2$ is trivial,\\
  1) if $p_1(L)_{S^1}+p_1(V)_{S^1}=p_1(TM)_{S^1}$ and $c_3(E,g,d)=0$, then the twisted Toeplitz operators 
  $$\mathcal{T}_c\otimes\Theta^*(T_{\mathbf{C}}M,L_{\bf{R}}\otimes\mathbf{C})\otimes Q_\lambda(V_{\mathbf{C}})\otimes(Q(E),g^{Q(E)}),~\lambda=1,2,3$$ are rigid;\\
  2) if $p_1(L)_{S^1}+3p_1(V)_{S^1}=p_1(TM)_{S^1}$ and $c_3(E,g,d)=0$, then the twisted Toeplitz operator $\mathcal{T}_c\otimes\Theta^*(T_{\mathbf{C}}M,L_{\bf{R}}\otimes\mathbf{C})\otimes Q(V_{\mathbf{C}})\otimes(Q(E),g^{Q(E)})$ is rigid.
\end{thm}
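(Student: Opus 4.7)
The plan is to prove the four sub-cases of the theorem uniformly, via the modular invariance scheme used in the proofs of Theorems 4.1 and 4.6 (for the $Q_\lambda(V_\mathbf{C})$-parts) and Theorem 4.11 (for the $Q(V_\mathbf{C})$-parts), the single genuinely new ingredient being the replacement of the factor ${\rm ch}(Q_\lambda(E),g^{Q_\lambda(E)},d,\tau)$ by the symmetric factor ${\rm ch}(Q(E),g^{Q(E)},d,\tau)$. The four sub-cases differ only in the Pontrjagin class identity assumed and in whether the $V$-factor is $Q_\lambda(V_\mathbf{C})$ or $Q(V_\mathbf{C})$. The first step is to apply the Lefschetz fixed point formula (cf.\ \cite{Wa}) exactly as in Propositions 4.2 and 4.7 to obtain Lefschetz numbers $\overline{\mathcal{G}}_\lambda(g;\tau)$ (cases (i.1), (ii.1)) and a single $\overline{\mathcal{G}}(g;\tau)$ (cases (i.2), (ii.2)); these agree with the corresponding expressions there except that the tail is the $\lambda$-independent factor ${\rm ch}(Q(E),g^{Q(E)},d,\tau)$. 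Extending in $t$ yields functions $\mathcal{G}_\lambda(t,\tau)$ (resp.\ $\mathcal{G}(t,\tau)$) on $\mathbf{C}\times\mathbf{H}$, and the required double $t$-periodicity in $2\mathbb{Z}$ and $2\mathbb{Z}\tau$ follows verbatim from Lemma 3.3 (under the relevant Pontrjagin identity), since the new factor is $t$-independent.

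The heart of the argument is the behaviour under the generators $S$ and $T$ of $SL_2(\mathbf{Z})$. Since the $S$-action permutes $\{\theta_1,\theta_2\}$ and fixes $\theta_3$, while $T$ permutes $\{\theta_2,\theta_3\}$ and fixes $\theta_1$, the sum $A=\sum_{j=1}^{3}\theta'_j/\theta_j$ appearing in the integrand of ${\rm ch}(Q(E),g^{Q(E)},d,\tau)$ is invariant under both of these permutations. Combining this symmetry with the hypothesis $c_3(E,g,d)=0$, which as in Proposition 2.2 of \cite{HY} kills the $t$-linear correction generated under $S$ by the common phase in the theta transformation laws (this time summed over all three $j$), I expect to obtain
\begin{equation*}
  \left\{{\rm ch}\!\left(Q(E),g^{Q(E)},d,-\tfrac{1}{\tau}\right)\right\}^{4i-1}
 =\left\{\tau^{2i}\,{\rm ch}\!\left(Q(E),g^{Q(E)},d,\tau\right)\right\}^{4i-1}
\end{equation*}
together with the $T$-invariance of ${\rm ch}(Q(E),g^{Q(E)},d,\tau)$. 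Combined with Lemmas 3.4 and 4.4 for the remaining factors, this gives the required modular transformation laws: in cases (i.1), (ii.1) the three $\mathcal{G}_\lambda$ permute under $S$ and $T$ in exactly the pattern of the $\mathcal{F}_\lambda$ of Lemma 4.4 (the common symmetric $Q(E)$-factor contributing the identical scalar in all three indices), while in cases (i.2), (ii.2) the single function $\mathcal{G}$ is invariant under both $S$ and $T$ up to the standard weight $\tau^{2k}$.

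The final steps follow the template of Theorems 4.1 and 4.6 verbatim: holomorphicity on $\mathbf{R}\times\mathbf{H}$ of the modular transforms follows from the argument of Lemma 4.5, and the pole-removal step shows that any polar divisor $t=(k/l)(c\tau+d)$ of $\mathcal{G}_\lambda$ (resp.\ $\mathcal{G}$) would, after the $SL_2(\mathbf{Z})$ transformation $\begin{pmatrix}d&-b\\-c&a\end{pmatrix}$, yield a polar divisor at $t=k/l\in\mathbf{R}$ of another function in the same finite family, contradicting that holomorphicity. Consequently the $\mathcal{G}_\lambda$ (resp.\ $\mathcal{G}$) are holomorphic on $\mathbf{C}\times\mathbf{H}$, and double periodicity then forces them to be constant in $t$, which is rigidity. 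The main obstacle is the justification of the displayed $S$-transformation of ${\rm ch}(Q(E),g^{Q(E)},d,\tau)$: Proposition 2.2 of \cite{HY} was designed for a single $Q_j(E)$, and one has to verify that the $c_3(E,g,d)=0$ cancellation is still sufficient when the three logarithmic derivatives are summed inside the integrand, i.e.\ that the sum does not generate surviving cross-terms requiring a hypothesis stronger than the vanishing of the degree-$3$ odd Chern class.
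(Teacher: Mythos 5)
Your proposal is correct and matches the paper's intended (but unstated) approach: the paper merely asserts this theorem as following ``using Liu's method and Han and Yu's method,'' without supplying the argument, and your write-up correctly fills in exactly the details one would expect. In particular, you correctly identify that the single new ingredient is the $S$- and $T$-behaviour of ${\rm ch}(Q(E),g^{Q(E)},d,\tau)$, and the permutation-symmetry of $A=\sum_{j=1}^{3}\theta'_j/\theta_j$ gives its modular invariance up to the weight $\tau^{2i}$ once the degree-$3$ piece is removed.

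On the obstacle you flag at the end: there are no surviving cross-terms, and no hypothesis beyond $c_3(E,g,d)=0$ is needed. Since $A$ enters linearly inside a single trace, the $S$-transformation of each $\theta_j$ contributes the same additive correction, namely $\theta'_j(x,-1/\tau)/\theta_j(x,-1/\tau)=2\pi\mathbf{i}\tau x+\tau\,\theta'_{\sigma(j)}(\tau x,\tau)/\theta_{\sigma(j)}(\tau x,\tau)$ with $\sigma$ the relevant permutation. Summing over $j=1,2,3$ yields $6\pi\mathbf{i}\tau x+\tau A(\tau x,\tau)$ — a pure sum, never a product across different $j$ — so the only obstruction is $\int_0^1{\rm Tr}\bigl[g^{-1}dg\cdot 6\pi\mathbf{i}\tau\,R_{\bar u}/(4\pi^2)\bigr]d\bar u$, which is a scalar multiple of ${\rm Tr}[(g^{-1}dg)^3]$, hence of $c_3(E,g,d)$, and so vanishes. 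The degree-$(4i-1)$ component of $\tau A(\tau x,\tau)$ then produces exactly $\tau^{2i}$ as you claim, and the rest of your argument (periodicity, the permutation pattern of $\{\mathcal{G}_\lambda\}$ inherited from Lemmas 3.4/3.9 and 4.4/4.9, holomorphicity on $\mathbf{R}\times\mathbf{H}$, and the pole-transporting step) goes through verbatim.
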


\section{Acknowledgements}

 This work was supported by the National Key Research and Development Program of China (NKPs) [Grant Number 2024YFA1013201], the Natural Science Foundation of Chongqing (NSFCQ) [Grant Number CSTB2024NSCQ-LZX0040], the Special Project of Chongqing Municipal Science and Technology Bureau [Grant Number 2025CCZ015], and supported in part by NSFC No.11771070. The third author is the corresponding author of this article. The authors also thank the referee for his (or her) careful reading and helpful comments.

\vskip 1 true cm

\section{Data availability}

No data was gathered for this article.

\section{Conflict of interest}

The authors have no relevant financial or non-financial interests to disclose.

\vskip 1 true cm

\bigskip
\bigskip
\indent{J. Guan}\\
 \indent{School of Mathematics and Statistics,
Northeast Normal University, Changchun Jilin, 130024, China }\\
\indent E-mail: {\it guanjy@nenu.edu.cn }\\
\indent{K. Liu}\\
 \indent{Mathematical Science Research Center,Chongqing University of Technology, Chongqing, 400054, China }\\
\indent E-mail: {\it kefengliu@cqut.edu.cn }\\
\indent{Y. Wang}\\
 \indent{School of Mathematics and Statistics,
Northeast Normal University, Changchun Jilin, 130024, China }\\
\indent E-mail: {\it wangy581@nenu.edu.cn }\\

\end{document}